\setlist[enumerate,1]{label=(\alph*), ref=(\alph*)}
\newtheorem{Lemma}{Lemma}[section]
\newtheorem{Corollary}[Lemma]{Corollary}
\newtheorem{Theorem}[Lemma]{Theorem}
\theoremstyle{definition}
\newtheorem{Definition}[Lemma]{Definition}
\theoremstyle{remark}
\newtheorem{Remark}[Lemma]{Remark}
\newtheoremstyle{citing}
{3pt}
{3pt}
{\itshape}
{}
{\bfseries}
{.}
{.5em}
{\thmnote{#3}}
\theoremstyle{citing}
\newtheoremstyle{proof*}
{3pt}
{3pt}
{\rmfamily}
{}
{ \itshape}
{.}
{.5em}
{\thmnote{#3}}
\theoremstyle{proof*}
\newtheorem*{proof*}{}
\newcommand{\rhoF}[2]{\boldsymbol{\rho}^{#1}_{#2}}
\newcommand{\arho}[2]{\rhoF{#2}{#1}}
\newcommand{\arhol}[2]{\underline{\arho{#1}{#2}}}
\newcommand{\ar}[2]{\boldsymbol{r}_{#1}^{#2}}
\newcommand{\arl}[2]{\underline{\ar{#1}{#2}}}
\newcommand{\project}[1]{{#1}_\natural}
\newcommand{\perpproject}[1]{#1_\natural^\perp}
\newcommand{\measureball}[2]{{#1}\,{#2}}
\newcommand{\adim}{n}
\newcommand{\vdim}{d}
\newcommand{\npp}[2]{\boldsymbol{\xi}_{#2}^{#1}}
\newcommand{\anp}[2]{\npp{#2}{#1}}
\newcommand{\distF}[2]{\boldsymbol{\delta}_{#2}^{#1}}
\newcommand{\da}[2]{\distF{#2}{#1}}
\newcommand{\Unp}[1]{\mathrm{Unp}^{#1}}
\newcommand{\cballF}[3]{\mathbf{B}^{#1}(#2,#3)}
\newcommand{\oballF}[3]{\mathbf{U}^{#1}(#2,#3)}
\newcommand{\oball}[2]{\mathbf{U}(#1,#2)}
\newcommand{\cball}[2]{\mathbf{B}(#1,#2)}
\newcommand{\sphere}[1]{\mathbf{S}^{#1}}
\newcommand{\integers}{\mathbf{Z}}
\newcommand{\natp}{\integers_+}
\newcommand{\Q}{\mathbf{Q}}
\newcommand{\R}{\mathbf{R}}
\newcommand{\Real}[1]{\R^{#1}}
\newcommand{\Haus}[1]{ \mathscr{H}^{#1} }
\newcommand{\Leb}[1]{ \mathscr{L}^{#1}
}
\newcommand{\LM}{\mathscr{L}}
\newcommand{\HM}{\mathscr{H}}
\newcommand{\density}{\boldsymbol{\Theta}}
\newcommand{\unitmeasure}[1]{\boldsymbol{\alpha}(#1)}
\newcommand{\restrict}{ \mathop{ \rule[1pt]{.5pt}{6pt} \rule[1pt]{4pt}{0.5pt}}\nolimits }
\newcommand{\ud}{\ensuremath{\,\mathrm{d}}}
\newcommand{\uD}{\ensuremath{\mathrm{D}}}
\newcommand{\id}[1]{\mathbf{I}_{#1}}
\newcommand{\lIm}{[}
\newcommand{\rIm}{]}
\newcommand{\scale}[1]{\boldsymbol{\mu}_{#1}}
\newcommand{\trans}[1]{\boldsymbol{\tau}_{#1}}
\newcommand{\Clos}[1]{\mathop{\mathrm{Clos}}#1}
\DeclareMathOperator{\Hom}{Hom}
\newcommand{\Bdry}{\partial}
\DeclareMathOperator{\ap}{ap}
\DeclareMathOperator{\lin}{span}
\newcommand{\cnt}[1]{\mathscr{C}^{#1}}
\newcommand{\powerset}[1]{\mathbf{2}^{#1}}
\DeclareMathOperator{\Int}{Int}
\DeclareMathOperator{\dmn}{dmn}
\DeclareMathOperator{\grad}{grad}
\newcommand{\subgr}{\nabla}
\DeclareMathOperator{\Tan}{Tan}
\DeclareMathOperator{\Nor}{Nor}
\DeclareMathOperator{\without}{\sim}
\DeclareMathOperator{\im}{im}
 \DeclareMathOperator{\Cut}{Cut}
\newcommand{\Der}{\ensuremath{\mathrm{D}}}
\DeclareMathOperator{\pt}{pt}
\newcommand{\an}[2]{\bm{\nu}_{#1}^{#2}}
\title{Regularity of the distance function from arbitrary closed sets}
\author{S{\l}awomir Kolasi{\'n}ski \and Mario Santilli\footnote{Corresponding author.}}
\begin{document}

\maketitle

\begin{abstract}
    We investigate the distance function $\da{K}{\phi}$ from an arbitrary closed
    subset $ K $ of a~finite-dimensional Banach space $ (\R^{\adim}, \phi) $,
    equipped with a uniformly convex $ \cnt{2} $-norm $ \phi $. These spaces are
    known as \emph{Minkowski spaces} and they are one of the fundamental spaces
    of Finslerian geometry (see \cite{Martini2001}).  We prove that the gradient
    of~$\da{K}{\phi}$ satisfies a Lipschitz property on the complement of the
    $\phi$-cut-locus of~$K$ (a.k.a. the medial axis of $\R^{\adim} \without K$)
    and we prove a~structural result for the~set of~points outside~$K$ where
    $\da{K}{\phi}$ is pointwise twice differentiable, providing an answer to
    a~question raised by Hiriart-Urruty in~\cite{MR1539982}. Our~results give
    sharp generalisations of some classical results in the~theory of~distance
    functions and they are motivated by critical low-regularity examples for
    which the available results gives no~meaningful or very restricted
    informations.
    
    The results of this paper find natural applications in the theory of partial differential
    equations and in convex geometry. 
    
\end{abstract}

\section{Introduction}

For the basic notation we refer the reader to section~\ref{sec:notation}.

Suppose $ K \subseteq \Real{\adim} $ is a closed set and $\phi$ is a uniformly
convex norm on~$\Real{\adim}$; cf.~\ref{def:uniformly-convex-norm}. Our central
object of study is the \emph{$\phi$-distance function}
\begin{equation}
    \label{eq:def-delta}
    \da{K}{\phi}(x) = \inf \bigl\{ \phi(y-x) : y \in K \bigr\} \quad \text{for $x \in \Real{\adim}$} \,.
\end{equation}
We investigate in detail the set of points where $\da{K}{\phi}$ is not
differentiable and then also the set of points where it is not pointwise twice
differentiable.  Define
\begin{equation}
    \label{eq:def-Sigma}
    \Sigma^{\phi}(K) = (\R^{\adim} \without K) \cap
    \bigl\{
    x : \textrm{$\da{K}{\phi}$ is not differentiable at $x$}
    \bigr\} \,.
\end{equation}
A basic and fundamental result in the theory of distance functions asserts what follows.
\begin{Theorem}[$\mathcal{C}^{1,1}$-regularity]
    \label{Lions}
    If $ K \subseteq \R^{\adim} $ is an arbitrary closed set, then
    $ \da{K}{\phi} $ is $ \mathcal{C}^{1} $ with a locally Lipschitz gradient on
    the open subset
    $ U := \R^{\adim} \without \big(K \cup \Clos \Sigma^{\phi}(K)\big) $.
\end{Theorem}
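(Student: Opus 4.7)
The plan is to prove, for every $x \in U$, a two-sided quadratic Taylor estimate
\[
    \bigl| \da{K}{\phi}(z) - \da{K}{\phi}(x) - \langle \nabla \da{K}{\phi}(x), z-x\rangle \bigr| \leq C\,|z-x|^2
\]
for $z$ in a neighbourhood of $x$, with $C$ uniform on compact subsets of $U$. Once this is in place, a standard test-direction argument recovers the Lipschitz bound: writing the estimate at two nearby points $x, y$ and evaluating at $z = y + \lambda e$ with $|\lambda| \sim |x-y|$ and $e$ an arbitrary unit vector, the cross terms cancel and one reads off $|\nabla \da{K}{\phi}(x) - \nabla \da{K}{\phi}(y)| \leq C'|x-y|$.

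For every $x \in U$ the nearest-point set is a singleton $\{\anp{K}{\phi}(x)\}$, since two distinct minimisers would produce a nontrivial subdifferential contradicting the differentiability of $\da{K}{\phi}$ at $x$. The $1$-homogeneity of $\phi$ together with the extremality condition forces $\nabla \da{K}{\phi}(x) = \Der\phi(x - \anp{K}{\phi}(x))$, and the upper Taylor bound is immediate: the inequality $\da{K}{\phi}(z) \leq \phi(z - \anp{K}{\phi}(x))$ saturates at $z = x$, and since $\phi \in \cnt{2}(\Real{\adim} \without \{0\})$ and $\da{K}{\phi}$ is bounded below on compact subsets of $U$, the Hessian of $\phi(\cdot - \anp{K}{\phi}(x))$ is uniformly controlled in a neighbourhood of $x$, yielding the upper half of the displayed estimate. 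The continuity of the footpoint map $\anp{K}{\phi}$ then already shows that $\da{K}{\phi} \in \cnt{1}(U)$.

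The matching lower bound is where the hypothesis $x \notin \Clos \Sigma^{\phi}(K)$ enters. Setting $\nu(x) = (x - \anp{K}{\phi}(x))/\phi(x - \anp{K}{\phi}(x))$, I introduce the forward extension $x_t = x + t\,\nu(x)$ and the extension time $T(x) = \inf\bigl\{ t > 0 : x_t \text{ has more than one nearest point in } K \bigr\} \in (0, +\infty]$. A~short argument based on the triangle inequality for $\phi$ and the upper semicontinuity of the nearest-point set-valued map shows that for $s \in [0, T(x))$ one has $\anp{K}{\phi}(x_s) = \anp{K}{\phi}(x)$ and $\da{K}{\phi}(x_s) = \da{K}{\phi}(x) + s$, while $x_{T(x)} \in \Sigma^{\phi}(K)$ whenever $T(x) < \infty$. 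Using the equivalence between $\phi$ and the Euclidean norm, this qualitative fact upgrades to the quantitative lower bound $T(x) \gtrsim \dist(x, \Clos \Sigma^{\phi}(K))$, hence $T(x) \geq R_{0} > 0$ uniformly for $x$ in some neighbourhood of any fixed $x_{0} \in U$.

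Given such an extension, the $1$-Lipschitz bound of $\da{K}{\phi}$ with respect to $\phi$ applied at $x_{R_{0}}$ yields
\[
    \da{K}{\phi}(z) \geq \da{K}{\phi}(x_{R_{0}}) - \phi(z - x_{R_{0}}) = \da{K}{\phi}(x) + R_{0} - \phi(z - x_{R_{0}}),
\]
and a second-order Taylor expansion of $\phi$ at the nonzero vector $-R_{0}\,\nu(x)$, together with the identity $\Der\phi(-v) = -\Der\phi(v)$, produces the matching quadratic lower bound with constant depending only on $R_{0}$ and on bounds for $\Der^{2}\phi$ on the $\phi$-unit sphere. The principal obstacle is the extension lemma: one has to verify that the extension fails \emph{exactly} at the closure of the medial axis and to convert this into the quantitative bound on $T(x)$. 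The $\cnt{2}$ and uniform-convexity hypotheses on~$\phi$ are used precisely to make $\nu(x)$ well-defined and continuous and to ensure that the footpoint is single-valued throughout $U$.
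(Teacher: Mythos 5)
Your overall architecture is sound and is the classical ``two-sided smooth touching'' argument: squeeze $\da{K}{\phi}$ at each $x\in U$ between $z\mapsto\phi(z-\anp{K}{\phi}(x))$ from above and $z\mapsto\da{K}{\phi}(x)+R_0-\phi(z-x_{R_0})$ from below, both $\cnt{2}$ near $x$ with uniformly bounded Hessians, and then run the standard test-direction argument. This differs from the paper, which does not prove Theorem~\ref{Lions} directly but either invokes the Hamilton--Jacobi/semiconcavity theory or obtains it as a special case of Theorem~\ref{intro_Lipschitz}. The upper bound, the identity $\grad\da{K}{\phi}(x)=\grad\phi(x-\anp{K}{\phi}(x))$, and the final deduction are all fine.

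The gap is in the extension lemma. You define $T(x)$ as the first time the nearest-point set along the ray becomes multivalued and assert that ``a short argument based on the triangle inequality and upper semicontinuity'' gives $\da{K}{\phi}(x_s)=\da{K}{\phi}(x)+s$ for all $s<T(x)$. This implication is false: single-valuedness of $\anp{K}{\phi}$ along the ray does not propagate the equality $\da{K}{\phi}(x_s)=\da{K}{\phi}(x)+s$ past a focal point. For instance, for $K=\{(u,u^2+cu^5):|u|\le\eta\}$ with small $c,\eta>0$ and the Euclidean norm, the ray from the origin through $(0,\tfrac14)$ reaches the focal point $(0,\tfrac12)$, where the footpoint is still the unique point $(0,0)$; for small $\varepsilon>0$ the point $(0,\tfrac12+\varepsilon)$ again has a \emph{unique} footpoint (near $u\approx\sqrt\varepsilon$), yet $\da{K}{}((0,\tfrac12+\varepsilon))<\tfrac12+\varepsilon$. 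So $T(x)$ exceeds the extension time and your claimed identity fails on part of $[0,T(x))$. What your lower bound actually requires is that the genuine extension time $T'(x)=\sup\{t:\da{K}{\phi}(x+t\nu(x))=\da{K}{\phi}(x)+t\}$ be bounded below by $c\,\dist(x,\Clos\Sigma^{\phi}(K))$, and since $x+T'(x)\nu(x)\in\Cut^{\phi}(K)$ by definition, this is exactly the inclusion $\Cut^{\phi}(K)\subseteq\Clos{\Sigma^{\phi}(K)}$ --- i.e.\ that a point past which the ray cannot be extended is a limit of multivalued points, typically approached from the side rather than along the ray. That inclusion is a genuine theorem (Fremlin's result, recorded as Remark~\ref{remark:inclusions} in the paper), not a consequence of the triangle inequality and upper semicontinuity of $\anp{K}{\phi}$. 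Your proof becomes correct once you replace $T(x)$ by $T'(x)$ and cite or prove this inclusion; as written, the key step is unsupported. (A minor related slip: even with your definition, the limit point $x_{T(x)}$ lies in $\Clos{\Sigma^{\phi}(K)}$, not necessarily in $\Sigma^{\phi}(K)$.)
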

\noindent This result can be deduced employing general results from the theory
of Hamilton-Jacobi equations (see~\cite[Theorem 15.1]{MR667669}
or~\cite{MR2060597}). Indeed, for a general closed set $ K $ it is well known
that $\da{K}{\phi} $ is a locally semiconcave function on $ \R^{\adim} \without
K $ and it satisfies, in a viscosity sense, the Eikonal equation $
\phi^\ast(\grad u) =1 $ on $ \R^{\adim} \without K $ (where $\phi^\ast$ is the
dual norm of $\phi $ as defined in~\ref{def:dual-norm}); see \cite{MR0310150},
\cite{MR667669}, \cite{MR699027}. For the Euclidean norm Theorem \ref{Lions} can
also be obtained using a purely geometric argument (see
\cite[4.8]{Federer1959}).

Of course, the conclusion of the theorem can be improved if we know that $ K $
is at least a $ \mathcal{C}^2 $-submanifold. In fact, in this case
$ \da{K}{\phi} $ is at least of class~$\cnt{2}$ on the open subset $ U $
and $\Clos \Sigma^{\phi}(K)$ is a set of $ \Leb{\adim} $-measure zero; if $ K $ is
a~$ \mathcal{C}^{2,1} $-submanifold, then $\Clos \Sigma^{\phi}(K)$ is a set of
locally finite $ \Haus{\adim-1} $-measure; see \cite{MR1695025}, \cite{MR1941909},
\cite{MR2094267}, \cite{MR2305073}, and \cite{MR2336304}. A~sufficient condition
that guarantees $ \Leb{\adim}(\Clos \Sigma(K)) =0 $ for closed
$ \mathcal{C}^{1,1} $-hypersurfaces $ K $ in terms of the inner radius of
curvature is given in \cite[Theorem 4.1]{MR3568029}. Moreover, if $ K $ is a
closed $ \mathcal{C}^1 $-hypersurface, then \cite[Theorem 1.3]{MR3568029}
provides a necessary and sufficient condition for a point
$ x \in \Real{\adim} \without K $ to lie in $ \Real{\adim} \without \Clos(\Sigma(K)) $.

On the other hand, it turns out that the $ \mathcal{C}^2 $-regularity is a
critical hypothesis; indeed the second named author has shown,
in~\cite{MR4279967}, that for a~convex open subset $ \Omega $ with $
\mathcal{C}^{1,1} $-boundary the set $ \Clos \Sigma^{\phi}(\R^{\adim} \without
\Omega) $ might have non empty interior in $ \Omega $; moreover, for a typical
(in the sense of Baire Category) convex open subset $ \Omega $ with $
\mathcal{C}^1 $-boundary we have that $\Sigma^{\phi}(\R^{\adim} \without \Omega)
$ is dense in $ \Omega $. There exist even closed $\mathcal{C}^{1, \alpha}
$-hypersurfaces $K$ such that $ \Sigma^{\phi}(K) $ is dense in all of $
\R^{\adim} $; see \cite[Corollary 2.9]{MR4279967}. In all these examples one can
choose $ \phi $ to be the Euclidean norm. Therefore, the set $ U $ defined
in~\ref{Lions} might easily be empty even if $ K = \Real{\adim} \without \Omega
$ and $ \Omega $ is a convex open subset with $ \mathcal{C}^{1} $ boundary, or
might reduce to a small tubular neighbourhood around $ K $ if $ \Omega $ has a $
\mathcal{C}^{1,1} $ boundary. Consequently Theorem~\ref{Lions} provides no (or
very limited) information in these situations. On the other hand it is well
known that the gradient of $ \da{K}{\phi} $ is a continuous map on its domain $
\Real{\adim} \without (K \cup \Sigma^{\phi}(K)) $. Therefore, it is a natural to
ask for a characterisation of the largest set on which the gradient of $
\da{K}{\phi} $ satisfies a~Lipschitz condition. We identify that set in
Theorem~\ref{intro_Lipschitz}, providing an effective sharp generalization of
Theorem~\ref{Lions} that is applicable in the aforementioned critical
low-regularity cases.
 
Besides its central role in Theorem \ref{Lions}, the set $ \Sigma^{\phi}(K) $
has been extensively studied in the last decades. Indeed, if we define the
\emph{$\phi $-nearest point projection} $\anp{K}{\phi}$ to be the multivalued
function (see~\ref{def:multivalued} and~\ref{def:multi-diff}) mapping a point $x
\in \Real{\adim}$ into the set
\begin{equation}
    \label{eq:def-xi}
    \anp{K}{\phi}(x) = K \cap \bigl\{a : \phi(x-a) = \da{K}{\phi}(x) \bigr\}, \,
\end{equation}
then it is well known that $ \Sigma^{\phi}(K) $ is precisely the set of points $
x \in \Real{\adim} \sim K $ where $\anp{K}{\phi}(x) $ is not a singleton. It
is remarkable that $ \Sigma^{\phi}(K) $ can be always covered by countably many
$ \mathcal{C}^2 $-hypersurfaces (see \cite{MR536060} and~\cite{Hajlasz2022});
moreover upper bounds on its Hausdorff measure are known (see
\cite{MR1185029}). Lower bounds and results on the propagation of the
non-differentiability points can be obtained from \cite{MR1760538},
\cite{MR1892229} and \cite{MR2538498}. The topological properties of the set $
\Sigma^{\phi}(K) $ in a~Euclidean or Riemannian setting are studied
in~\cite{MR1851184}, \cite{lieutier04}, \cite{MR3038120}.

Since $\da{K}{\phi}$ is locally semiconcave outside $ K $, it is a natural
question to investigate the set of points $ x \in \Real{\adim} \sim K $ where $
\da{K}{\phi} $ is pointwise twice differentiable, which means the set of points
where the function admits a second-order Taylor polynomial;
see~\ref{def:pt-diff}. Thus, we consider the set
\begin{equation}
    \label{eq:def-Sigma2}
    \Sigma^{\phi}_2(K) = (\R^{\adim} \without K)\cap \bigl\{
    x : \textrm{$ \da{K}{\phi} $ is not pointwise twice differentiable at $x$}
    \bigr\} \,.
\end{equation} 
A classical theorem on the twice differentiability of convex functions of
Alexandrov (see \cite{MR0003051}) readily implies the following result.
\begin{Theorem}\label{Alexandrov}
If $ K \subseteq \Real{\adim} $ is an arbitrary closed set, then $ \Leb{\adim}(\Sigma^{\phi}_2(K)) =0 $. 
\end{Theorem}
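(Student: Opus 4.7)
The plan is to reduce the assertion to Alexandrov's classical theorem on the twice pointwise differentiability of concave functions, exploiting the fact that $\da{K}{\phi}$ is locally semiconcave on the open set $\Real{\adim} \without K$. Once one knows that every $x_0 \in \Real{\adim} \without K$ admits an open neighbourhood $U$ and a constant $C \in (0,\infty)$ such that $y \mapsto \da{K}{\phi}(y) - \frac{C}{2}|y|^{2}$ is concave on $U$, Alexandrov's theorem \cite{MR0003051} immediately yields the existence of a second-order Taylor polynomial at $\Leb{\adim}$-almost every point of $U$, and adding back the smooth quadratic term preserves pointwise twice differentiability.

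To establish the local semiconcavity of $\da{K}{\phi}$, I would fix $x_0 \in \Real{\adim} \without K$ and choose a bounded open neighbourhood $U$ of $x_0$ with $\Clos U \cap K = \emptyset$, so that $\inf\{\phi(y-a) : y \in U,\, a \in K\} > 0$. Since $\phi$ is a uniformly convex norm of class $\cnt{2}$ on $\Real{\adim} \without \{0\}$ and its second derivative is positively homogeneous of degree~$-1$, the Hessians of the family of maps $\{y \mapsto \phi(y-a) : a \in K\}$ are uniformly bounded on $U$ by some constant $C < \infty$ depending only on $U$, $K$ and $\phi$. Consequently, each $y \mapsto \phi(y-a) - \frac{C}{2}|y|^{2}$ is concave on $U$, and taking the infimum over $a \in K$, which preserves concavity, yields the desired semiconcavity of $\da{K}{\phi}$ on $U$.

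Alexandrov's theorem applied to the concave function $\da{K}{\phi} - \frac{C}{2}|\cdot|^{2}$ then implies that $\da{K}{\phi}$ is pointwise twice differentiable outside an $\Leb{\adim}$-null subset of $U$. Covering the open set $\Real{\adim} \without K$ by countably many such neighbourhoods $U$ and invoking countable subadditivity of $\Leb{\adim}$ gives $\Leb{\adim}(\Sigma^{\phi}_{2}(K)) = 0$.

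The main, and in fact only, non-trivial input is the uniform Hessian bound on the family $\{\phi(\cdot - a) : a \in K\}$ over the chosen neighbourhood $U$; this crucially uses both the $\cnt{2}$-regularity of~$\phi$ and its uniform convexity, which together guarantee that $D^{2}\phi$ is well-defined and bounded on any closed subset of $\Real{\adim} \without \{0\}$ that stays at positive distance from the origin. The remainder of the argument is formal, so no further obstacle is expected.
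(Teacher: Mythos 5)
Your proposal is correct and follows essentially the same route as the paper, which derives the statement from Alexandrov's theorem combined with the local semiconcavity of $\da{K}{\phi}$ on $\Real{\adim}\without K$ (recorded in Lemma~\ref{basic_properties_of_da_and_anp}\ref{basic_properties_of_da_and_anp_2} via Zaj\'{\i}\v{c}ek); you simply supply the semiconcavity argument explicitly through the uniform Hessian bound for $\phi(\cdot-a)$, which is sound since $\uD^2\phi$ is $(-1)$-homogeneous and $|y-a|$ stays bounded below on $U$. One cosmetic remark: only the $\cnt{2}$-regularity of $\phi$ away from the origin is needed for that bound, not its uniform convexity.
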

\noindent The example in \ref{convex_example} shows that the dimension of the
set $ \Sigma^{\phi}_2(K) $ might be exactly $ n $ even if $ K $ is a closed
convex body with $ \mathcal{C}^{1,1} $-boundary. On the other hand, it is
natural to ask about the structure of $ \Sigma^{\phi}_2(K) $ for a general
closed set $ K $; however, nothing is  known in the literature.
The~problem, in the~Euclidean setting, goes back at least to \cite{MR1539982} (see
last paragraph on page~458). We remark that the set of
twice-differentiability points of the $\phi$-distance function $\da{K}{\phi}$
corresponds to the set of differentiability points of the $ \phi $-nearest point
projection $\anp{K}{\phi}$; see
\ref{basic_properties_of_da_and_anp}\ref{basic_properties_of_da_and_anp_6}. Only
if $ K $ is convex sharp results are
available, that describe the structure of $ \Sigma^\phi_2(K) $ in terms of the \emph{unit $ \phi $-normal bundle} of~$K$. This is
defined for an arbitrary closed set $ K \subseteq \R^{\adim} $ as
\begin{equation}
    \label{eq:def-N-phi}
    N^{\phi}(K) =
    \bigl\{ (a, \eta)
    : a \in K, \; \eta \in \Real{\adim}, \;
    \phi(\eta) = 1, \;
    \da{K}{\phi}(a + s \eta) = s \;
    \textrm{for some $ s > 0 $}
    \bigr\} \,.
\end{equation} 
We recall that $ N^{\phi}(K) $ is a Borel and countably $(\adim-1)$-rectifiable
subset of~$\Real{2\adim}$; cf.~\cite[Lemma~5.2]{DRKS2020ARMA}, see
\cite[3.2.14(2)]{Federer1969} for the notion of rectifiability.
\begin{Theorem}
    \label{second order diff points convex} Suppose $ K \subseteq \R^{\adim} $ is
    convex. Then there exists $ Z \subseteq N^\phi(K) $ with $
    \Haus{\adim-1}(Z) =0 $ such that
    \begin{displaymath}
        \Sigma^{\phi}_2(K) = \{ a + r \eta : 0 < r < \infty, \; (a, \eta) \in Z
        \}.
    \end{displaymath}
    In particular, for $\Haus{\adim-1}$ almost all $(a, \eta) \in N^\phi(K) $
    the distance function $\da{K}{\phi} $ is pointwise twice differentiable at
    all points of the ray $ \{ a + r \eta: 0 < r < +\infty \} $.
\end{Theorem}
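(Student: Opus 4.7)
The plan is to reduce the statement to two claims. First (ray invariance), the set $\Sigma^\phi_2(K)$ is a union of open rays of the form $\{a + r\eta : r > 0\}$ with $(a,\eta) \in N^\phi(K)$. Second (measure estimate), the collection $Z := \{(a,\eta) \in N^\phi(K) : a + r\eta \in \Sigma^\phi_2(K) \text{ for some } r > 0\}$ is $\Haus{\adim-1}$-null. Once these are in hand, the identity $\Sigma^\phi_2(K) = \{a + r\eta : r > 0, (a,\eta) \in Z\}$ and the ``in particular'' clause both drop out immediately.

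For the ray invariance, I would use the equivalence recorded in \ref{basic_properties_of_da_and_anp}\ref{basic_properties_of_da_and_anp_6} between pointwise twice differentiability of $\da{K}{\phi}$ at $x$ and classical differentiability of the (single-valued, by convexity of $K$) projection $\anp{K}{\phi}$ at $x$. On any ray $\{a + s\eta : s > 0\}$ with $(a,\eta) \in N^\phi(K)$ one has $\anp{K}{\phi}(a + s\eta) = a$ identically, so I would show that differentiability of $\anp{K}{\phi}$ at one point of the ray forces differentiability at every other point. Concretely, given two parameters $r_0, s > 0$, I would express $\anp{K}{\phi}$ in a full neighbourhood of $a + s\eta$ as a composition of $\anp{K}{\phi}$ near $a + r_0\eta$ with a smooth radial adjustment dictated by the first-order homogeneity of $\da{K}{\phi}$ along the ray and by the $\phi^\ast$-normalisation of $\grad \da{K}{\phi}$; a chain-rule/implicit-function argument then transports the differential from one point of the ray to the other.

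For the measure estimate, consider the parametrization $F : N^\phi(K) \times (0,\infty) \to \R^{\adim} \without K$ given by $F(a,\eta,r) = a + r\eta$. By convexity of $K$, $F$ is a bijection, and using the countable $(\adim-1)$-rectifiability of~$N^\phi(K)$ it admits an approximate Jacobian whose absolute value is strictly positive $(\Haus{\adim-1} \otimes \Leb{1})$-almost everywhere: in the Euclidean prototype this Jacobian has the explicit form $r^{\adim-1}\prod_i(1 + r\kappa_i)$ with nonnegative generalised principal curvatures $\kappa_i$, and an analogous formula persists in the Minkowski setting. The area formula together with Theorem~\ref{Alexandrov} then yields
\begin{displaymath}
    0 \;=\; \Leb{\adim}\bigl(\Sigma^\phi_2(K)\bigr) \;=\; \int_{F^{-1}(\Sigma^\phi_2(K))} |JF|(a,\eta,r)\,\ud(\Haus{\adim-1} \otimes \Leb{1})(a,\eta,r).
\end{displaymath}
By ray invariance $F^{-1}(\Sigma^\phi_2(K)) = Z \times (0,\infty)$, so Fubini and the positivity of $|JF|$ force $\Haus{\adim-1}(Z) = 0$.

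I expect the ray invariance to be the main obstacle. The heuristic picture of $\da{K}{\phi}$ as a viscosity solution of the Eikonal equation whose characteristics are the normal rays suggests that second-order regularity should propagate along rays, but turning this into a rigorous statement about \emph{pointwise} twice differentiability requires controlling the second-order behaviour of $\anp{K}{\phi}$ in all transverse directions at both endpoints of the ray. The Minkowski setting, where the duality between $\phi$ and $\phi^\ast$ is nonlinear, makes the transfer of this transverse information between points on a single ray notably more delicate than in the Euclidean case.
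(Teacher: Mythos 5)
Your two-step architecture (propagation of twice differentiability along normal rays, plus a null-projection argument onto $N^\phi(K)$) is the right one; it is exactly how the paper obtains this statement as the convex special case of Theorem~\ref{intro:twice_diff_points}, where Lemma~\ref{remark:arl_positive_reach} gives $\arl{K}{\phi} \equiv \infty$ so that the residual set $R$ and the cut locus disappear. The genuine gap is in your ray-invariance step. Writing $\anp{K}{\phi}$ near $a+s\eta$ as ``$\anp{K}{\phi}$ near $a+r_0\eta$ composed with a smooth radial adjustment'' is circular: the only natural such adjustment is $h_{r_0/s}$, where $h_t(y) = ty + (1-t)\anp{K}{\phi}(y)$, and its differentiability at $a+s\eta$ is precisely equivalent to the differentiability of $\anp{K}{\phi}$ there, i.e.\ to what you are trying to prove. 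The non-circular route is to invert: $h_t$ \emph{is} differentiable at the known good point $a+r_0\eta$, it is a homeomorphism of $\R^{\adim}\without K$ onto itself (Lemma~\ref{lem: dilation homeomorphism}, using $\arho{K}{\phi}\equiv\infty$ for convex $K$), and one must then show $\Der h_t(a+r_0\eta)$ is invertible before any inverse-function argument can transport differentiability to $h_t(a+r_0\eta)=a+(tr_0)\eta$. That invertibility is not formal: it rests on the two-sided bounds for the eigenvalues of $\Der \an{K}{\phi}(x)|T$ (for convex $K$, $0 \le \chi_i \le \da{K}{\phi}(x)^{-1}$), which the paper extracts by sandwiching the level set $S^{\phi}(K,r)$ between interior and exterior tangent $\phi$-balls at a twice-differentiability point (Lemmas~\ref{Gariepy-Pepe}, \ref{lem:existence_of_real_eigenvalues:aux}, and \ref{lem:existence_of_real_eigenvalues}\ref{existence_of_real_eigenvalues:3}--\ref{existence_of_real_eigenvalues:4}). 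Your sketch contains none of this, and it is the actual content of the propagation. (In the convex case you can at least avoid the density-point machinery of Lemma~\ref{lem:lip+ap-diff=diff}: since $\Sigma^{\phi}(K)=\varnothing$, Theorem~\ref{Lions} already makes $\anp{K}{\phi}$ locally Lipschitz on $\R^{\adim}\without K$, and $h_t$ is a genuine single-valued homeomorphism, so once $\Der h_t$ is known to be invertible the elementary inverse-function argument applies.)

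Your measure step is a legitimate alternative to the paper's. The paper slices by the coarea formula for $\da{K}{\phi}$ to get $\Haus{\adim-1}(\Sigma^{\phi}_2(K)\cap S^{\phi}(K,t))=0$ for a.e.~$t$, and then pushes this null set onto $N^\phi(K)$ by the Lipschitz map $x\mapsto(\anp{K}{\phi}(x),\an{K}{\phi}(x))$; you instead pull $\Leb{\adim}(\Sigma^{\phi}_2(K))=0$ back through $F(a,\eta,r)=a+r\eta$ via the area formula. Both work, but your version carries an extra obligation you have only asserted: that the approximate Jacobian of $F$ on the rectifiable set $N^\phi(K)\times(0,\infty)$ is positive $\Haus{\adim-1}\otimes\Leb{1}$-a.e.\ in the Minkowski setting (your displayed formula $r^{\adim-1}\prod_i(1+r\kappa_i)$ is not the correct normal-bundle Jacobian even in the Euclidean case, though positivity does hold for convex $K$ since the generalised curvatures are nonnegative). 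The paper's slicing argument avoids any Jacobian computation and needs only the Lipschitz continuity already supplied by Theorem~\ref{Lions}; I would adopt it unless you are prepared to prove the anisotropic Jacobian formula.
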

\noindent The exceptional set $Z$ cannot be excluded. In fact,
even if $ \phi $ is the Euclidean norm, there exist convex bodies $ K $ with
$ \mathcal{C}^{1,1} $ boundaries such that the set $ Z $ is dense in
$ N^{\phi}(K) $ with Hausdorff dimension $ \adim-1 $; see
\ref{convex_example}. Indeed, the construction of the
$ \mathcal{C}^{1,1} $-convex hypersurface in Theorem \ref{convex_example} shows
that one can choose $ Z $ to be somewhat arbitrarily complicated.  In the Euclidean setting
Theorem~\ref{second order diff points convex} is a~classical fact in convex
geometry; see~\cite{MR3155183}. The general anisotropic version in Theorem \ref{second order diff points convex} can be proved employing Theorem \ref{Lions} and following a similar argument. We also remark that for $ n =2 $ Theorem 1.3 can be deduced from a more general statement in \cite{MR3430855}. See also \cite{Hug2000-ju} for related results.

Our Theorem~\ref{intro:twice_diff_points} extends Theorem \ref{second order diff points convex} to arbitrary closed sets and it gives the~first answer to the question
of~Hiriart-Urruty, providing a new insight into the structure of $ \Sigma^{\phi}_2(K) $.

\subsection{The main results of the present paper}
In addition to the notions already introduced in the previous section, we
introduce here a few additional definitions and facts. Here $ K \subseteq
\Real{\adim} $ is always an arbitrary closed set. The~\emph{$\phi$-reach of~$K$}
is the function $\ar{K}{\phi}: N^{\phi}(K) \to (0, + \infty]$ given~by
\begin{equation}
    \label{eq:def-r-phi}
    \ar{K}{\phi}(a,\eta) = \sup \bigl\{ s >0 : \da{K}{\phi}(a + s\eta) = s \bigr\}
    \qquad \text{for $(a,\eta)\in N^{\phi}(K) $} \,.
\end{equation}
Simple arguments show that $ \ar{K}{\phi} $ is upper semicontinuous;
see~\ref{rem:rKphi-usc}. Moreover, we~define $\Cut^{\phi}(K)$, the
\emph{$ \phi $-cut locus of~$K$}, by
\begin{equation}
    \label{eq:def-Cut-locus}
    \Cut^{\phi}(K) = \bigl\{
    a + \ar{K}{\phi}(a, \eta)\eta : (a, \eta) \in N^{\phi}(K)
    \bigr\} \,.
\end{equation}
In view of this last definition, the number $\ar{K}{\phi}(a,\eta)$ can be seen
as \emph{the $ \phi $-distance of $a$ from the cut locus of $K$ in
  direction~$\eta$}; indeed, this function plays a central role in the seminal
work~\cite{MR1695025}, where it is proved to be Lipschitz continuous provided $
K $ is a smooth submanifold, and in other papers on the subject; see for
instance~\cite{MR2094267} and~\cite{MR2336304}). Note that (see
Remark~\ref{remark:inclusions})
\begin{equation*}
    \Sigma^{\phi}(K) \subseteq \Cut^{\phi}(K) \subseteq \Clos{\Sigma^{\phi}(K)}
    \,.
\end{equation*}
We notice also that $ \Leb{\adim}(\Cut^{\phi}(K)) = 0 $; see
Remark~\ref{Cut_Locus}. Since $ \Sigma^{\phi}(K) $ might not be nowhere dense,
the same is true for $ \Cut^\phi(K) $. Observe that $\anp{K}{\phi}$ induces a
natural fibration
\begin{displaymath}
    \Real{\adim} \without (\Cut^{\phi}(K) \cup K)
    = \bigl\{
    a + \rho\eta
    : (a, \eta) \in N^{\phi}(K) \,,
    0 < \rho < \ar{K}{\phi}(a, \eta)
    \bigr\}
    \,.
\end{displaymath}
Our goal is to study regularity properties of $\da{K}{\phi} $ on $\Real{\adim}
\without (\Cut^{\phi}(K) \cup K)$. To this end we look at the sets of points of
$ \Real{\adim} \without (\Cut^{\phi}(K) \cup K) $ with a \emph{uniform} positive
relative $ \phi $-distance to the cut-locus; in other words, we consider the
sets
\begin{equation}\label{eq: K sigma}
    K_{\sigma} = \bigl\{
    a + \rho \eta :
    (a, \eta) \in N^{\phi}(K)  ,\,
    0 < \sigma \rho \le \ar{K}{\phi}(a, \eta)
    \bigr\}
    \qquad \text{for $\sigma \ge 1$}  \,.
\end{equation}
Notice that $\Real{\adim} \without (\Cut^{\phi}(K) \cup K) = \bigcup_{\sigma >1}
K_{\sigma} $ but $ K_{\sigma} $ might have empty interior for every $\sigma >
1$.

Our first result asserts that $ \grad \da{K}{\phi} $ is locally Lipschitz
continuous on the sets $ K_{\sigma} $, which is a~sharp generalisation of
Theorem~\ref{Lions}. More precisely we prove the following result.
\begin{Theorem}[Lipschitz estimates for the gradient]
    \label{intro_Lipschitz}
    Suppose $\phi : \Real{\adim} \to \R$ is a uniformly convex norm of
    class~$\cnt{2}$ away from the origin, $K \subseteq \Real{\adim}$ is closed,
    $1 < \sigma < \infty$, $0 < s < t < \infty$, and
    \begin{displaymath}
        K_{\sigma,s,t} = \bigl\{ a + \rho \eta : (a, \eta) \in N^{\phi}(K) \,,
        s \leq \rho \leq t \,, \sigma \rho \le \ar{K}{\phi}(a, \eta) \bigr\} \,.
    \end{displaymath}
    Then $ \grad \da{K}{\phi} | K_{\sigma,s,t} $ is Lipschitz continuous.
\end{Theorem}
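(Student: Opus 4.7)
The plan is to deduce the theorem from the fact that $\da{K}{\phi}$ is both semiconcave and semiconvex on $K_{\sigma, s, t}$ with uniform constants. Semiconcavity of $\da{K}{\phi}$ on $\Real{\adim} \without K$ is classical, and on $K_{\sigma, s, t}$ the semiconcavity constant depends only on~$\phi$ and~$s$ (it essentially scales like $1/s$, reflecting the Hessian bound on $\phi^2(\,\cdot\,-a)$ on the $\phi$-annulus of width~$\ge s$). The nontrivial ingredient is the semiconvexity, and this is where the cut-distance constraint enters.

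For $x = a + \rho\eta \in K_{\sigma, s, t}$ set $y = a + \sigma \rho \eta$. By definition of $\ar{K}{\phi}$, the hypothesis $\sigma\rho \le \ar{K}{\phi}(a,\eta)$ forces $\da{K}{\phi}(y) = \sigma\rho$, so the open $\phi$-ball $\oballF{\phi}{y}{\sigma\rho}$ is disjoint from~$K$. For every $b \in K$ and $z \in \Real{\adim}$ the triangle inequality then yields $\phi(b - z) \ge \phi(b - y) - \phi(z - y) \ge \sigma\rho - \phi(z - y)$; taking the infimum over $b \in K$,
\begin{equation*}
    \da{K}{\phi}(z) \;\ge\; \sigma\rho - \phi(z - y) \qquad \text{for every $z \in \Real{\adim}$.}
\end{equation*}

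Next I Taylor-expand $\phi$ at the nonzero vector $x - y = -(\sigma - 1)\rho\eta$, whose $\phi$-norm equals $(\sigma - 1)\rho \ge (\sigma - 1)s > 0$. Since $\phi \in \cnt{2}(\Real{\adim} \without \{0\})$, there exists $M' = M'(\phi, \sigma, s, t)$ such that for $z$ in a Euclidean neighbourhood of~$x$ of radius comparable to $(\sigma - 1)s$,
\begin{equation*}
    \phi(z - y) \;\le\; (\sigma - 1)\rho - \langle \grad\phi(\eta),\, z - x\rangle + \tfrac{M'}{2}\,|z - x|^2,
\end{equation*}
where I used the identities $\phi(x - y) = (\sigma - 1)\rho$ and $\grad\phi(x - y) = -\grad\phi(\eta)$, both valid by $0$-homogeneity under positive scaling and symmetry of~$\phi$. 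Combining the two displayed inequalities and recalling $\rho = \da{K}{\phi}(x)$ and $\grad\da{K}{\phi}(x) = \grad\phi(\eta)$ yields the pointwise semiconvexity
\begin{equation*}
    \da{K}{\phi}(z) \;\ge\; \da{K}{\phi}(x) + \langle \grad\da{K}{\phi}(x),\, z - x\rangle - \tfrac{M'}{2}\,|z - x|^2
\end{equation*}
for $z$ in a uniform Euclidean neighbourhood of~$x$.

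Both semi-properties now hold at every point of $K_{\sigma, s, t}$ with constants uniform on this set, and $\da{K}{\phi}$ is $\cnt{1}$ on the slightly larger open set~$\Real{\adim} \without (K \cup \Cut^{\phi}(K))$; a~standard argument from semiconcave-function theory---pointwise a.e.\ twice differentiability à la Alexandrov, yielding an essentially bounded distributional Hessian, which is then integrated along Euclidean segments inside the~set---shows that $\grad\da{K}{\phi}$ is Lipschitz on~$K_{\sigma, s, t}$. The main obstacle is the bookkeeping for the size of the semiconvexity neighbourhood, which must be uniform on~$K_{\sigma, s, t}$; this is guaranteed by the slack $\sigma > 1$, since the whole argument becomes vacuous as~$\sigma \searrow 1$ (then $y \to x$ and the inequality $\da{K}{\phi}(z) \ge \sigma\rho - \phi(z - y)$ carries no curvature information).
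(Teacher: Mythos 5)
Your route is genuinely different from the paper's. The paper proves the stronger statement that the projection $\anp{K}{\phi}$ itself is Lipschitz on these sets (Theorem~\ref{thm:lipxi-minkowski} and Corollary~\ref{cor:xi-lip-cont}), by a delicate comparison of the graphs of pieces of $\phi$-spheres over a common tangent hyperplane, and then reads off the gradient estimate from $\grad \da{K}{\phi}(x) = \grad\phi(x-\anp{K}{\phi}(x))$. You instead work directly with $\da{K}{\phi}$ and establish two-sided quadratic touching at every point of $K_{\sigma,s,t}$. Your two one-sided estimates are correct: the upper paraboloid comes from $\da{K}{\phi}(z) \le \phi(a-z)$ with equality at $x$ and a Hessian bound of order $1/s$; the lower one comes from the interior tangent ball $\oballF{\phi}{y}{\sigma\rho}$ at $y = a+\sigma\rho\eta$ (the reach hypothesis does give $\da{K}{\phi}(y)=\sigma\rho$, hence $\da{K}{\phi}(z)\ge \sigma\rho-\phi(z-y)$ for all $z$), together with the Taylor expansion of $\phi$ at $x-y$, whose $\phi$-norm is bounded below by $(\sigma-1)s$, so that the opening $M'$ and the radius of validity are uniform on $K_{\sigma,s,t}$. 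The linear parts of the two touching quadratics agree (both equal $\grad\phi(\eta)=\grad\da{K}{\phi}(x)$), which is what makes the scheme viable. Note that the blow-up of $M'$ as $\sigma\downarrow 1$ matches the paper's Remark~\ref{rem:parabola}. What your route does not give is the Lipschitz continuity of $\anp{K}{\phi}$ itself, which the paper needs later (e.g.\ in \ref{lem: density points} and \ref{lem:existence_of_real_eigenvalues}); for Theorem~\ref{intro_Lipschitz} alone, your argument is considerably more elementary.

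The one step that would fail as written is the conclusion. You propose to finish by ``pointwise a.e.\ twice differentiability \`a la Alexandrov, yielding an essentially bounded distributional Hessian, which is then integrated along Euclidean segments inside the set.'' This is not available: as the paper stresses, $K_{\sigma,s,t}$ may have \emph{empty interior} for every $\sigma>1$ (the cut locus can be dense), so a segment joining two points of $K_{\sigma,s,t}$ need not meet $K_{\sigma,s,t}$ again, and there is no open set on which both one-sided bounds hold and on which a bounded distributional Hessian could be integrated. The correct, and equally standard, way to close the argument uses only what you have already established: if at $x_1,x_2\in K_{\sigma,s,t}$ the function is touched from above and below by quadratics with common linear part $\grad\da{K}{\phi}(x_i)$, openings at most $M$, valid on \emph{ambient Euclidean balls} $\oball{x_i}{r_0}$ of uniform radius, then testing the four inequalities at $z = x_2 + |x_1-x_2|v$ for unit vectors $v$ gives $|\grad\da{K}{\phi}(x_1)-\grad\da{K}{\phi}(x_2)| \le 3M|x_1-x_2|$ whenever $|x_1-x_2|\le r_0/2$; pairs with $|x_1-x_2|>r_0/2$ are handled by the uniform bound $|\grad\da{K}{\phi}| \le \sup\{|\grad\phi(u)| : \phi(u)=1\}$. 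This comparison never leaves the ambient neighbourhoods and never requires $K_{\sigma,s,t}$ to contain a segment. With that replacement your proof is complete.
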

\noindent The restriction ``$\sigma \rho \le \ar{K}{\phi}(a, \eta)$'' cannot be
avoided since the~Lipschitz constant of $\grad \da{K}{\phi}$ may explode near
points of~$\Cut^{\phi}(K)$; cf.~\ref{rem:parabola}.  Observe that if $x \in
\Real{\adim} \without \bigl( \Clos{\Sigma^{\phi}(K)} \cup K \bigr)$, then $ x $
has positive distance from $\Cut^{\phi}(K) $; hence,
Theorem~\ref{intro_Lipschitz} includes Theorem \ref{Lions} as a~special case;
moreover, it is sharp in terms of specifying the set of points where a Lipschitz
condition for $\grad \da{K}{\phi}$ holds. There are two main difficulties in
proving \ref{intro_Lipschitz}. The first one arises from the fact that $
\Cut^{\phi}(K) $ might be dense in $ \Real{\adim} \without K $ and consequently
it does not seem to be possible to rely on general results for Hamilton-Jacobi
equations as for Theorem \ref{Lions}. The second difficulty comes from working
with a possibly non-Euclidean norm. In fact, if $ \phi $ is the Euclidean norm
then the proof of Theorem \ref{intro_Lipschitz} follows rather directly from the
geometric argument originally found by Federer for sets of positive reach in
\cite{Federer1959}, see \cite[3.10(1)]{MR4117503}. However, this argument is not
applicable if $ \phi $ is not the Euclidean norm, in which case one needs a
considerably more sophisticated approach, based on a careful analysis of the
geometric properties of the $ \phi$-balls (which occupies the entire section
\ref{sec:Lipschitz}). In fact this analysis allows to show that the $ \phi
$-nearest point projection $ \anp{K}{\phi} $ onto $ K $ satisfies the asserted
Lipschitz property. Recalling the well known relation between $ \anp{K}{\phi} $
and $ \grad \da{K}{\phi} $ (see
Lemma~\ref{basic_properties_of_da_and_anp}\ref{basic_properties_of_da_and_anp_3})
\begin{equation*}
    \grad \da{K}{\phi}(x) = \grad \phi(x - \anp{K}{\phi}(x)) \quad \textrm{for every $ x \in \dmn \grad \da{K}{\phi} $,}
\end{equation*}
we get the conclusion of~\ref{intro_Lipschitz}. Notice that \emph{uniform
  convexity and regularity} of the norm for $ \adim \geq 3 $ are crucial to
obtain the Lipschitz property in Theorem \ref{intro_Lipschitz}; see the last
section in~\cite{MR3430855}.

\paragraph{} The second goal of this paper is to extend Theorem \ref{second
  order diff points convex} to arbitrary closed sets. In case of convex sets the
reach function satisfies $ \ar{K}{\phi}(a, \eta) = + \infty $ for every $(a,
\eta)\in N^\phi(K) $ and consequently $ \Cut^\phi(K) = \varnothing $.  This is a
very special situation given by the assumption of convexity; indeed, even if we
consider $ \mathcal{C}^{1,1} $ convex hypersurfaces the reach function might be
discontinuous on a dense set and the cut-locus might not be nowhere dense;
see~\ref{rem:convex-hypersurface}. This suggests that a generalization of Theorem
\ref{second order diff points convex} to non-convex sets requires a careful analysis of the behaviour of~$ \ar{K}{\phi} $ and the connection with the points of differentiability of $ \anp{K}{\phi} $. This can be done considering the new reach-type function (recall~\eqref{eq: K sigma})
\begin{equation}
    \label{eq: arl}
    \arl{K}{\phi}(a, \eta)
    = \sup \bigl\{ \sigma r
    :  0 < r < \ar{K}{\phi}(a, \eta) ,\, \sigma > 1, \,
    \density^{\adim}(\Leb{\adim} \restrict K_{\sigma},a + r\eta) = 1 \bigr\} 
    \cup \{0\}
\end{equation}
for $(a, \eta)\in N^\phi(K)$. It holds that $0 \le \arl{K}{\phi}(a, \eta) \leq
\ar{K}{\phi}(a, \eta) $ for every $ (a, \eta)\in N^\phi(K)$; see Remark \ref{rem: arl less than ar}. However simple examples
show that there exist closed sets $ K $ for which $ \arl{K}{\phi}(a, \eta) <
\ar{K}{\phi}(a, \eta) $ for some $ (a, \eta) \in N^\phi(K) $;
cf.~\ref{example:Q-Cut_not_empty}.
If $ K $ is a closed set such that the $\phi
$-tubular neighbourhood $ \{ x : \da{K}{\phi}(x) < \rho \} $ of radius $ \rho >
0 $ does not intersect $ \Sigma^{\phi}(K) $, then $ \ar{K}{\phi}(a, \eta) \geq
\arl{K}{\phi}(a, \eta) \geq \rho $ for every $ (a, \eta)\in N^\phi(K) $;
cf.~Lemma \ref{remark:arl_positive_reach}.

Employing the reach function $ \arl{K}{\phi} $ we obtain the following result
on the structure of the $ \Sigma^{\phi}_2(K) $ for an arbitrary closed set $ K $.

\begin{Theorem}\label{intro:twice_diff_points}
    Suppose $K \subseteq \Real{\adim}$ is closed. Then 
    \begin{enumerate}
    \item
        \label{intro:twice_diff_points:1}
        $ \Cut^\phi(K) \subseteq \Sigma^{\phi}_2(K) $.
    \item
        \label{intro:twice_diff_points:propagation} If $(a, \eta)\in N^\phi(K) $ and there exists $ 0 < r < \arl{K}{\phi}(a, \eta) $ such that $ a + r\eta \notin \Sigma^\phi_2(K) $, then $ a + s\eta \notin \Sigma^\phi_2(K) $ for every $ 0 < s < \arl{K}{\phi}(a, \eta) $.
    \item
        \label{intro:twice_diff_points:2}
        $ \Haus{n-1} \bigl( \{ (a, \eta): \arl{K}{\phi}(a, \eta) < \ar{K}{\phi}(a,\eta)  \} \bigr) =0 $.
    \item
        \label{intro:twice_diff_points:3}
        there exist $ Z \subseteq N^\phi(K) $ with $ \Haus{\adim-1}(Z) =0 $ and a residual set
        \begin{equation*}
            R \subseteq \bigl\{ a + r\eta : \arl{K}{\phi}(a, \eta) \leq r < \ar{K}{\phi}(a, \eta) \bigr\}
        \end{equation*}
        such that
        \begin{equation*}
            \Sigma^{\phi}_2(K) \setminus \Cut^\phi(K)
            = \bigl\{ a + r\eta : 0 < r < \arl{K}{\phi}(a,\eta), \; (a, \eta) \in Z \bigr\} \cup R.
        \end{equation*}
    \end{enumerate}
    In particular, for $\Haus{\adim-1}$ almost all $(a, \eta) \in N^\phi(K) $ the
    distance function $\da{K}{\phi} $ is pointwise twice differentiable at all
    points of the line segment $ \{ a + r \eta: 0 < r < \ar{K}{\phi}(a, \eta) \} $.
\end{Theorem}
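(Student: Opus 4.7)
The plan is to reduce the four parts to two main tools: the equivalence from~\ref{basic_properties_of_da_and_anp}\ref{basic_properties_of_da_and_anp_6} between pointwise twice differentiability of $\da{K}{\phi}$ at $a + r\eta$ (for $(a,\eta)\in N^\phi(K)$ and $0<r<\ar{K}{\phi}(a,\eta)$) and pointwise differentiability of the nearest point projection $\anp{K}{\phi}$ at the same point; and Theorem~\ref{intro_Lipschitz}, which makes $\anp{K}{\phi}|K_{\sigma,s,t}$ Lipschitz for every admissible $\sigma,s,t$. Combined with the countable $(\adim-1)$-rectifiability of $N^\phi(K)$ and the radial fibration of $\Real{\adim}\setminus(K\cup\Cut^\phi(K))$ over $N^\phi(K)$, these supply the framework.

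For part~\ref{intro:twice_diff_points:1} I would argue by contradiction. Let $x_0 = a + r_0\eta$ with $r_0 = \ar{K}{\phi}(a,\eta)$; pick $\varepsilon_n\downarrow 0$ and $b_n \in K$ with $\phi(a + (r_0+\varepsilon_n)\eta - b_n) < r_0 + \varepsilon_n$. If some subsequence satisfies $b_n \to b_\infty \ne a$, then $x_0 \in \Sigma^\phi(K) \subseteq \Sigma^\phi_2(K)$ and we are done. Otherwise $b_n \to a$, and uniform convexity of $\phi$ quantifies the drop caused by $b_n$ lying strictly inside the $\phi$-ball centred at $a + (r_0+\varepsilon_n)\eta$ with radius $r_0+\varepsilon_n$ near its tangent at $a$, while the identity $\da{K}{\phi}(a + r\eta) = r$ for $r\le r_0$ pins any putative second-order Taylor polynomial of $\da{K}{\phi}$ at $x_0$ to be purely radial-linear. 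The consequent requirement $r - \da{K}{\phi}(a + r\eta) = o((r-r_0)^2)$ clashes with the geometric drop estimate, producing the contradiction.

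For part~\ref{intro:twice_diff_points:propagation}, suppose $a + r_0\eta \notin \Sigma^\phi_2(K)$ for some $0<r_0<\arl{K}{\phi}(a,\eta)$. By~\eqref{eq: arl} one extracts $\sigma > 1$ with $\sigma r_0 \le \arl{K}{\phi}(a,\eta)$ and $a + r_0\eta$ a Lebesgue density-$1$ point of $K_\sigma$; Theorem~\ref{intro_Lipschitz} applied to $\anp{K}{\phi}|K_\sigma$, together with the standard density-$1$ criterion for pointwise differentiability of Lipschitz maps defined on measurable sets, yields pointwise differentiability of $\anp{K}{\phi}$ at $a + r_0\eta$. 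Propagation along the full open ray $\{a + s\eta : 0 < s < \arl{K}{\phi}(a,\eta)\}$ then follows because $\anp{K}{\phi}$ is constant on rays and its pointwise derivative transports via an explicit radial-scaling (Jacobi-type) relation centred at $a$. For part~\ref{intro:twice_diff_points:2}, the bijection $F(a,\eta,r) = a + r\eta$ on $\{(a,\eta,r) : r < \ar{K}{\phi}(a,\eta)\} \subseteq N^\phi(K) \times (0,\infty)$ parametrises $\Real{\adim}\setminus(K\cup\Cut^\phi(K)) = \bigcup_{\sigma>1} K_\sigma$; since $\Leb{\adim}$-almost every point of each $K_\sigma$ is a density-$1$ point, a Fubini argument in the radial variable combined with the monotonicity $K_{\sigma'} \supseteq K_\sigma$ for $\sigma' < \sigma$ yields $\arl{K}{\phi}(a,\eta) = \ar{K}{\phi}(a,\eta)$ for $\Haus{\adim-1}$-a.e.\ $(a,\eta)$.

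For part~\ref{intro:twice_diff_points:3}, part~\ref{intro:twice_diff_points:propagation} rewrites the \emph{interior} failure set as $\{a + r\eta : (a,\eta)\in Z,\; 0 < r < \arl{K}{\phi}(a,\eta)\}$, where $Z \subseteq N^\phi(K)$ is the set of $(a,\eta)$ whose entire open ray lies in $\Sigma^\phi_2(K)$; a Rademacher-type theorem for the Lipschitz map $\anp{K}{\phi}|K_\sigma$, combined with the rectifiability of $N^\phi(K)$ seen as a transversal to the radial foliation of $K_\sigma$, forces $\Haus{\adim-1}(Z) = 0$. The \emph{boundary} stratum $\{a + r\eta : \arl{K}{\phi}(a,\eta) \le r < \ar{K}{\phi}(a,\eta)\}$ decomposes into countably many closed nowhere-dense subsets via the density-$1$ failure encoded in~\eqref{eq: arl}, so its non-twice-differentiability points can be absorbed into a topologically residual $R$. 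The \emph{in particular} clause then follows from part~\ref{intro:twice_diff_points:2} ($\arl{K}{\phi} = \ar{K}{\phi}$ $\Haus{\adim-1}$-a.e.) combined with $\Haus{\adim-1}(Z) = 0$. The main obstacle is part~\ref{intro:twice_diff_points:3}, where the measure-theoretic Rademacher analysis of $\anp{K}{\phi}|K_\sigma$ transverse to the radial foliation must be dovetailed with a Baire-category absorption on the boundary strata, all while respecting the mixed measure-topological definition~\eqref{eq: arl} of $\arl{K}{\phi}$.
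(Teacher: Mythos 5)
Your overall architecture (propagation along normal rays, then a coarea/Fubini argument plus the Lipschitz estimate of Theorem~\ref{intro_Lipschitz} to transfer null sets between level sets and $N^\phi(K)$) matches the paper's, and your treatment of~\ref{intro:twice_diff_points:2} and of the ``in particular'' clause is essentially the paper's. But two steps have genuine gaps. The first is your proof of~\ref{intro:twice_diff_points:1} in the case $b_n \to a$. The definition of $\ar{K}{\phi}$ only gives $\da{K}{\phi}(a+r\eta) < r$ for $r > r_0$; it provides no quantitative lower bound on the drop $r - \da{K}{\phi}(a+r\eta)$, so there is no ``geometric drop estimate'' for the requirement $r - \da{K}{\phi}(a+r\eta) = o((r-r_0)^2)$ to clash with. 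Uniform convexity does not supply one: $b_n$ may sit in the thin lune between $\cballF{\phi}{x_0}{r_0}$ and $\oballF{\phi}{a+(r_0+\varepsilon_n)\eta}{r_0+\varepsilon_n}$ arbitrarily close to its tip at $a$, producing an arbitrarily small drop. The obstruction at a cut point is transverse, not radial (cf.~\ref{rem:parabola}, where the failure is that of $\anp{K}{}$ in the tangential direction). The paper's Lemma~\ref{lem:Cut-subs-Sigma} instead uses the second-order graph representation of the level set $S^{\phi}(K,r_0)$ from~\ref{Gariepy-Pepe} to produce a small ball $\cballF{\phi}{x_0+s\nu}{s}$ meeting $S^{\phi}(K,r_0)$ only at $x_0$, and from this deduces $a \in \anp{K}{\phi}(x_0+\delta\nu)$, i.e.\ that the ray extends; some such transverse argument is unavoidable.

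The second gap is in~\ref{intro:twice_diff_points:propagation}, on which the description of $Z$ in~\ref{intro:twice_diff_points:3} depends. To move differentiability of $\anp{K}{\phi}$ from $a+r_0\eta$ to $a+s\eta$ with $s>r_0$ one must differentiably invert the homothety $h_t(y) = ty + (1-t)\anp{K}{\phi}(y)$, and for that one needs $\Der h_t(x)$ to be an isomorphism for $0<t<\arho{K}{\phi}(x)$. This is precisely where the eigenvalue bounds $\tfrac{1}{(1-\lambda)\da{K}{\phi}(x)} \le \chi_i \le \tfrac{1}{\da{K}{\phi}(x)}$ for $\Der\an{K}{\phi}(x)|T$ (Lemma~\ref{lem:existence_of_real_eigenvalues}\ref{existence_of_real_eigenvalues:3}, proved by sandwiching the level set between two tangent $\phi$-balls) enter, together with Zaj\'i\v{c}ek's lemma for inverses of multivalued maps, Buczolich's preservation of density points under bi-Lipschitz maps, and Lemma~\ref{lem:lip+ap-diff=diff}; your ``Jacobi-type relation'' names the mechanism but supplies none of these ingredients, and without the lower eigenvalue bound the forward propagation fails. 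Finally, you misread the role of $R$: the paper does not prove $R$ residual in the Baire sense — it is simply the leftover set $(\Sigma^{\phi}_2(K)\setminus\Cut^\phi(K)) \cap \{a+r\eta : \arl{K}{\phi}(a,\eta) \le r < \ar{K}{\phi}(a,\eta)\}$, and whether it equals the whole boundary stratum is explicitly left open — so your ``Baire-category absorption'' (which would in any case produce a meagre set, not a residual one, from a countable union of closed nowhere dense sets) is neither needed nor coherent as stated.
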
 
\noindent We do not know whether $R = \{ a + r \eta: \arl{K}{\phi}(a, \eta) \leq
r < \ar{K}{\phi}(a, \eta) \}$; this is left as an open problem. In
Remark~\ref{remark : residualr set} we show, however, that
\begin{equation*}
    \Leb{n}( \{ a + r \eta: \arl{K}{\phi}(a, \eta)  \leq r < \ar{K}{\phi}(a, \eta)  \}   ) =0 \,.
\end{equation*}
The proof of Theorem \ref{intro:twice_diff_points} is based on the Lipschitz
property proved in Theorem \ref{intro_Lipschitz} and on some general estimates
for the pointwise principal curvatures of level sets of $\da{K}{\phi} $ (these
level sets might not even be topological manifolds but they admit a natural
notion of pointwise curvature; see~\ref{Gariepy-Pepe}). Moreover, results on the
preservation of the density points under bilipschitz transformations
(see~\cite{Buczo1992}) and on the approximate differentiability of multivalued
functions (see \ref{lem:lip+ap-diff=diff}) are used in a crucial way.

\subsection{Applications}

Here we briefly mention a couple of different applications of the results of the present paper.

\paragraph{Pointwise regularity and gradient Lipschitz estimates for solutions
  of the Eikonal equation.} Suppose $ \Omega \subseteq \Real{\adim} $ is an
arbitrary open set, $ \phi $ is a uniformly convex $ \mathcal{C}^2 $-norm and
$ \phi^\ast(u) = \sup\{ u \bullet v : \phi(v) =1 \} $. It is well known that
$ \da{K}{\phi} $ is the unique viscosity solution of the following Eikonal
equation on $ \Omega $
\[
\begin{cases}
    \phi^\ast(\grad u) =1 & \textrm{on $ \Omega $}\\
    u =0 & \textrm{on $ \partial \Omega $}
\end{cases}
\]
If $ \partial \Omega $ is hypersurface of class at least~$ \mathcal{C}^2 $, then
the local structure of this solution has been extensively studied and it is by
now very well understood (see the references cited at the beginning of this
introduction). On the other hand, as already explained, if $ \partial\Omega $ is
not $ \mathcal{C}^2 $ then such a solution can have a very complicated (in
particular dense!) singular set (see \cite{MR4279967}) and many classical
results in the theory do not give an insight about its local structure. In this
direction our results in Theorems~\ref{intro_Lipschitz}
and~\ref{intro:twice_diff_points} provide a new and rather sharp description of
the structure of the solutions of the Eikonal equation for \emph{arbitrary}
domains.

\paragraph{Steiner formula and curvature measures in uniformly convex finite
  dimensional Banach spaces} One of the original motivation of the second author
for the present work is to provide results that can be used to advance
the theory of convex and integral geometry in Minkowski spaces; see~\cite{Hug2000-ju}.
In~\cite{HugSan2022} the second author in collaboration with Daniel Hug
employs Theorems \ref{intro_Lipschitz} and \ref{intro:twice_diff_points} to
prove the Steiner formula for arbitrary closed sets in a uniformly convex Banach
space (Minkowski space); thus, extending the same formula previously obtained in
\cite{MR2031455} in the Euclidean space. The Steiner formula is then used as a
starting point to develop the theory of curvature measures for sets of positive
reach in a Minkowski space.

\section{Preliminaries}
\label{sec:prelim}

\subsection{Notation}
\label{sec:notation}

We follow traditional well established and widely accepted conventions and
notations typical for geometric measure theory. For convenience of the reader
we~briefly describe them here. We use the following symbols

\begin{enumerate}[itemsep=3pt,parsep=0pt,topsep=0.5em,align=left,itemindent=1em,labelsep=1em]
\item[$\R$] set of real numbers;
\item[$\overline{\R} = \R \cup \{ -\infty, +\infty \}$] extended reals;
\item[$\natp$] set of positive integers;
\item[$\varnothing$] the empty set;
\item[$\HM^\vdim$] the $\vdim$-dimensional Hausdorff measure;
\item[$\LM^\adim$] the Lebesgue measure over $\R^n$;
\item[$\unitmeasure{k}$] Lebesgue measure of the unit ball in $\R^k$;
\item[$\sphere{\adim-1}$] the unit Euclidean sphere in $\R^n$;
\item[$x \bullet y$] the inner product of two vectors $x$ and $y$ in a Euclidean space;
\item[$|x|$] the norm of a vector $x$ in a normed vectorspace;
\item[$A \without B$] set-theoretic difference of two sets $A$ and $B$;
\item[$\Clos{A}$] closure of a subset $A$ of a topological space;
\item[$\Int{A}$] interior of a subset $A$ of a topological space;
\item[$\Bdry{A} = \Clos{A} \without \Int{A}$] topological boundary of a~subset~$A$ of a topological space;
\item[$\dmn f$] domain of a function $f$;
\item[$f\lIm A \rIm$] the image of a set $A \subset \dmn f$ under the function $f$;
\item[$\im f$] the image of a function $f$, i.e., $\im f = f \lIm \dmn f \rIm$;
\item[$\uD f$] derivative of a function $f$ defined on a subset of a normed vectorspace; cf.~\ref{rem:pt-diff1};
\item[$\grad f$] gradient of a real-valued function $f$ defined on a subset of a Euclidean space;
\item[$A+B = \{ a+b : a \in A \,, b \in B \}$] algebraic sum of subsets $A$ and
    $B$ of a vectorspace;
\item[$\Hom(X,Y)$] vectorspace of linear maps of type $X \to Y$;
\item[$\Lambda x$ or $\langle x \,, \Lambda \rangle$] the value of a linear map
    $\Lambda$ on a vector $x \in \dmn \Lambda$;
\item[$\oballF{\phi}{x}{r} = \{ z : \phi(z-x) < r \}$] open ball with respect to a~norm~$\phi$;
\item[$\cballF{\phi}{x}{r} = \{ z : \phi(z-x) \le r \}$] closed ball with respect to a~norm~$\phi$;
\item[$f|A$] restriction of a function $f$ to the set $A \subseteq \dmn f$;
\item[$\subgr f(x)$] the set of subgradients of a~convex function~$f$
    at~$x \in \dmn f$; cf.~\ref{def:subgradient} and~\ref{rem:subgradients};
\item[$\project T$] the linear orthogonal projection onto a linear subspace $T$
    of a Euclidean space;
\item[$T^{\perp}$] the orthogonal complement of a linear subspace $T$ of a Euclidean space;
\item[\protect{$[ A \ni x \mapsto f(x)]$}] an unnamed function defined on $A$
    whose value at $x \in A$ is $f(x)$;
\item[$\mu \restrict A$] the restriction of a measure $\mu$ to a set $A$; cf.~\cite[2.1.2]{Federer1969};
\item[$\Tan(S,x)$] tangent cone at $x$ of a subset $S$ of a normed vectorspace; cf.~\cite[3.1.21]{Federer1969};
\item[$\Nor(S,x)$] normal cone at $x$ of a subset $S$ of a Euclidean space; cf.~\cite[3.1.21]{Federer1969};
\end{enumerate}

Given $k \in \natp$ and $0 < \alpha < 1$ we shall say that a function $f$ is
\emph{of class~$\cnt{k,\alpha}$} if the $k^{\mathrm{th}}$ derivative $\uD^kf$
exists and satisfies the H{\"o}lder condition with exponent~$\alpha$;
cf.~\cite[3.1.11 and 5.2.1]{Federer1969}. We~say that $f$ is \emph{of
  class~$\cnt{k}$} if $\uD^kf$ is just continuous.

\begin{Remark}
    We study several notions depending on the norm $\phi$, whose name is always
    in the superscript. In~case $\phi$ is the standard Euclidean norm
    on~$\R^{\adim}$ we omit it in the notation so, e.g., if $x \in \R^{\adim}$
    and $0 < r < \infty$, then $\oball xr$ denotes an open Euclidean ball in
    $\R^{\adim}$.
\end{Remark}

We~now introduce some classical functions

\begin{enumerate}[itemsep=3pt,parsep=0pt,topsep=0.5em,align=left,itemindent=*,labelsep=1em,label=$\bullet$]
\item Hausdorff densities of a Radon measure $\mu$ at $x$
    \begin{gather}
        \density^{\ast \adim}(\mu,x) = \limsup_{r \downarrow 0} \frac{\mu(\cball xr)}{\unitmeasure{\adim}r^{\adim}}  \,,
        \qquad
        \density^{\adim}_{\ast}(\mu,x) = \liminf_{r \downarrow 0} \frac{\mu(\cball xr)}{\unitmeasure{\adim}r^{\adim}}  \,,
        \\
        \text{and} \quad
        \density^{\adim}(\mu,x) = \density^{\ast \adim}(\mu,x)
        \quad \text{whenever $\density^{\ast \adim}(\mu,x) = \density^{\adim}_{\ast}(\mu,x)$} \,;
    \end{gather}
\item dilations
    \begin{displaymath}
        \scale{r}(x) = rx \quad \text{whenever $r \in \R$ and $x$ is a vector} \,;
    \end{displaymath}
\item translations
    \begin{displaymath}
        \trans{a}(b) = a + b \quad \text{whenever $a$ and $b$ are vectors in a vectorspace $X$} \,;
    \end{displaymath}
\item the identity map on a set $X$
    \begin{displaymath}
        \id{X}(x) = x \quad \text{whenever $x \in X$} \,.
    \end{displaymath}
\end{enumerate}

\begin{Remark}
    Without introducing any new symbols (in order not to make the notation too
    heavy) we find that given a function $f$ defined on a subset of a normed
    vectorspace
    \begin{enumerate}[itemsep=3pt,parsep=0pt,topsep=0.5em,align=left,itemindent=1em,labelsep=1em]
    \item[$\dmn \uD f$] is the set of differentiability points of $f$.
    \end{enumerate}
\end{Remark}

\begin{Remark}
    We shall repeatedly make use of the following simple fact. If $f$ is a~real
    valued function defined on a subset of a Euclidean space $X$,
    $x \in \dmn \uD^2 f$, and $u,v \in X$, then
    \begin{displaymath}
        \uD \grad f(x) u \bullet v = \langle u ,\, \uD \grad f (x) \rangle \bullet v = \uD^2f(x)(u,v) \,.
    \end{displaymath}
\end{Remark}

\begin{Remark}
    \label{rem:naming-convention}
    We adopt the convention that ``$C_{x.y}(a,b,c)$'' refers to the object
    (e.g. constant) defined in item (lemma, theorem, corollary, remark) x.y
    under the name~''$C$'', where $a$, $b$, $c$ should be substituted for
    parameters of x.y in order of their occurrence. For instance, if $v$ is a
    vector such that $\phi(v) = 1$, then
    $M_{\text{\ref{rem:graph}}}(\tfrac 12,v)$ is the manifold constructed by
    employing~\ref{rem:graph} with $\tfrac 12$ and $v$ in place
    of~``$\varepsilon$'' and~``$\eta$''.
\end{Remark}

\subsection{Basic concepts}

\begin{Definition}
    \label{def:sc-norm} We say that a norm $\phi : X \to \R$ is \emph{strictly
      convex} if for all $a,b \in X$
    \begin{displaymath}
        \phi(a+b) = \phi(a) + \phi(b) \quad \text{implies} \quad \phi(b) a =
        \phi(a) b \,.
    \end{displaymath}
\end{Definition}

\begin{Remark}
    In the sequel, unless otherwise specified, $\adim$ shall be a fixed positive
    integer, $X$ will be a~vectorspace of dimension $\adim$, and
    $\phi : X \to \R$ will be a~strictly convex norm on~$X$ of class~$\cnt{2}$
    away from the origin. Of~course, $X$ shall be isomorphic with~$\R^{\adim}$
    but, whenever we write $X$ instead of $\R^{\adim}$, we want to emphasise
    that there might not be a~natural choice of a~Euclidean structure on~$X$.
\end{Remark}

\begin{Definition}
    \label{def:dual-norm}
    Whenever $X$ is equipped with a scalar product and $\phi : X \to \R$ is a
    norm we define the \emph{conjugate norm} $\phi^{\ast} : X \to \R$ by the
    formula
    \begin{displaymath}
        \phi^{\ast}(x)
        = \sup \bigl\{
        x \bullet y : y \in X \,, \phi(y) = 1
        \bigr\}
        \quad \text{for $x \in X$} \,.
    \end{displaymath}
\end{Definition}

\begin{Definition}[\protect{cf.~\cite[2.12, 2.13]{DRKS2020ARMA}}]
    \label{def:uniformly-convex-norm} Assume $X$ is equipped with a Euclidean
    structure. We say that $\phi : X \to \R$ is a~\emph{uniformly convex norm}
    if it is a norm and there exists $\gamma > 0$ such that the function $\bigl[
    X \ni x \mapsto \phi(x) - \gamma |x| \bigr]$ is convex.
\end{Definition}

\begin{Remark}[\protect{cf.~\cite[2.32]{DRKS2020ARMA}}]
    \label{rem:phi-phiast} If $\phi$ is a uniformly convex norm of
    class~$\cnt{2}$ away from the origin, then $\phi^{\ast}$ is also a uniformly
    convex norm of class~$\cnt{2}$ away from the origin. Moreover, $\grad
    \phi^{\ast}|S^*$ is the inverse of $\grad \phi|S$, where $S = \Bdry
    \cballF{\phi}{0}{1}$ and $S^* = \Bdry \cballF{\phi^{\ast}}{0}{1}$.
\end{Remark}

\begin{Definition}
    Given a closed set $ K \subseteq X $ we define
    \begin{displaymath}
        S^{\phi}(K,r) = \bigl\{ x : \da{K}{\phi}(x) = r \bigr\}
        \quad \text{for $r > 0$} \,.
    \end{displaymath}
\end{Definition}

\begin{Definition}
    \label{def:multivalued}
    A map of the type $f : X \to \powerset{Y}$ shall be called
    \emph{$Y$-multivalued}. In case $x \in X$ and $f(x)$ is a~singleton,
    we~abuse the notation and write $f(x)$ to denote the unique member
    of~$f(x)$.
\end{Definition}

\begin{Definition}\label{def: restrictions}
    Let $ f $ be a $ Y $-multivalued function on $ X $ and $ A \subseteq X
    $. Then we denote with $ f|A $ the $ Y $-multivalued map on $ X $ defined as
    \begin{equation*}
        (f|A)(x) = f(x) \quad \textrm{if $ x \in A $}, \qquad (f|A)(x)= \varnothing \quad \textrm{if $ x \notin A $.}
    \end{equation*}
\end{Definition}

\begin{Definition}\label{def: inverse}
    Let $ f $ be a $ Y $-multivalued function on $ X $ and $ A \subseteq X
    $. Then we define the inverse $ f^{-1} $ of $ f $ as the $ X $-multivalued
    map on $ Y $ as
    \begin{equation*}
        f^{-1}(y) = \{x : y \in f(x)\} \qquad \textrm{for $ y \in Y $.}
    \end{equation*}
\end{Definition}

\begin{Definition}
    \label{anisotropic npp}
    Suppose $K \subseteq X$ is closed and $\anp{K}{\phi} : X \to \powerset{K}$
    is the $\phi$-nearest point projection onto $K$ characterised
    by~\eqref{eq:def-xi}.
    \emph{The Cahn-Hoffman map of $K$ associated to $\phi$} is the multivalued
    map~$\an{K}{\phi} : X \without K \to \powerset{\Bdry \cballF{\phi}{0}{1}}$ defined by the formula
    \begin{displaymath}
        \an{K}{\phi}(x) = \da{K}{\phi}(x)^{-1}(x - \anp{K}{\phi}(x)) \quad
        \text{for $x \in X \without K$} \,.
    \end{displaymath}
\end{Definition}

\begin{Remark}
    \label{closedness of anp(x)}
    It will be useful to notice that $\anp{K}{\phi}(x)$ is a compact subset
    of~$X$ for every $x \in X$.
\end{Remark}

\begin{Remark}
    Since $\phi$ is a norm, one readily checks that \emph{if $ a \in K $, $ v
      \in X $ and $ \da{K}{\phi}(a+v) = \phi(v) $, then $ \da{K}{\phi}(a+tv) = t
      \phi(v) $ for every $ 0 \leq t \leq 1 $.}
\end{Remark}

\begin{Remark}
    \label{single-valuedness_of_anp} It has been observed in
    \cite[2.38(g)]{DRKS2020ARMA}, using strict convexity of~$\phi$, that
    \emph{if $ a \in K $, $ u \in \Bdry \oballF{\phi}{0}{1} $,
      $ 0 < t < \infty $ and $ \da{K}{\phi}(a+tu) = t $, then
      $ \anp{K}{\phi}(a+su) $ is a singleton and $\anp{K}{\phi}(a+su) = \{a\}$
      for every $ 0 < s <t $.}
\end{Remark}

\begin{Definition}[\protect{cf.\ \cite[p.\ 213]{MR0274683}}]
    Let $f : X \to \overline{\R}$ and $x,v \in X$. The \emph{one-sided
      directional derivative of $f$ at $x$ with respect to $v$} is defined to be
    \begin{displaymath}
        f'(x;v) = \lim_{\lambda \to 0+}\frac{f(x+\lambda v) - f(x)}{\lambda} \,,
    \end{displaymath} whenever the limit exists in $\overline{\R}$.
\end{Definition}

\begin{Remark}
    If $f$ is a convex function and $x$ is a~point with $f(x) \in \R$, then $
    f'(x;v) $ exists for every $ v \in X $; cf.~\cite[Theorem 23.1]{MR0274683}.
\end{Remark}

\begin{Definition}[\protect{cf.~\cite[p.\ 214-215 and Theorem 23.2]{MR0274683}}]
    \label{def:subgradient}
    Suppose $f : X \to \overline{\R}$ is convex and $x \in X$ is such that
    $f(x) \in \R$. We say that $ \zeta \in X $ is a \emph{subgradient of $f$ at
      $x$} if
    \begin{displaymath}
        f'(x;v) \geq \zeta \bullet v \quad \text{for $ v \in X $.}
    \end{displaymath}
    The set of all subgradients of $f$ at $x$ is denoted by $\subgr f(x)$.
\end{Definition}

\begin{Remark}
    \label{rem:subgradients}
    Since the symbol ``$\Bdry{}$'' is used in this paper for the topological
    boundary of a set and, on grounds of set theory, functions are sets it would
    introduce ambiguities if we used the standard notation ``$\partial f$'' for
    the subgradient mapping of~$f$; hence, we decided to denote it
    ``$\subgr f$''.
\end{Remark}

In the next definition we use the notion of a~\emph{polynomial function} which
is formally defined in~\cite[1.10.4]{Federer1969}.

\begin{Definition}
    \label{def:pt-diff}
    Let $X$, $Y$ be normed vectorspaces and $f$ be a function mapping a subset
    of $X$ into $Y$. We say that \emph{$f$ is pointwise differentiable of order
      $k$ at $x$} if there exist: an open set $U \subseteq X$ such that $x \in U
    \subseteq \dmn f$ and a polynomial function $P : X \to Y$ of degree at most
    $k$ such that $f(x) = P(x)$ and
    \begin{displaymath}
        \lim_{y \to x}\frac{|f(y) - P(y)|}{|y-x|^k} = 0 \,.
    \end{displaymath} Whenever this holds $P$ is unique and the \emph{pointwise
      differential of
      order $i$ of $f$ at $x$}, for $i = 1, \ldots ,k$, is defined by $\pt\Der^i
    f(x) = \Der^i P(x) $. As usual $\pt \Der^1 f(x) = \pt\Der f(x)$.
\end{Definition}

\begin{Remark}
    \label{rem:pt-diff1}
    The notion of pointwise differentiability of order $1$ coincides with the
    classical notion of differentiability so $\pt\Der = \Der$;
    cf.~\cite[3.1]{Federer1969}. A summary of known facts about pointwise
    differentiability for functions can be found, e.g.,
    in~\cite[\S{2}]{Menne2019}.
\end{Remark}

\begin{Remark}
    \label{non emptyness of the subdifferential} If $f$ is a $\R$-valued convex
    function on an open subset $U$ of $X$ then $\subgr f(x)$ is non empty for
    every $x \in U$; cf.~\cite[Theorem~23.4]{MR0274683}. Moreover, $f$ is
    differentiable of order~$1$ at~$x$ if and only if $\subgr f(x)$ is a
    singleton; cf.~\cite[25.1]{MR0274683}.
\end{Remark}

We need to extend the concept of continuity and differentiability to multivalued
maps.

\begin{Definition}[\protect{cf.~\cite[Definition~2]{Zajicek1983diff}}]
    \label{def:multi-cont}
    Let $X$ and $Y$ be normed vectorspaces and $T$ be a~$Y$-mul\-ti\-val\-ued
    map defined on $X$. We say that \emph{$T$ is weakly continuous
      at $a \in X$} if and only if $T(a) \neq \varnothing $ and for each $\varepsilon > 0$
    there exists $\delta > 0$ such that
    \begin{displaymath}
        T(x) \subseteq T(a) + \oball{0}{\varepsilon} \quad \text{whenever $x \in
          $ and $|x-a| < \delta$} \,.
    \end{displaymath} If, additionally, $T(x)$ is a singleton, then we say that
    $T$ is \emph{continuous at $x$}.
\end{Definition}

\begin{Remark}
  We notice that if $T(y) = \varnothing$ for $y \in \cball{x}{\delta}
    \without \{x\}$ then $T$ is continuous at~$x$. On the other hand, we remark that studying the
    map $\anp{K}{\phi}$ we do not need to worry about such strange behaviour.
    Moreover, in~\ref{basic_properties_of_da_and_anp}\ref{basic_properties_of_da_and_anp_5}
    we prove that $\anp{K}{\phi}$ is weakly continuous on the whole of
    $\mathbf{R}^n$. Obviously, $\anp{K}{\phi}(x)$ is a~singleton for all $x \in X$ if and
    only if $K$ is convex.
\end{Remark}

\begin{Remark}
    Note that weakly continuous multivalued functions may carry connected sets
    into disconnected sets. Consider, e.g., the function $f : \R \to
    \powerset{\R}$ given by $f(t) = \{-1\}$ if $t < 0$, $f(t) = \{ 1 \}$ if $t >
    0$, and $f(0) = \{ -1 \,, 0 \,, 1 \}$; then, $f$ is weakly continuous in the
    sense of~\ref{def:multi-cont}. Another example is $\anp{K}{\phi}$ which is
    weakly continuous on the whole of~$\Real{\adim}$ regardless of the choice of
    the closed set~$K \subseteq \Real{\adim}$; in~particular, when $K$ is
    disconnected;
    cf.~\ref{basic_properties_of_da_and_anp}\ref{basic_properties_of_da_and_anp_5}.
\end{Remark}

\begin{Definition}[\protect{cf.~\cite[Definition~3]{Zajicek1983diff}}]
    \label{def:multi-diff}
    Let $X$, $Y$ be finite dimensional normed vectorspaces and $T$ be
    a~$Y$-multivalued map defined on $X$. We~say that \emph{$T$ is
      differentiable at $a \in X$} if and only if $T(a)$ is a singleton and
    there exists a~linear map $L : X \to Y$ such that for any $\varepsilon > 0$
    there exists $\delta > 0$ satisfying
    \begin{displaymath}
        | y - T(a) - L(x-a)| \leq \varepsilon |x-a|
        \quad \text{whenever $|x-a| \le \delta$ and $y \in T(x)$} \,.
    \end{displaymath} The set of all such $L$ is denoted by~$\Der T(a)$. In case
    $\Der T(a)$ is a~singleton, we say that $T$ is \emph{strongly differentiable
      at~$a$}.
\end{Definition}

\begin{Remark}
    \label{rem:multi-diff} Note that it might happen that $T(y) = \varnothing$
    for some $y \in \cball{x}{\delta}$. Actually, if $T(x) \ne \varnothing$ and
    there exists $\delta > 0$ such that $T(y) = \varnothing$ for $y \in
    \cball{x}{\delta} \without \{x\}$, then $T$ is differentiable at~$x$ with
    $\Der T(x) = \Hom(X,Y)$. On the other hand if, e.g., $\dim X = \adim$ and
    $\density^{\adim}(\Leb{\adim} \restrict \{ y : T(y) = \varnothing \}, x) =
    0$, then $\Der T(x)$ is a singleton.
\end{Remark}

\begin{Remark}\label{composition of multivalued functions}
    Let $ P $ and $ Q $ be two multivalued functions and $ x \in \Real{\adim}
    $. If $ P $ is differentiable at $ x $ and $ Q $ is differentiable at $ P(x)
    $ then the multivalued function $ R $ given by
    \begin{displaymath}
        R(y) = Q[P(y)] = {\textstyle \bigcup}\{ Q(w) : w \in P(y) \} \quad
        \text{for $ y \in \Real{\adim} $} \,,
    \end{displaymath} is differentiable at $x$.
\end{Remark}

\begin{Definition}\label{def: arho}
    \label{anisotropic_rho}
    Let $ K \subseteq \Real{\adim} $ be closed. For $x \in \Real{\adim}$ define $ \arho{K}{\phi}: \mathbf{R}^n \rightarrow \overline{\mathbf{R}} \cap \{t : 1 \leq t \leq \infty\} $ as
    \begin{displaymath}
        \arho{K}{\phi}(x)
        = \sup \R \cap \bigl\{ s : \da{K}{\phi}(a + s(x-a)) = s \da{K}{\phi}(x) \bigr\} 
        \end{displaymath}
    whenever $ x \in \mathbf{R}^n $ and $ a \in \anp{K}{\phi}(x) $.
\end{Definition}

\begin{Remark}\label{rho well definition}
    Definition~\ref{anisotropic_rho} is well posed,
    since~\ref{single-valuedness_of_anp} gives that if $ \anp{K}{\phi}(x) $ is
    not a~singleton, then
    \begin{displaymath}
        \sup \bigl\{ s : \da{K}{\phi}(a + s(x-a)) = s \da{K}{\phi}(x) \bigr\} = 1
        \quad \text{for every $ a \in \anp{K}{\phi}(x)$} \,.
    \end{displaymath}
\end{Remark}

\noindent The following Lemma will be used in section \ref{sec:twice-diff}.

\begin{Lemma}\label{rem:rho-usc}
    For every closed set $ K \subseteq \mathbf{R}^n $ the function $
    \arho{K}{\phi} $ is upper semicontinuous and satisfies
    \begin{equation*}
        \arho{K}{\phi}(x) = t \arho{K}{\phi}(a + t (x-a))
        \qquad \textrm{for $ x \in \mathbf{R}^{n+1} $, $ a \in \anp{K}{\phi}(x) $ and $ 0 < t \leq \arho{K}{\phi}(x) $} \,.
    \end{equation*}
    Moreover, $ \Cut^\phi(K) = \mathbf{R}^n \cap \{x : \arho{K}{\phi}(x) = 1\} $.
\end{Lemma}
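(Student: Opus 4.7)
The plan is to reduce all three claims to a~single formula linking $\arho{K}{\phi}$ to the reach function $\ar{K}{\phi}$. For $x \in \R^{\adim}\without K$ and any $a \in \anp{K}{\phi}(x)$, set $\eta := \da{K}{\phi}(x)^{-1}(x-a)$, so that $\phi(\eta) = 1$ and $(a,\eta) \in N^{\phi}(K)$. The substitution $r = s\,\da{K}{\phi}(x)$ converts the condition $\da{K}{\phi}(a + s(x-a)) = s\,\da{K}{\phi}(x)$ appearing in Definition~\ref{def: arho} into $\da{K}{\phi}(a+r\eta) = r$, hence
\begin{equation*}
\arho{K}{\phi}(x) = \da{K}{\phi}(x)^{-1}\, \ar{K}{\phi}(a,\eta).
\end{equation*}
By Remark~\ref{rho well definition} this expression is independent of $a \in \anp{K}{\phi}(x)$; for $x \in K$ Definition~\ref{def: arho} trivially forces $\arho{K}{\phi}(x) = +\infty$, since then $a = x$ and the defining condition holds for every $s$.

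From this formula the identification of the cut locus is immediate. If $x \in K$, then $\arho{K}{\phi}(x) = +\infty \neq 1$ and $x \notin \Cut^{\phi}(K)$ because $\ar{K}{\phi} > 0$; for $x \notin K$, $\arho{K}{\phi}(x) = 1$ is equivalent to $\ar{K}{\phi}(a,\eta) = \da{K}{\phi}(x)$, equivalently to $x = a + \ar{K}{\phi}(a,\eta)\eta \in \Cut^{\phi}(K)$. The scaling identity is similarly direct: for $0 < t \leq \arho{K}{\phi}(x)$ put $y := a + t(x-a)$; continuity of $\da{K}{\phi}$ makes the supremum in Definition~\ref{def: arho} attained, so $\da{K}{\phi}(y) = t\,\da{K}{\phi}(x)$. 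Then $\phi(y-a) = \da{K}{\phi}(y)$ yields $a \in \anp{K}{\phi}(y)$ with the $\phi$-unit vector from $a$ to $y$ again equal to $\eta$, and applying the formula of the previous paragraph at $y$ gives $\arho{K}{\phi}(y) = \ar{K}{\phi}(a,\eta)/\da{K}{\phi}(y) = \arho{K}{\phi}(x)/t$.

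Upper semicontinuity is trivial at points of $K$. Fix $x \notin K$ and a sequence $x_k \to x$; then $x_k \notin K$ for $k$ large and $\da{K}{\phi}(x_k) \to \da{K}{\phi}(x) > 0$ by continuity of $\da{K}{\phi}$. Select $a_k \in \anp{K}{\phi}(x_k)$; by weak continuity of $\anp{K}{\phi}$ (Lemma~\ref{basic_properties_of_da_and_anp}\ref{basic_properties_of_da_and_anp_5}) and compactness of $\anp{K}{\phi}(x)$ (Remark~\ref{closedness of anp(x)}), after passing to a subsequence $a_k \to a$ for some $a \in \anp{K}{\phi}(x)$, and consequently $\eta_k := \da{K}{\phi}(x_k)^{-1}(x_k-a_k) \to \eta := \da{K}{\phi}(x)^{-1}(x-a)$. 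The upper semicontinuity of $\ar{K}{\phi}$ (Remark~\ref{rem:rKphi-usc}), combined with the formula of the first paragraph, gives $\limsup_k \arho{K}{\phi}(x_k) \leq \arho{K}{\phi}(x)$. The one subtle point is that the limit $a$ depends on the chosen subsequence, but this is harmless because the first paragraph shows $\arho{K}{\phi}(x)$ has the same value for every $a \in \anp{K}{\phi}(x)$.
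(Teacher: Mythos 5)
Your proof is correct but takes a genuinely different route from the paper's. The paper argues directly: for upper semicontinuity it takes $x_i \to x_0$ with $\lim_i \arho{K}{\phi}(x_i) > \beta$, extracts a convergent subsequence $a_i \to a_0 \in \anp{K}{\phi}(x_0)$ and passes to the limit in the identity $\da{K}{\phi}(a_i + \beta(x_i - a_i)) = \beta\,\da{K}{\phi}(x_i)$ to conclude $\arho{K}{\phi}(x_0) \ge \beta$, and it proves the scaling identity by establishing the two opposite inequalities separately. You instead first record the conversion formula $\arho{K}{\phi}(x) = \da{K}{\phi}(x)^{-1}\ar{K}{\phi}(a,\eta)$ and deduce all three claims from it, importing the upper semicontinuity of $\ar{K}{\phi}$ from Lemma~\ref{rem:rKphi-usc}. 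Note that your conversion formula is precisely the second assertion of that lemma, which the paper states and proves \emph{after} Lemma~\ref{rem:rho-usc}; there is no circularity, because the paper's proof of upper semicontinuity of $\ar{K}{\phi}$ is a self-contained compactness argument that does not use Lemma~\ref{rem:rho-usc}, but your write-up implicitly requires this reordering and should say so. Two small points of care: the step ``$\da{K}{\phi}(y) = t\,\da{K}{\phi}(x)$ for all $0 < t \le \arho{K}{\phi}(x)$'' uses not only continuity but also the homogeneity remark preceding~\ref{single-valuedness_of_anp}, which makes the set of admissible $s$ in Definition~\ref{def: arho} an interval containing $0$ (so every $t$ strictly below the supremum already belongs to it, and continuity is needed only at the endpoint); and in the cut-locus equivalence, the inclusion $\Cut^{\phi}(K) \subseteq \{x : \arho{K}{\phi}(x) = 1\}$ needs the observation that the supremum defining $\ar{K}{\phi}(a,\eta)$ is attained, so that $a$ is indeed a nearest point of $x = a + \ar{K}{\phi}(a,\eta)\eta$ and your formula applies to that pair. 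As for what each approach buys: yours unifies the three claims under a single identity and avoids repeating a compactness argument already done for $\ar{K}{\phi}$, while the paper's keeps the two semicontinuity lemmas logically independent and in their stated order.
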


\begin{proof}
    Let $x_0, x_1, x_2, \ldots \in \R^{\adim}$ and $\beta \in \R$ be such that
    $\lim_{i \to \infty} x_i = x_0$, $\phi(x_i - x_0) < 1$ for $i \in \natp$, and
    $\lim_{i \to \infty} \arho{K}{\phi}(x_i) > \beta$. Since $\da{K}{\phi}$ is
    continuous we have
    $\lim_{i \to \infty} \da{K}{\phi}(x_i) = \da{K}{\phi}(x_0)$ and we may
    assume $\da{K}{\phi}(x_i) < \da{K}{\phi}(x_0) + 1$ for $i \in \natp$.
    Choose $a_i \in \anp{K}{\phi}(x_i)$ for $i \in \natp$. Since
    $\{a_i : i \in \natp \} \subseteq \cballF{\phi}{x_0}{\da{K}{\phi}(x_0) + 2}$
    we may, possibly choosing a subsequence, assume that
    $\lim_{i \to \infty} a_i = a_0$ and then $a_0 \in \anp{K}{\phi}(x_0)$ by
    continuity of both~$\da{K}{\phi}$ and~$\phi$. Assume further that
    $\arho{K}{\phi}(x_i) \ge \beta$ for $i \in \natp$. Recalling the definition
    of $\arho{K}{\phi}$ we~obtain
    \begin{displaymath}
        \da{K}{\phi}(a_0+\beta(x_0-a_0))
        = \lim_{i \to \infty} \da{K}{\phi}(a_i+\beta(x_i-a_i))
        = \lim_{i \to \infty} \beta \da{K}{\phi}(x_i)
        = \beta \da{K}{\phi}(x_0) \,;
    \end{displaymath}
    hence, $\arho{K}{\phi}(x_0) \ge \beta$. Since this holds for any $\beta \in
    \R$ satisfying $\lim_{i \to \infty} \arho{K}{\phi}(x_i) > \beta$, we see
    that $\lim_{i \to \infty} \arho{K}{\phi}(x_i) \le \arho{K}{\phi}(x_0)$ and
    we conclude that $ \arho{K}{\phi} $ is upper semicontinuous.
    
    Suppose $ x \in \mathbf{R}^{n+1} $, $ a \in \anp{K}{\phi}(x) $ and $ 0 < t
    \leq \arho{K}{\phi}(x) $ and we prove that $\arho{K}{\phi}(x) = t
    \arho{K}{\phi}(a + t (x-a)) $. Evidently, if $\arho{K}{\phi}(x) = \infty$,
    then $\arho{K}{\phi}(a + t (x-a)) = \infty$ for all $0 < t < \infty$ and the
    assertion~is~true. Therefore, we assume $1 \le \arho{K}{\phi}(x) < \infty$
    and define $ y = a + t(x-a) $. Notice that $ \da{K}{\phi}(y) = t
    \da{K}{\phi}(x) $ and $ a \in \anp{K}{\phi}(y) $. Since
    \begin{gather}
        a + \frac{\arho{K}{\phi}(x)}{t}(y-a) = a + \arho{K}{\phi}(x)(x-a) \\
        \text{we have} \quad
        \da{K}{\phi} \big( a + \tfrac{\arho{K}{\phi}(x)}{t}(y-a) \big)
        = \arho{K}{\phi}(x)\da{K}{\phi}(x)
        = \tfrac{\arho{K}{\phi}(x)}{t}\da{K}{\phi}(y) \,; \\
        \label{rem:rho-usc eq}        
        \text{hence,} \quad
        t \arho{K}{\phi}(y) \geq \arho{K}{\phi}(x) \,.
    \end{gather}
    Noting that $x = a + \frac{1}{t}(y-a)$, $ \arho{K}{\phi}(y) \geq \frac{1}{t}
    $, and $ a \in \anp{K}{\phi}(y) $, we can apply the inequality in
    \eqref{rem:rho-usc eq}, replacing $ x$ and $ t $ with $ y $ and $
    \frac{1}{t} $ respectively, to obtain the reverse inequality; hence, equality.
    
    Finally the assertion about the cut locus follows directly from the
    definition of $ \arho{K}{\phi} $.
\end{proof}

\begin{Lemma}\label{lem: dilation homeomorphism}
    Suppose $ K \subseteq \R^n $ is closed, $ \sigma > 1 $, $ K_\sigma = \{ x :
    \arho{K}{\phi}(x) \geq \sigma\} \sim K $ and the $ \mathbf{R}^n
    $-multivalued function $ h_t $ is defined as
    \begin{gather}
        h_t(y) = ty + (1-t) \anp{K}{\phi}(y) \quad \text{for $y \in \Real{\adim}$ and $ t \in \mathbf{R} $}.
    \end{gather}
    Then the map $ h_t|K_\sigma $ is a homeomorphism onto $ K_{\sigma/t} $ with
    $ (h_t|K_\sigma)^{-1} = h_{1/t}| K_{\sigma/t} $ for every $ 0 < t < \sigma$.
\end{Lemma}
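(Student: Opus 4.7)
The setup is that for $y \in K_\sigma$ with $\sigma > 1$, the last part of Lemma \ref{rem:rho-usc} combined with Remark \ref{single-valuedness_of_anp} forces $\anp{K}{\phi}(y)$ to be a singleton: indeed $\arho{K}{\phi}(y) \ge \sigma > 1$ means $y \notin \Cut^{\phi}(K)$, and if we write $a \in \anp{K}{\phi}(y)$ and $u = (y-a)/\da{K}{\phi}(y)$, then $\da{K}{\phi}(a + \sigma \da{K}{\phi}(y) u) = \sigma\da{K}{\phi}(y)$, so \ref{single-valuedness_of_anp} applied with $s = \da{K}{\phi}(y)$ gives $\anp{K}{\phi}(y) = \{a\}$. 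Consequently $h_t$ is single-valued on $K_\sigma$ and $h_t(y) = a + t(y-a)$ lies on the segment from $a$ to $a + \arho{K}{\phi}(y)\da{K}{\phi}(y)u$, whose endpoint is strictly further from $a$ than $h_t(y)$ since $0 < t < \sigma \leq \arho{K}{\phi}(y)$.

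Next I would show $h_t\lIm K_\sigma \rIm \subseteq K_{\sigma/t}$. Fix $y \in K_\sigma$ and set $a = \anp{K}{\phi}(y)$, $z = h_t(y) = a + t(y-a)$. Applying \ref{single-valuedness_of_anp} once more on the segment in the previous paragraph gives $\anp{K}{\phi}(z) = \{a\}$, and in particular $\da{K}{\phi}(z) = \phi(z-a) = t\da{K}{\phi}(y) > 0$, so $z \notin K$. To bound $\arho{K}{\phi}(z)$ from below I would invoke the identity
\begin{equation*}
    \arho{K}{\phi}(y) = t\,\arho{K}{\phi}(a + t(y-a)) = t\,\arho{K}{\phi}(z)
\end{equation*}
from Lemma \ref{rem:rho-usc}, which is legitimate because $0 < t \le \arho{K}{\phi}(y)$. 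This gives $\arho{K}{\phi}(z) = \arho{K}{\phi}(y)/t \ge \sigma/t$ and hence $z \in K_{\sigma/t}$.

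For the inverse, I would carry out the same argument with $1/t$ and the roles of $K_\sigma$, $K_{\sigma/t}$ swapped: since $\sigma/t > 1$ and $0 < 1/t < \sigma/t$, the previous step shows $h_{1/t}$ maps $K_{\sigma/t}$ into $K_{(\sigma/t)/(1/t)} = K_\sigma$. The composition $h_{1/t}\circ h_t$ on $K_\sigma$ is computed directly using $\anp{K}{\phi}(h_t(y)) = a$:
\begin{equation*}
    h_{1/t}(h_t(y)) = \tfrac{1}{t}\bigl(a + t(y-a)\bigr) + \bigl(1 - \tfrac{1}{t}\bigr) a = y \,,
\end{equation*}
and symmetrically $h_t \circ h_{1/t} = \id{K_{\sigma/t}}$. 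Thus $h_t | K_\sigma$ and $h_{1/t}|K_{\sigma/t}$ are mutually inverse bijections.

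Continuity in both directions is the only remaining point. Since $\anp{K}{\phi}$ is single-valued on $K_\sigma$ (resp.\ on $K_{\sigma/t}$), its weak continuity from \ref{basic_properties_of_da_and_anp}\ref{basic_properties_of_da_and_anp_5} degenerates to ordinary continuity there; as $h_t$ is an affine combination of $\id{\R^{\adim}}$ and $\anp{K}{\phi}$, it is continuous on $K_\sigma$, and analogously $h_{1/t}$ is continuous on $K_{\sigma/t}$. The only real subtlety I foresee is being disciplined about where $\anp{K}{\phi}$ is single-valued, which is cleanly handled once one notes that $K_\sigma \subseteq \R^{\adim} \setminus (K \cup \Cut^{\phi}(K))$ whenever $\sigma > 1$.
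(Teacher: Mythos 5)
Your proposal is correct and follows essentially the same route as the paper's proof: the scaling identity $t\,\arho{K}{\phi}(h_t(y)) = \arho{K}{\phi}(y)$ from Lemma~\ref{rem:rho-usc} gives $h_t\lIm K_\sigma\rIm \subseteq K_{\sigma/t}$, single-valuedness on $K_\sigma$ comes from \ref{single-valuedness_of_anp}, and the inverse is obtained by rerunning the argument with $1/t$ and $\sigma/t$. The only difference is cosmetic: you spell out the continuity of $h_t$ and $h_{1/t}$ (via weak continuity of $\anp{K}{\phi}$ degenerating to continuity where it is single-valued), a point the paper leaves implicit.
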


\begin{proof}
    Since $ t \arho{K}{\phi}(h_t(x)) = \arho{K}{\phi}(x) \geq \sigma $ for every
    $ x \in K_\sigma $ by Lemma~\ref{rem:rho-usc}, we get that $ h_t[K_\sigma]
    \subseteq K_{\sigma/t} $. Let $ y \in K_{\sigma/t} $ and define $ x =
    h_{1/t}(y)$. Since $\anp{K}{\phi}(y)$ is a singleton we can write $x =
    \anp{K}{\phi}(y) + \frac{1}{t}(y - \anp{K}{\phi}(y)) $. Notice that $
    \anp{K}{\phi}(x) = \anp{K}{\phi}(y) $ and $\frac{1}{t}\arho{K}{\phi}(x) =
    \arho{K}{\phi}(y) \geq \frac{\sigma}{t} $, again by
    Lemma~\ref{rem:rho-usc}. We conclude that $ x \in K_\sigma $ and, by a
    direct computation, $ h_t(x) = y $. It follows that $ h_t \circ h_{1/t} =
    \mathbf{I}_{K_{\sigma/t}} $ and $ h_t[K_\sigma] = K_{\sigma/t} $.
    
    Since $ 0 < \frac{1}{t} < \frac{\sigma}{t} $ we apply the statement proved
    in the last paragraph with $ t $ and $ \sigma $ replaced by $ \frac{1}{t} $
    and $ \frac{\sigma}{t} $ respectively to infer that $ h_{1/t} \circ h_{t} =
    \mathbf{I}_{K_\sigma} $ and $ h_{1/t}[K_{\sigma/t}] = K_\sigma $. This
    proves that $ h_t|K_\sigma $ is an homeomorphism onto $ K_{\sigma/t} $.
\end{proof}

The next lemma provides an alternative description of the normal
bundle~$N^\phi(K)$ defined in~\eqref{eq:def-N-phi} and the reach function
defined in~\eqref{eq:def-r-phi}.
\begin{Lemma}
    \label{rem:rKphi-usc}
    For every closed set $ K \subseteq \mathbf{R}^n $ the function
    $\ar{K}{\phi}: N^\phi(K) \rightarrow \mathbf{R} \cap \{t : 0 < t \leq
    \infty\}$ is upper semicontinuous. Moreover,
      \begin{equation*}
          N^\phi(K) = \bigl\{ (\anp{K}{\phi}(x), \an{K}{\phi}(x)): x \in \mathbf{R}^n \sim (K \cup \Cut^\phi(K)) \bigr\}
      \end{equation*}
      and 
      \begin{equation*}
          \ar{K}{\phi}(\anp{K}{\phi}(x), \an{K}{\phi}(x))
          = \da{K}{\phi}(x) \arho{K}{\phi}(x)
          \qquad \textrm{for all $ x \in \Real{n} \sim (K \cup \Cut^\phi(K)) $} \,.
      \end{equation*}
\end{Lemma}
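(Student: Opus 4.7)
The proof splits into the three declared parts, and each one is essentially a translation of facts already established for $\arho{K}{\phi}$ and $\anp{K}{\phi}$ into the ``intrinsic'' coordinates $(a,\eta)$ on~$N^\phi(K)$.

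\textbf{Upper semicontinuity.} The plan is to mimic the argument in Lemma~\ref{rem:rho-usc}. Take a sequence $(a_i,\eta_i)\in N^\phi(K)$ converging to $(a_0,\eta_0)\in N^\phi(K)$ and $\beta\in\R$ with $\beta<\liminf_i \ar{K}{\phi}(a_i,\eta_i)$. The definition of $\ar{K}{\phi}$ as a supremum, combined with the Remark stating that $\da{K}{\phi}(a+tv)=t\phi(v)$ for all $0\le t\le 1$ whenever $\da{K}{\phi}(a+v)=\phi(v)$, yields $\da{K}{\phi}(a_i+\beta\eta_i)=\beta$ for all sufficiently large~$i$. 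Passing to the limit and invoking continuity of $\da{K}{\phi}$ and of $\phi$ we obtain $\da{K}{\phi}(a_0+\beta\eta_0)=\beta$, hence $\ar{K}{\phi}(a_0,\eta_0)\ge\beta$. Letting $\beta\uparrow\liminf$ finishes this part.

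\textbf{Description of $N^\phi(K)$.} I will prove the two inclusions. For ``$\supseteq$'', pick $x\in\R^{\adim}\without(K\cup\Cut^\phi(K))$. Since $\Sigma^\phi(K)\subseteq\Cut^\phi(K)$ (established earlier in the introduction), $\anp{K}{\phi}(x)$ is a singleton; set $a=\anp{K}{\phi}(x)$ and $\eta=\an{K}{\phi}(x)$, so $\phi(\eta)=1$ and $\da{K}{\phi}(a+\da{K}{\phi}(x)\eta)=\da{K}{\phi}(x)>0$, which is exactly the defining condition for $(a,\eta)\in N^\phi(K)$. For ``$\subseteq$'', start with $(a,\eta)\in N^\phi(K)$, take any $0<s<\ar{K}{\phi}(a,\eta)$ (such $s$ exists because $\ar{K}{\phi}(a,\eta)>0$ by definition of $N^\phi(K)$), and put $x=a+s\eta$. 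Then $\da{K}{\phi}(x)=s$; Remark~\ref{single-valuedness_of_anp} gives $\anp{K}{\phi}(x)=\{a\}$, so $\an{K}{\phi}(x)=\eta$; and by Lemma~\ref{rem:rho-usc} and the relation $\ar{K}{\phi}(a,\eta)>s$ we get $\arho{K}{\phi}(x)>1$, so $x\notin\Cut^\phi(K)$. Thus $(a,\eta)=(\anp{K}{\phi}(x),\an{K}{\phi}(x))$ lies in the right-hand set.

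\textbf{The product formula.} For $x\in\R^{\adim}\without(K\cup\Cut^\phi(K))$, write $a=\anp{K}{\phi}(x)$ and $\eta=\an{K}{\phi}(x)$, so that $x-a=\da{K}{\phi}(x)\eta$. The change of variables $t=s\,\da{K}{\phi}(x)$ transforms the condition $\da{K}{\phi}(a+s(x-a))=s\,\da{K}{\phi}(x)$ in the definition of $\arho{K}{\phi}(x)$ into $\da{K}{\phi}(a+t\eta)=t$, which is the condition defining $\ar{K}{\phi}(a,\eta)$. Taking suprema yields $\arho{K}{\phi}(x)\,\da{K}{\phi}(x)=\ar{K}{\phi}(a,\eta)$, which is the asserted identity.

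\textbf{Main obstacle.} No step is genuinely deep; all three pieces reduce to the homogeneity of $\da{K}{\phi}$ along rays emanating from $K$ and to facts already recorded (Remarks~\ref{single-valuedness_of_anp} and the one-sided monotonicity remark, together with Lemma~\ref{rem:rho-usc}). The only point requiring a bit of care is the bookkeeping in the $\supseteq$ inclusion of part~(2): one must justify that $\anp{K}{\phi}(x)$ is a singleton, which is exactly where the inclusion $\Sigma^\phi(K)\subseteq\Cut^\phi(K)$ (recorded just after the definition of $\Cut^\phi(K)$) is invoked.
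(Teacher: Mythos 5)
Your proof is correct. For the upper semicontinuity you argue directly: from $\beta < \ar{K}{\phi}(a_i,\eta_i)$ you extract $\da{K}{\phi}(a_i+\beta\eta_i)=\beta$ via the ray-monotonicity remark and then pass to the limit using continuity of $\da{K}{\phi}$. The paper instead argues by contradiction: assuming $\ar{K}{\phi}(a,u)<s\le\ar{K}{\phi}(a_i,u_i)$, it picks a witness $b\in K$ with $\phi((a+su)-b)<s$ and derives $s<s$ from the triangle inequality for large~$i$. The two arguments are equivalent in substance — yours is essentially the paper's own proof of Lemma~\ref{rem:rho-usc} transplanted to $\ar{K}{\phi}$, and is arguably cleaner; the only cosmetic point is that, as written, bounding by a $\beta$ below the $\liminf$ only shows $\ar{K}{\phi}(a_0,\eta_0)\ge\liminf_i\ar{K}{\phi}(a_i,\eta_i)$, so one should first pass to a subsequence realising the $\limsup$ (the paper is equally informal here). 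For the remaining two assertions the paper gives no details (``the second part of the statement follows mechanically from the definitions''); your bookkeeping — single-valuedness of $\anp{K}{\phi}$ off $\Cut^{\phi}(K)$ for one inclusion, Remark~\ref{single-valuedness_of_anp} together with $\arho{K}{\phi}(a+s\eta)=s^{-1}\ar{K}{\phi}(a,\eta)>1$ for the reverse one, and the change of variables $t=s\,\da{K}{\phi}(x)$ for the product formula — is exactly the intended verification and is correct.
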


\begin{proof}
    Assume this is not true, so that for each $i
    \in \natp$ there is $(a_i,u_i) \in N^{\phi}(K)$ such that
    \begin{gather}
        \lim_{i \to \infty} a_i = a \in K \,,
        \quad
        \lim_{i \to \infty} u_i = u \in \sphere{\adim-1} \,,
        \quad
        (a,u) \in N^\phi(K) \,,
        \\
        \text{and} \quad
        \ar{K}{\phi}(a,u) < \lim_{i \to \infty} \ar{K}{\phi}(a_i,u_i) \,.
    \end{gather}
    Let $s \in \Real{}$ be such that
    \begin{displaymath}
        0 < \ar{K}{\phi}(a,u) < s \le \ar{K}{\phi}(a_i,u_i) \quad \text{for $i \in \natp$} \,.
    \end{displaymath}
    Since $\ar{K}{\phi}(a,u) < s$ we can find $b \in K$ such that $\phi((a+su) - b) <
    s$. Let $\varepsilon \in \Real{}$ be such that
    \begin{displaymath}
        0 < \varepsilon < s 
        \quad \text{and} \quad
        0 < \phi((a+su) - b) < s - \varepsilon \,.
    \end{displaymath}
    Let $i \in \natp$ be so big that $\phi(a_i - a) \le 2^{-3} \varepsilon$ and
    $\phi(u_i - u) \le 2^{-3} s^{-1} \varepsilon$. Then
    \begin{displaymath}
        \phi( (a_i + su_i) - (a + su) ) \le \phi(a_i - a) + s \phi(u_i - u) \le 2^{-2} \varepsilon \,.
    \end{displaymath}
    Since $\ar{K}{\phi}(a_i,u_i) \ge s$ we get a contradiction
    \begin{multline}
        s = \da{K}{\phi}(a_i + su_i)
        \le \phi( (a_i + su_i) - b )
        \\
        \le \phi( (a_i + su_i) - (a + su) ) + \phi( (a + su) - b )e
        \le 2^{-2} \varepsilon + s - \varepsilon < s \,.
    \end{multline}
    The second part of the statement follows mechanically from the definitions.
\end{proof}

\begin{Remark}
    Notice that in~\cite[Remark~5.6]{DRKS2020ARMA} we erroneously claim that
    $\ar{K}{\phi}$ is lower semicontinuous which is obviousy wrong but,
    fortunatelly, does not affect other results of~\cite{DRKS2020ARMA} since we
    only need the fact that $\ar{K}{\phi}$ is a Borel function there.
\end{Remark}

\begin{Remark}
    \label{rem:convex-hypersurface}
    The function $ \ar{K}{\phi} $ can fail to be continuous even if $ K $ is
    a~compact convex $ \mathcal{C}^{1,1} $ hypersurface.  In~fact
    in~\cite{MR4279967} we show that there exists a compact and convex
    $ \mathcal{C}^{1,1} $-hypersurface $ K $ such that $ \Clos(\Sigma(K)) $ has
    non empty interior. Noting that $ N(K) $ is the classical unit normal bundle
    of $ K $ and consequently it is compact, we~infer that if $\ar{K}{}$ was
    continuous then $ \Cut(K) $ would be compact; consequently
    $ \Clos(\Sigma(K)) = \Cut(K) $ and $ \Leb{\adim}(\Cut(K)) > 0 $ which is
    incompatible with Remark~\ref{Cut_Locus}.
\end{Remark}

\subsection{Auxiliary results}

The following lemma shows that if $A \subseteq \Real{\adim}$ is a~set of points
at which a~multivalued function~$f$ satisfies a~Lipschitz condition, $a$~is a
density points of~$A$, and $f|A$ is differentiable at~$a$, then $f$ is
differentiable at~$a$. It is a variant of a classical result stating that
a~Lipschitz function that is approximately differentiable at a~point is
classically differentiable at that point; cf.~\cite[3.1.5]{Federer1969}.

\begin{Lemma}
    \label{lem:lip+ap-diff=diff} Assume
    \begin{gather}
        a \in A \subseteq \Real{\adim} \,, \quad C \in \R \,, \quad f :
        \Real{\adim} \to \powerset{\Real{\adim}} \,, \quad
        \density^{\adim}(\Leb{\adim} \restrict (\Real{\adim} \without A), a) = 0
        \,, \\ \text{$f(b)$ is a singleton for $b \in A$} \,, \quad \text{$f|A$
          is differentiable at~$a$} \,, \\
        \label{eq:adl:lip}
        | f(b) - y | \le C | b - c |
        \quad \text{whenever $b \in A$, $c \in \Real{\adim}$, $y \in f(c)$} \,.
    \end{gather} Then $f$ is strongly differentiable at~$a$.
\end{Lemma}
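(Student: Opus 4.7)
The plan is to identify the candidate strong differential as $L := \Der(f|A)(a)$ and verify that it alone constitutes $\Der f(a)$. First note that $\Der(f|A)(a)$ is a singleton: on $A$, the function $f|A$ is single-valued, and since $\density^{\adim}(\Leb{\adim}\restrict(\Real{\adim}\without A),a) = 0$, Remark~\ref{rem:multi-diff} applies and the differential is uniquely determined.

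Fix $\varepsilon > 0$ and $x$ close to $a$. The heart of the argument is: given any $y \in f(x)$, produce a point $b \in A$ with $|b-x| \le \eta |x-a|$ for a small $\eta > 0$ chosen later, and compare $y$ to $f(a) + L(x-a)$ by routing through $f(b)$. To~locate such $b$ I would use the density hypothesis: the ball $\cball{x}{\eta|x-a|}$ sits inside $\cball{a}{(1+\eta)|x-a|}$, and if it were entirely disjoint from $A$, then
\begin{displaymath}
    \unitmeasure{\adim}(\eta |x-a|)^{\adim} \le \Leb{\adim}\bigl(\cball{a}{(1+\eta)|x-a|} \without A\bigr),
\end{displaymath}
contradicting $\density^{\adim}(\Leb{\adim}\restrict(\Real{\adim}\without A),a) = 0$ once $|x-a|$ is sufficiently small. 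Hence $A \cap \cball{x}{\eta|x-a|} \neq \varnothing$.

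Having fixed such $b$, three triangle-inequality estimates combine cleanly: the Lipschitz assumption~\eqref{eq:adl:lip} yields $|y - f(b)| \le C \eta |x-a|$; the differentiability of~$f|A$ at~$a$ yields $|f(b) - f(a) - L(b-a)| \le \varepsilon' |b-a| \le \varepsilon'(1+\eta)|x-a|$ for any preassigned $\varepsilon' > 0$, provided $|x-a|$ is small enough; and linearity of $L$ yields $|L(b-a) - L(x-a)| \le \|L\|\,\eta|x-a|$. Summing and choosing $\eta$ and $\varepsilon'$ small relative to $\varepsilon$ gives $|y - f(a) - L(x-a)| \le \varepsilon|x-a|$, which means $L \in \Der f(a)$.

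To upgrade this to strong differentiability, observe that if $L' \in \Der f(a)$ then restricting the defining condition of~\ref{def:multi-diff} to $x \in A$ (where $f$ and $f|A$ agree) shows $L' \in \Der(f|A)(a) = \{L\}$; hence $\Der f(a) = \{L\}$. The only mildly delicate step is the density-based selection of $b$ near $x$ avoiding the ``holes'' of~$A$ in every direction; once that is in hand, the rest is essentially the classical argument showing that a Lipschitz function that is approximately differentiable at a point is differentiable there, cf.~\cite[3.1.5]{Federer1969}.
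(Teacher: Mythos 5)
Your proposal is correct and follows essentially the same route as the paper's proof: identify $L$ as the (strong) differential of $f|A$ at the density point $a$, use the density hypothesis to find $b \in A$ very close to the test point, and combine the Lipschitz bound, the differentiability of $f|A$, and the linearity of $L$ in a three-term triangle inequality. The only cosmetic difference is that the paper takes $b$ to be the nearest point of $A$ and bounds $|c-b|$ directly by $\tfrac{1}{2}\varepsilon(\|L\|+C)^{-1}|c-a|$ via the measure estimate, whereas you first fix an auxiliary radius $\eta|x-a|$; both are instances of the same argument.
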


\begin{proof}
    Since $a$ is a density point of~$A$ we see that $f|A$ is strongly
    differentiable at~$a$ and $\uD f(a) = \{L\}$ for some $L \in
    \Hom(\Real{\adim},\Real{\adim})$; cf.~\ref{rem:multi-diff}.
    Let~$\varepsilon > 0$. Choose $0 < \delta < \varepsilon$ such that
    \begin{gather}
        \Leb{\adim}(\cball ar \without A) < \varepsilon^{\adim} 4^{-\adim}
        (\|L\|+C)^{-\adim} \unitmeasure{\adim} r^{\adim} \quad \text{whenever $0
          < r \le 2\delta$} \,, \\
        | f(b) - f(a) - L(b-a) | < \tfrac 12 \varepsilon |b-a|
        \quad \text{whenever $b \in A \cap \cball{a}{2\delta}$} \,.
    \end{gather}
    Let $c \in \cball{a}{\delta}$ and $y \in f(c)$. Set $r = |c-a|$ and choose
    $b \in A$ such that $|c-b| = \da{A}{}(c) \le r$. Clearly
    $\cball{c}{|c-b|} \subseteq \cball{a}{2r}$ and
    $\measureball{\Leb{\adim}}{\cball{c}{|c-b|}} = \unitmeasure{\adim} |c-b|^{\adim}$;
    hence,
    \begin{displaymath}
        (\|L\|+C) |c-b| \le \tfrac 12 \varepsilon | c - a | \,.
    \end{displaymath} Since $b \in A$ we obtain
    \begin{multline}
        | y - f(a) - L(c-a) |
        \le | y - f(b) | + | f(b) - f(a) - L(b-a) | + | L (b - c) | \\ \le C | c
        - b | + \tfrac 12 \varepsilon | b - a | + \| L \| \cdot | c - b | \le
        \varepsilon |c-a| \,.  \qedhere
    \end{multline}
\end{proof}

The next lemma is a classical result in convex analysis.

\begin{Lemma}\label{Alexandrov theorem}
    If $ U \subseteq \Real{\adim} $ is an open convex set, $ f : U \to \Real{} $
    is a convex function and $ x \in U $, then the following three statements
    are equivalent.
    \begin{enumerate}
    \item
        \label{Alexandrov theorem:1}
        $f$ is pointwise differentiable of order~$2$ at~$x$.
    \item
        \label{Alexandrov theorem:2}
        The multivalued map $\subgr f$ is differentiable at $x$.
    \item
        \label{Alexandrov theorem:3}
        There is at least one function $g : U \to \Real{\adim}$ such that $g(y)
        \in \subgr f(y)$ for every $y \in U$ and $ g $ is differentiable at~$x$.
    \end{enumerate}
    If~\ref{Alexandrov theorem:1}, \ref{Alexandrov theorem:2},
    and~\ref{Alexandrov theorem:3} hold, then
    \begin{displaymath}
        \Der \subgr f(x)u \bullet v
        = \Der g(x)u \bullet v
        = \pt\Der^{2}f(x)(u,v)
        \quad \text{for $ u,v \in \Real{\adim} $} \,.
    \end{displaymath}
\end{Lemma}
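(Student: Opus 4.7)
The plan is to establish the chain $(2) \Rightarrow (3) \Rightarrow (1) \Rightarrow (2)$ together with the final identification of derivatives. The implication $(2) \Rightarrow (3)$ is immediate: a selection $g : U \to \Real{\adim}$ with $g(y) \in \subgr f(y)$ exists by~\ref{non emptyness of the subdifferential} and the axiom of choice; since differentiability of $\subgr f$ at $x$ forces $\subgr f(x)$ to be a singleton (see~\ref{def:multi-diff}), substituting $y = g(c) \in \subgr f(c)$ into the defining estimate of $\Der \subgr f(x)$ shows that $g$ is differentiable at $x$ with $\Der g(x) \in \Der \subgr f(x)$.

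For $(1) \Rightarrow (2)$: let $P$ be the second order Taylor polynomial of $f$ at $x$; its gradient at $x$ equals $\nabla f(x)$ and its symmetric quadratic part is $A = \pt\Der^2 f(x)$. Using $f(x + tu) + f(x - tu) \ge 2 f(x)$ combined with the Taylor expansion forces $A$ to be positive semidefinite. For $\varepsilon > 0$ pick $\delta > 0$ with $|f(z) - P(z)| \le \varepsilon |z - x|^2$ whenever $|z - x| \le 2\delta$. For $c \in \cball{x}{\delta}$, $y \in \subgr f(c)$, and unit $v$, the subgradient inequality $f(z) \ge f(c) + y \bullet (z - c)$ evaluated at $z = c \pm sv$ combined with the Taylor bound gives
\begin{displaymath}
    |s(y - \nabla P(c)) \bullet v| \le \tfrac{s^2}{2} Av \bullet v + 4\varepsilon (s^2 + |c - x|^2) \,.
\end{displaymath}
Choosing $s = \sqrt{\varepsilon}\,|c - x|$ and dividing by $s$ yields $|y - \nabla P(c)| \le C \sqrt{\varepsilon}\, |c - x|$ for $c$ close to $x$, where $C$ depends only on $\|A\|$; letting $\varepsilon \downarrow 0$ proves that $\subgr f$ is differentiable at $x$ with derivative $A$.

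For $(3) \Rightarrow (1)$: set $L = \Der g(x)$. For any $\zeta \in \subgr f(x)$ and $c$ near $x$, combining the subgradient inequalities at $x$ and $c$ gives $(g(c) - \zeta) \bullet (c - x) \ge 0$; inserting $g(c) = g(x) + L(c - x) + o(|c - x|)$ and taking $c = x + r u$ with $r \downarrow 0$ for an arbitrary direction $u$ forces $\zeta = g(x)$, so $\subgr f(x) = \{g(x)\}$. Being a selection of $\subgr f$, the map $g$ is cyclically monotone; applying this property to cycles of the form $y_i = x + \varepsilon z_i$ with $z_n = z_0$, dividing by $\varepsilon^2$, and letting $\varepsilon \downarrow 0$, one concludes that the linear map $z \mapsto Lz$ is itself cyclically monotone, hence $L$ is symmetric and positive semidefinite. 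Since $\phi(t) = f(x + t(y - x))$ is convex on $[0, 1]$ it is absolutely continuous, and since $\subgr f$ is single-valued at $\Leb{1}$-a.e.\ point of the segment (Alexandrov), where $g$ necessarily coincides with $\nabla f$, one obtains
\begin{displaymath}
    f(y) - f(x) = \int_0^1 g(x + t(y - x)) \bullet (y - x) \ud t \,.
\end{displaymath}
Substituting $g(x + t(y - x)) = g(x) + t L(y - x) + |y - x|\,\omega(t, y)$ with $\omega(t, y) \to 0$ as $y \to x$ uniformly in $t \in [0, 1]$, and integrating, yields the second order Taylor expansion
\begin{displaymath}
    f(y) = f(x) + g(x) \bullet (y - x) + \tfrac 12 L(y - x) \bullet (y - x) + o(|y - x|^2)
\end{displaymath}
as $y \to x$, which is pointwise twice differentiability of $f$ at $x$ with $\pt\Der^2 f(x)(u, v) = Lu \bullet v = \Der g(x) u \bullet v$.

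The most delicate step is the integral representation in $(3) \Rightarrow (1)$: although the selection $g$ need not be continuous along $[x, y]$, it coincides with $\nabla f$ at every point where $\subgr f$ is single-valued, which is $\Leb{1}$-a.e.\ on the segment by Alexandrov's theorem, and combined with absolute continuity of $\phi$ this legitimizes the fundamental theorem of calculus. The symmetry of $L$, needed for the final formula to be well posed, does not follow from differentiability of $g$ alone but is forced by cyclical monotonicity passed to the linearization.
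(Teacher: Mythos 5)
Your proof is correct in substance, but it takes a genuinely different (self-contained) route: the paper disposes of the lemma by citation, attributing \ref{Alexandrov theorem:1}$\Rightarrow$\ref{Alexandrov theorem:3} to Fitzpatrick via \cite[p.~495]{MR556619} and the remaining implications to Bangert \cite[4.3, 4.8]{MR534228} after matching the definitions of second-order differentiability. You instead write out the arguments: your \ref{Alexandrov theorem:1}$\Rightarrow$\ref{Alexandrov theorem:2} step (subgradient inequality at $c\pm sv$ against the Taylor bound, with the calibrated choice $s=\sqrt{\varepsilon}\,|c-x|$) is essentially Fitzpatrick's argument and is correct; \ref{Alexandrov theorem:2}$\Rightarrow$\ref{Alexandrov theorem:3} is indeed immediate from Definition~\ref{def:multi-diff}; and your \ref{Alexandrov theorem:3}$\Rightarrow$\ref{Alexandrov theorem:1} correctly extracts the singleton property of $\subgr f(x)$ from monotonicity, the symmetry of $L=\Der g(x)$ from cyclic monotonicity of the linearization, and the Taylor expansion from an integral representation. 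What your version buys is independence from the German-language source and an explicit identification of all three derivatives; what it costs is one shaky justification, noted below.

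The flawed step is the assertion that ``$\subgr f$ is single-valued at $\Leb{1}$-a.e.\ point of the segment (Alexandrov)''. Alexandrov's theorem is an $\Leb{\adim}$-a.e.\ statement and gives no information on a fixed segment; the claim is in fact false in general (take $f(z)=|z\bullet e_1|$ and a segment contained in the hyperplane $\{z : z\bullet e_1=0\}$, on which $\subgr f$ is nowhere single-valued). Fortunately the integral identity you need does not require it: writing $\varphi(t)=f(x+t(y-x))$, any selection $g$ of $\subgr f$ satisfies $g(x+t(y-x))\bullet(y-x)\in\subgr\varphi(t)$ for every $t$, and the one-dimensional convex function $\varphi$ is locally Lipschitz with $\subgr\varphi(t)$ a singleton equal to $\{\varphi'(t)\}$ for all but countably many $t$; hence $t\mapsto g(x+t(y-x))\bullet(y-x)$ agrees with $\varphi'$ outside a countable set and the fundamental theorem of calculus for the absolutely continuous $\varphi$ gives $f(y)-f(x)=\int_0^1 g(x+t(y-x))\bullet(y-x)\ud\Leb{1}(t)$. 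With this repair the rest of your argument, including the uniform substitution of the first-order expansion of $g$ under the integral and the resulting formula $\pt\Der^2f(x)(u,v)=\Der g(x)u\bullet v$, goes through.
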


\begin{proof}
    Clearly $\subgr f(y) \ne \varnothing$ for all $y \in U$ because $f$ is
    convex and~\ref{non emptyness of the subdifferential}. The proof that
    \ref{Alexandrov theorem:1} implies \ref{Alexandrov theorem:3} is contained
    in \cite[p. 495]{MR556619} (and attributed to Fitzpatrick). For the proof
    that \ref{Alexandrov theorem:3} implies \ref{Alexandrov theorem:2} and
    \ref{Alexandrov theorem:2} implies \ref{Alexandrov theorem:1}, one can look
    in \cite{MR534228}. In fact, first we notice that $ f $ is "zweimal
    differenzierbar in~$p$" in the sense of \cite[4.2]{MR534228} if and only if $
    \subgr f $ is differentiable at $ p $ in the sense of \ref{def:multi-diff};
    then we look at \cite[4.3]{MR534228} and \cite[4.8]{MR534228} respectively.
\end{proof}

\begin{Definition}
    \label{def:semiconcave} Suppose $ U \subseteq \Real{\adim}$ is open. We say
    that a function $ g : U \to \Real{} $ is \emph{semiconcave} if and only if
    there exists $\kappa \geq 0$ such that the function $g(y) - (\kappa/2)
    |y|^{2}$ is concave.
\end{Definition}

The following lemma collects few facts on the continuity, differentiability, and
convexity properties of $\da{K}{\phi}$ and $ \anp{K}{\phi} $ for an arbitrary
closed set~$K$.

\begin{Lemma}
    \label{basic_properties_of_da_and_anp}
    Let $ K \subseteq \Real{\adim} $ be a closed set. Then the following
    statements hold.
    \begin{enumerate}
    \item
        \label{basic_properties_of_da_and_anp_1} $(\da{K}{\phi})'(x;v) =
        \inf\ \bigl\{ \grad \phi(x-y) \bullet v : y \in \anp{K}{\phi}(x) \bigr\} $ for every
        $ v \in \Real{\adim} $ and $ x \in \Real{\adim} \without K $.
    \item
        \label{basic_properties_of_da_and_anp_2} For each $ x \in \Real{\adim}
        \without K $ there exists an open neighbourhood $ U \subseteq
        \Real{\adim} \without K $ of $ x $ such that $ \da{K}{\phi}|U $ is
        semiconcave.
    \item
        \label{basic_properties_of_da_and_anp_3} $ \da{K}{\phi} $ is
        differentiable at $ x \in \Real{\adim} \without K $ if and only if $
        \anp{K}{\phi}(x) $ is a singleton, in which case
        \begin{displaymath}
            \grad \da{K}{\phi}(x) = \grad \phi(x - \anp{K}{\phi}(x)) \,, \quad
            \anp{K}{\phi}(x) = x - \da{K}{\phi}(x)\grad \phi^{\ast}(\grad
            \da{K}{\phi}(x)) \,.
        \end{displaymath}
    \item
        \label{basic_properties_of_da_and_anp_4} If $ \da{K}{\phi} $ is
        differentiable at $ x \in \Real{\adim} \without K $ then $ \da{K}{\phi}
        $ is differentiable at $ \anp{K}{\phi}(x) + t(x-\anp{K}{\phi}(x)) $ for
        $ 0 < t < \arho{K}{\phi}(x) $ with
        \begin{displaymath}
            \grad \da{K}{\phi}(x) = \grad \da{K}{\phi} \bigl( \anp{K}{\phi}(x) +
            t(x-\anp{K}{\phi}(x)) \bigr) \,.
        \end{displaymath}
    \item
        \label{basic_properties_of_da_and_anp_6} $\da{K}{\phi}$ is pointwise
        differentiable of order $2$ at $x \in \Real{\adim} \without K$ if and
        only if $\anp{K}{\phi}$ is differentiable at~$x$ in the sense
        of~\ref{def:multi-diff}, in which case
        \begin{displaymath}
            \pt\Der^2\da{K}{\phi}(x)(u,v) = \Der(\grad \phi \circ
            \an{K}{\phi})(x)(u) \bullet v \quad \text{for $ u, v \in
              \Real{\adim} $} \,.
        \end{displaymath}
    \item
        \label{basic_properties_of_da_and_anp_5} $\anp{K}{\phi}$ is weakly
        continuous in the sense of~\ref{def:multi-cont}.
    \end{enumerate}
\end{Lemma}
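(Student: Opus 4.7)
The plan is to prove the six statements in the order (f), (a), (b), (c), (d), (e), reserving the main difficulty for~(e).

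For~(f) I~argue by compactness: if $x_k \to x_0$ and $y_k \in \anp{K}{\phi}(x_k)$, continuity of~$\da{K}{\phi}$ forces $\phi(x_k - y_k) \to \da{K}{\phi}(x_0)$, so $(y_k)$ stays bounded and every cluster point lies in~$\anp{K}{\phi}(x_0)$. Item~(a) splits into two inequalities: the upper bound follows from $\da{K}{\phi}(x + \lambda v) \le \phi(x + \lambda v - y)$ for arbitrary $y \in \anp{K}{\phi}(x)$ upon dividing by $\lambda > 0$ and passing to the limit; the lower bound is obtained by picking $\lambda_k \downarrow 0$ that realises $(\da{K}{\phi})'(x;v)$ together with $y_k \in \anp{K}{\phi}(x + \lambda_k v)$, extracting via~(f) a cluster point $y_\infty \in \anp{K}{\phi}(x)$, and applying the mean value theorem to $\phi(\cdot - y_k)$ using that $\phi$ is~$\cnt{1}$ near $x - y_\infty \neq 0$. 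For~(b) I~represent $\da{K}{\phi}$ locally as the infimum of the family $\phi(\cdot - y)$ with $y$ ranging in a bounded set on which $z - y$ stays away from the origin; there the second derivatives of $\phi(\cdot - y)$ are uniformly bounded, so the infimum is semiconcave.

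For~(c) I~use~(a) to see that $(\da{K}{\phi})'(x;\cdot)$ is the pointwise infimum of the linear forms $v \mapsto \grad \phi(x - y) \bullet v$ indexed by $y \in \anp{K}{\phi}(x)$; the injectivity of $\grad \phi|S$ from Remark~\ref{rem:phi-phiast} makes this infimum linear in~$v$ precisely when $\anp{K}{\phi}(x)$ is a singleton, and semiconcavity from~(b) reduces differentiability to linearity of~$(\da{K}{\phi})'(x;\cdot)$. The explicit formulas then follow from~(a), the identity $\grad \phi^\ast \circ \grad \phi = \id{S}$ of Remark~\ref{rem:phi-phiast}, and $0$-homogeneity of~$\grad \phi$. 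Item~(d) is immediate: setting $u = (x - a)/\da{K}{\phi}(x)$, Remark~\ref{single-valuedness_of_anp} gives $\anp{K}{\phi}(a + s u) = \{a\}$ for $0 < s < \arho{K}{\phi}(x) \da{K}{\phi}(x)$, so~(c) yields differentiability of~$\da{K}{\phi}$ at every such point and $0$-homogeneity of~$\grad \phi$ gives equality of the gradients.

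The main obstacle is~(e), which I~settle via Lemma~\ref{Alexandrov theorem}. Using~(b) I~pick $\kappa > 0$ large enough that $f := -\da{K}{\phi} + (\kappa/2)|\cdot|^2$ and, for every $y_0$, also $h_{y_0}(z) := -\phi(z - y_0) + (\kappa/2)|z|^2$ are convex on an open convex neighbourhood~$U$ of~$x$. For~$y \in U$ and $y_0 \in \anp{K}{\phi}(y)$ one has $h_{y_0} \le f$ on~$U$ with equality at~$y$, hence $-\grad \phi(y - y_0) + \kappa y = \grad h_{y_0}(y) \in \subgr f(y)$, so that for any selection $y \mapsto y_0(y) \in \anp{K}{\phi}(y)$ the map $g(y) := -\grad \phi(y - y_0(y)) + \kappa y$ is a selection of~$\subgr f$. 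If~$\anp{K}{\phi}$ is differentiable at~$x$ with $\anp{K}{\phi}(x) = \{a\}$, then Definition~\ref{def:multi-diff} forces $y_0(y) = a + L(y - x) + o(|y - x|)$ uniformly in the choice of~$y_0$; composition with the $\cnt{1}$ map $\grad \phi$ near the non-zero point $x - a$ shows that $g$ is classically differentiable at~$x$, and Lemma~\ref{Alexandrov theorem}~\ref{Alexandrov theorem:3}$\Rightarrow$\ref{Alexandrov theorem:1} gives pointwise second-order differentiability of~$f$, hence of~$\da{K}{\phi}$. Conversely, if~$\da{K}{\phi}$ is pointwise twice differentiable at~$x$, so is~$f$, and Lemma~\ref{Alexandrov theorem}~\ref{Alexandrov theorem:1}$\Rightarrow$\ref{Alexandrov theorem:2} delivers multivalued differentiability of~$\subgr f$ at~$x$; in particular $\subgr f(x)$ is a singleton and (c) forces $\anp{K}{\phi}(x) = \{a\}$. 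Applying this estimate to the elements $-\grad \phi(y - y_0) + \kappa y \in \subgr f(y)$ with arbitrary $y_0 \in \anp{K}{\phi}(y)$, inverting via $\grad \phi^\ast$ from Remark~\ref{rem:phi-phiast}, and using the identity $y_0 = y - \da{K}{\phi}(y) \grad \phi^\ast(\grad \phi(y - y_0))$ together with continuity of~$\da{K}{\phi}$ at~$x$ yield the linear map witnessing the multivalued differentiability of~$\anp{K}{\phi}$ at~$x$. The Hessian identity finally comes from differentiating~$g$ at~$x$ via the chain rule and using $0$-homogeneity of~$\grad \phi$ to rewrite $\grad \phi(y - \anp{K}{\phi}(y)) = (\grad \phi \circ \an{K}{\phi})(y)$.
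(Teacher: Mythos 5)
Your proposal is correct, and for items \ref{basic_properties_of_da_and_anp_4}, \ref{basic_properties_of_da_and_anp_6} and \ref{basic_properties_of_da_and_anp_5} it follows essentially the same route as the paper (compactness for weak continuity; Remark~\ref{single-valuedness_of_anp} plus~(c) for~(d); the convex function $(\kappa/2)|\cdot|^2 - \da{K}{\phi}$ and Lemma~\ref{Alexandrov theorem} for the equivalence with differentiability of $\anp{K}{\phi}$). Where you genuinely diverge is in~(a), (b) and~(c): the paper simply cites Zaj\'{\i}\v{c}ek for~(a) and~(b), and for~(c) it invokes Fitzpatrick's differentiability criterion (via the Lipschitz bound and $(\da{K}{\phi})'(x;\an{K}{\phi}(x))=1$) together with Konjagin's theorem for the ``not a singleton $\Rightarrow$ not differentiable'' direction. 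Your alternative is self-contained: you derive~(a) from weak continuity and the mean value theorem, get~(b) as an infimum of uniformly $\cnt{2}$ functions, and then deduce~(c) purely from the fact that the infimum of the linear forms $v \mapsto \grad\phi(x-y)\bullet v$ over a compact set is linear iff all the forms coincide, which by $0$-homogeneity and the injectivity of $\grad\phi|S$ (Remark~\ref{rem:phi-phiast}) happens iff $\anp{K}{\phi}(x)$ is a singleton; linearity of the directional derivative of a semiconcave function is then equivalent to differentiability. This buys independence from the external references at the cost of redoing standard material. One small imprecision in your converse direction of~(e): when you recover the expansion of $y_0$ from that of the subgradient selection via $y_0 = y - \da{K}{\phi}(y)\grad\phi^{\ast}(\grad\phi(y-y_0))$, continuity of $\da{K}{\phi}$ at~$x$ is not quite enough to differentiate the product — you need $\da{K}{\phi}(y) = \da{K}{\phi}(x) + \uD\da{K}{\phi}(x)(y-x) + o(|y-x|)$, which is available at that stage from~(c) since $\anp{K}{\phi}(x)$ is already known to be a singleton; this is a harmless fix, not a gap.
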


\begin{proof}
    The assertions \ref{basic_properties_of_da_and_anp_1} and
    \ref{basic_properties_of_da_and_anp_2} correspond to \cite[Corollary to
    Theorem 3*]{MR699027} and \cite[Theorem 5]{MR699027}, respectively. 

    We prove \ref{basic_properties_of_da_and_anp_3}. If $ \anp{K}{\phi}(x) $ is
    a~singleton, then for every $ v \in \Real{\adim} $ the partial derivative of
    $\da{K}{\phi}$ at $x$ with respect to $v$ exists and equals $\grad
    \phi(x-\anp{K}{\phi}(x)) \bullet v$
    by~\ref{basic_properties_of_da_and_anp_1}. Since $\da{K}{\phi}$ is Lipschitz
    continuous with Lipschitz constant~$1$ by~\cite[Lemma~2.38(a)]{DRKS2020ARMA}
    and $(\da{K}{\phi})'(x;\an{K}{\phi}(x)) = 1$
    by~\cite[Lemma~2.32(c)]{DRKS2020ARMA} we conclude that $\da{K}{\phi}$ is
    differentiable at~$x$ using~\cite[2.4, 2.5]{Fitzpatrick1984}. On the other
    hand if $\anp{K}{\phi}(x)$ is not a~singleton then $ \da{K}{\phi} $ is not
    differentiable at $ x $ by a~result of Konjagin~\cite{MR0493113} (see also
    \cite[Proposition~2]{MR699027}).

    Assertion \ref{basic_properties_of_da_and_anp_4} follows from
    \ref{basic_properties_of_da_and_anp_3} and \ref{single-valuedness_of_anp}.
    
    To prove \ref{basic_properties_of_da_and_anp_6} we observe that for $ x \in
    \Real{\adim} \without K $ there exist,
    by~\ref{basic_properties_of_da_and_anp_2}, a~constant $ \kappa > 0 $,
    an~open neighbourhood $ U $ of $ x $, and a~convex function $ V : U \to
    \Real{} $ such that
    \begin{displaymath}
        V(y) = (\kappa/2)|y|^{2} - \da{K}{\phi}(y) \quad \text{for $ y \in U $.}
    \end{displaymath} Moreover, we observe, using
    \ref{basic_properties_of_da_and_anp_1}, that if $ \xi : U \to \Real{\adim} $
    is a function such that $ \xi(y) \in \anp{K}{\phi}(y) $ for every $ y \in U
    $, then $ \kappa y-\grad \phi(y-\xi(y)) = \kappa y - \grad
    \phi\big(\da{K}{\phi}(y)^{-1}(y - \xi(y))\big) \in \subgr V(y) $. Therefore,
    we conclude from~\ref{Alexandrov
      theorem} that $ \da{K}{\phi} $ is pointwise differentiable of order~$ 2 $
    at~$ x $ if and only if~$ \anp{K}{\phi} $ is differentiable at~$ x $. The
    displayed equation in~\ref{basic_properties_of_da_and_anp_6} also follows
    from the postscript of~\ref{Alexandrov theorem}.

    Finally we prove \ref{basic_properties_of_da_and_anp_5}. The argument used
    in \cite[2.38(b)]{DRKS2020ARMA}, which proves the statement for the
    restriction of $ \anp{K}{\phi} $ to the set of points where it is
    single-valued, also works in the general case of
    \ref{basic_properties_of_da_and_anp_5}. For completeness we provide a
    proof. By contradiction we assume there are $ x \in \Real{\adim} $, $
    \varepsilon > 0 $ and two sequences $x_{i}\in \Real{\adim} $ and $a_{i}\in
    K$ such that $ x_{i} \to x $, $ a_{i} \in \anp{K}{\phi}(x_{i}) $ and $
    |a_{i} - b| \geq \varepsilon $ for every $ b \in \anp{K}{\phi}(x) $ and for
    every $ i \geq 1 $. Noting that
    \begin{displaymath}
        |\da{K}{\phi}(x_{i})-\da{K}{\phi}(x)| \leq \phi(x_{i}-x)
    \end{displaymath} and
    \begin{displaymath}
        \phi(a_{i}-x) \leq \da{K}{\phi}(x_{i}) + \phi(x_{i}-x) \leq
        \da{K}{\phi}(x) + 2F(x_{i}-x)
    \end{displaymath} for every $ i \geq 1 $, it follows that $\{a_{i}: i \geq 1
    \}$ is a bounded sequence and consequently we can assume $ a_{i} \to a $ for
    some $ a \in K $. Then
    \begin{displaymath}
        \da{K}{\phi}(x) = \lim_{i \to \infty}\da{K}{\phi}(x_{i}) = \lim_{i \to
          \infty}\phi(a_{i}-x_{i}) = \phi(x-a), \quad a \in \anp{K}{\phi}(x) \,.
    \end{displaymath} It follows that $|a_{i} - a| \geq \varepsilon $ for every
    $ i \geq 1 $, which is in contradiction with $ a_{i} \to a $.
\end{proof}

\begin{Remark}
    Continuity properties of $\anp{K}{\phi}|U$ will be studied more carefully
    in~\ref{lem:xi-uni-cont} in case~$\phi$ is strictly convex and
    in~\ref{thm:lipxi-minkowski} in case $\phi$ is uniformly convex.
\end{Remark}

\begin{Lemma}
    \label{Gariepy-Pepe aux}
    Assume $T$ is an hyperplanes in $\Real{\adim}$, $ \alpha \in T $,
    $ f : T \to T^{\perp} $ is function continuous at $ \alpha $,
    $ a = \alpha + f(\alpha) $, $ A = \{ \chi + f(\chi): \chi \in T \} $ and
    $ \Tan(A,a) \subseteq T $. Then $f$ is differentiable at $ \alpha $,
    $\uD f(\alpha) = 0$, and $ \Tan(A,a) = T $.
\end{Lemma}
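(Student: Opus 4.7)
The plan is to argue by contradiction. Assume $f$ is not differentiable at~$\alpha$ with derivative~$0$, i.e.\ there exist $\varepsilon > 0$ and a sequence $\chi_i \in T \without \{\alpha\}$ with $\chi_i \to \alpha$ such that
\begin{displaymath}
    |f(\chi_i) - f(\alpha)| \geq \varepsilon |\chi_i - \alpha| \quad \text{for every $i \in \natp$} \,.
\end{displaymath}
Set $x_i = \chi_i + f(\chi_i) \in A$. Continuity of $f$ at~$\alpha$ gives $x_i \to a$, and since $\chi_i \neq \alpha$ we also have $x_i \neq a$. Because $\chi_i - \alpha \in T$ and $f(\chi_i) - f(\alpha) \in T^\perp$ are orthogonal,
\begin{displaymath}
    |x_i - a|^2 = |\chi_i - \alpha|^2 + |f(\chi_i) - f(\alpha)|^2.
\end{displaymath}

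Next I~would set $r_i = |x_i - a|^{-1}$ so that the vectors $v_i = r_i(x_i - a)$ have unit norm. Passing to a~subsequence we may assume $v_i \to v$ with $|v|=1$. The point is to show that the $T^\perp$-component $\project{T^\perp}(v)$ of the limit is nonzero, which will contradict $v \in \Tan(A,a) \subseteq T$. Using the orthogonal decomposition above, letting $t_i = |f(\chi_i) - f(\alpha)|/|\chi_i - \alpha| \ge \varepsilon$, one computes
\begin{displaymath}
    |\project{T^\perp}(v_i)|^2
    = \frac{|f(\chi_i) - f(\alpha)|^2}{|x_i - a|^2}
    = \frac{t_i^2}{1 + t_i^2}
    \ge \frac{\varepsilon^2}{1 + \varepsilon^2} > 0,
\end{displaymath}
so $|\project{T^\perp}(v)| \ge \varepsilon/\sqrt{1 + \varepsilon^2} > 0$. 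Since $x_i \in A \without \{a\}$, $x_i \to a$, and $r_i > 0$, the definition of the tangent cone yields $v \in \Tan(A,a)$, contradicting $\Tan(A,a) \subseteq T$. Hence $f$ is differentiable at~$\alpha$ with $\uD f(\alpha) = 0$.

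It remains to show $T \subseteq \Tan(A,a)$. Given $v \in T$ with $v \neq 0$, take $\chi_i = \alpha + v/i$ and $x_i = \chi_i + f(\chi_i) \in A$. Then $x_i \to a$ by continuity of~$f$, and with $r_i = i$ we have
\begin{displaymath}
    r_i (x_i - a) = v + i\bigl(f(\alpha + v/i) - f(\alpha)\bigr) = v + |v| \cdot \frac{f(\alpha + v/i) - f(\alpha)}{|v/i|} \longrightarrow v,
\end{displaymath}
by the differentiability of $f$ at $\alpha$ with $\uD f(\alpha)=0$ just proved. Thus $v \in \Tan(A,a)$, and since $0 \in \Tan(A,a)$ trivially we get $T \subseteq \Tan(A,a)$, so $\Tan(A,a) = T$.

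The only mildly tricky step is the quantitative computation of $|\project{T^\perp}(v)|$ bounded below; the rest is direct manipulation of the definitions.
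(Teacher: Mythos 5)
Your proof is correct and follows essentially the same strategy as the paper's: assume the difference quotient does not tend to zero and extract a tangent direction of $A$ at $a$ with nonvanishing $T^{\perp}$-component, contradicting $\Tan(A,a) \subseteq T$. Your normalization by $|x_i-a|$ rather than $|\chi_i-\alpha|$ is in fact slightly more robust (it handles unbounded difference quotients without extra argument), and you also spell out the verification of $T \subseteq \Tan(A,a)$, which the paper's proof leaves implicit.
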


\begin{proof}
    We prove that $\uD f(\alpha)$ exists and equals zero. If
    $\limsup_{T \ni \chi \to \alpha} |f(\chi) - f(\alpha)| \cdot |\chi -
    \alpha|^{-1} > 0$, then we could find a sequence $\chi_j \in T$ such that
    $\chi_j \to \alpha$,
    $(\chi_j - \alpha) \cdot |\chi_j - \alpha|^{-1} \to w \in T$, and
    $(f(\chi_j) - f(\alpha)) \cdot |\chi_j - \alpha|^{-1} \to v \in T^{\perp}$
    with $v \ne 0$ as $j \to \infty$; hence, setting $v_j = \chi_j + f(\chi_j)$
    we would obtain $(v_j - a) \cdot |v_j - a|^{-1} \to w \in \Tan(A,a)$ as
    $j \to \infty$ and $\perpproject{T}w \ne 0$ which would contradict
    $ \Tan(A,a) \subseteq T $ by the definition of tangent cone;
    cf.~\cite[3.1.21]{Federer1969}.
\end{proof}

The following Lemma follows rather directly from classcial implicit function
theorems for Lipschitz and semiconcave functions. In the next lemma, given $ x
\in \Real{\adim} $, $ \epsilon, \delta > 0 $, and a linear space $T \subseteq
\Real{\adim}$, we make use of cylinders aligned to $T$ defined the following way
\begin{equation*}
    U_{\epsilon, \delta}(x,T)
    = \bigl\{
    y : |\project{T}(y-x) |< \delta,\; |\project{T}^\perp(y-x)| < \epsilon
    \bigr\}
    \,.
\end{equation*}
Moreover, we recall from Lemma \ref{basic_properties_of_da_and_anp} that
\begin{equation*}
    \grad \da{K}{\phi}(x) \neq 0
    \qquad \textrm{for every $ x \in (\dmn \grad \da{K}{\phi}) \sim K $} \,.
\end{equation*}
  
\begin{Lemma}
    \label{Gariepy-Pepe}
    Suppose $ K $ is a closed subset of $ \Real{\adim+1} $, $ r > 0 $,
    $ x \in S^{\phi}(K,r) $, $ \da{K}{\phi} $ is differentiable at $ x $,
    $ \nu = \grad \da{K}{\phi}(x)/|\grad \da{K}{\phi}(x)| $ and
    $ T = \Real{\adim} \cap \{ v : v \bullet \nu =0 \} $.
    
    Then $ T = \Tan(S^{\phi}(K,r),x) $ and there are $ \epsilon, \delta > 0 $
    and a semiconcave function $ f : T \to \R $ such that $ f $ is
    differentiable at $ \project{T}x $ with $ \Der f(\project{T}x) =0 $,
    \begin{equation}\label{Gariepy-Pepe eq1}
        U_{\epsilon, \delta}(x,T) \cap S^{\phi}(K,r) = U_{\epsilon, \delta}(x, T) \cap \{ \chi + f(\chi)\nu : \chi \in T \}
    \end{equation} 
    and 
    \begin{equation}\label{Gariepy-Pepe eq2}
        U_{\epsilon, \delta}(x, T) \cap \{x : \da{K}{\phi}(x) \geq r   \} = U_{\epsilon, \delta}(x, T) \cap \{ \chi + t\nu: t \leq f(\chi)   \}.
    \end{equation}
    Moreover, if $ \da{K}{\phi} $ is pointwise differentiable of order $ 2 $ at
    $ x $ then $ f $ is pointwise differentiable of order $ 2 $ at
    $ \project{T}x $ and
    \begin{displaymath}
        |\grad \da{K}{\phi}(x)|\,   \pt\Der^{2} f(\project{T}x)(u,v)  =
        -\pt\Der^{2}\da{K}{\phi}(x)(u,v) \quad \text{for $ u,v \in T $.}
    \end{displaymath}
\end{Lemma}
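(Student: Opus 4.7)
My strategy is to solve the level-set equation $\da{K}{\phi}(\chi + t\nu) = r$ for $t$ as a semiconcave function $f$ of $\chi \in T$ via an implicit-function argument, then apply Lemma~\ref{Gariepy-Pepe aux} to obtain first-order differentiability of $f$ with vanishing differential at $\project{T}x$, and finally extract the second-order formula by a direct Taylor expansion. I would set $\alpha = \project{T}x$ and $s = x \bullet \nu$ so that $x = \alpha + s\nu$. By~\ref{basic_properties_of_da_and_anp}\ref{basic_properties_of_da_and_anp_2}, fix a convex open neighbourhood~$V$ of~$x$ and $\kappa \geq 0$ such that $y \mapsto (\kappa/2)|y|^{2} - \da{K}{\phi}(y)$ is convex on~$V$. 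Since $\da{K}{\phi}$ is differentiable at~$x$ with gradient $|\grad \da{K}{\phi}(x)|\nu \neq 0$, its superdifferential at~$x$ is the singleton $\{\grad \da{K}{\phi}(x)\}$, and the classical upper semicontinuity of superdifferentials of semiconcave functions yields $\epsilon, \delta > 0$ and $c > 0$ such that $U_{\epsilon, \delta}(x,T) \subseteq V$ and every superdifferential element $\zeta$ of $\da{K}{\phi}$ at any point of $U_{\epsilon, \delta}(x,T)$ satisfies $\zeta \bullet \nu \geq c$.

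From this uniform lower bound it follows that for every $\chi \in T$ with $|\chi - \alpha| < \delta$, the map $t \mapsto \da{K}{\phi}(\chi + t\nu)$ is strictly increasing on $(s - \epsilon, s + \epsilon)$ with rate of increase at least~$c$. Shrinking~$\delta$ if needed, continuity of $\da{K}{\phi}$ together with $\da{K}{\phi}(x) = r$ forces $\da{K}{\phi}(\chi + (s-\epsilon)\nu) < r < \da{K}{\phi}(\chi + (s+\epsilon)\nu)$ for such~$\chi$; the intermediate value theorem then produces a unique $f(\chi) \in (s - \epsilon, s + \epsilon)$ solving $\da{K}{\phi}(\chi + f(\chi)\nu) = r$. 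The same bound combined with the $1$-Lipschitz continuity of $\da{K}{\phi}$ yields $|f(\chi_1) - f(\chi_2)| \leq c^{-1}|\chi_1 - \chi_2|$, and a routine manipulation of the convex representation of $(\kappa/2)|\cdot|^{2} - \da{K}{\phi}$ shows that $f$ inherits semiconcavity. I would next verify $\Tan(S^{\phi}(K,r), x) \subseteq T$ by observing that any sequence $y_j \in S^{\phi}(K,r)$ with $y_j \to x$ satisfies $0 = \da{K}{\phi}(y_j) - \da{K}{\phi}(x) = \grad \da{K}{\phi}(x) \bullet (y_j - x) + o(|y_j - x|)$, so the $\nu$-component of $(y_j - x)/|y_j - x|$ tends to~$0$ and every cluster direction lies in~$T$. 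Combined with the graph identity that holds by construction of $f$, Lemma~\ref{Gariepy-Pepe aux} (applied with its $T^{\perp}$ equal to $\R\nu$ and its $f$ being $\chi \mapsto f(\chi)\nu$) delivers $\Der f(\alpha) = 0$ and $\Tan(S^{\phi}(K,r), x) = T$. Equation~\eqref{Gariepy-Pepe eq1} is then the graph identity, and equation~\eqref{Gariepy-Pepe eq2} follows from the strict monotonicity just established.

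For the second-order conclusion, assume $\da{K}{\phi}$ is pointwise differentiable of order~$2$ at~$x$ and set $h = \chi - \alpha$, $k = f(\chi) - s$; since $\Der f(\alpha) = 0$ we have $k = o(|h|)$. Substituting into the second-order Taylor polynomial of $\da{K}{\phi}$ at~$x$, and using $\grad \da{K}{\phi}(x) \bullet h = 0$ together with $\grad \da{K}{\phi}(x) \bullet \nu = |\grad \da{K}{\phi}(x)|$, I would obtain
\begin{equation*}
0 = |\grad \da{K}{\phi}(x)|\,k + \tfrac{1}{2}\pt\Der^{2}\da{K}{\phi}(x)(h, h) + o(|h|^{2} + k^{2}),
\end{equation*}
whence $k = -(2|\grad \da{K}{\phi}(x)|)^{-1}\pt\Der^{2}\da{K}{\phi}(x)(h, h) + o(|h|^{2})$; in particular $k = O(|h|^{2})$ and $f$ is pointwise twice differentiable at~$\alpha$ with the asserted formula. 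The main obstacle will be the implicit-function step under merely semiconcave data: it relies on the upper semicontinuity of the superdifferential of $\da{K}{\phi}$ at its differentiability point~$x$ to secure a uniform positive $\nu$-directional rate of increase on a two-sided cylinder, and on the classical preservation of semiconcavity under implicit differentiation in a monotone coordinate.
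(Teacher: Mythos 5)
Your overall skeleton coincides with the paper's: local semiconcavity of $\da{K}{\phi}$ from \ref{basic_properties_of_da_and_anp}\ref{basic_properties_of_da_and_anp_2}, an implicit-function step producing the graph representation, Lemma~\ref{Gariepy-Pepe aux} to upgrade $\Tan(S^{\phi}(K,r),x)\subseteq T$ to equality together with $\Der f(\project{T}x)=0$, and essentially the same Taylor-expansion computation for the second-order formula. The one real difference is that the paper outsources the implicit-function step to Fu's theorem \cite[Theorem~3.3]{MR816398}, whereas you carry it out by hand (upper semicontinuity of the superdifferential at the differentiability point $x$, a uniform $\nu$-directional rate of increase on a two-sided cylinder, the intermediate value theorem, and a Lipschitz bound for $f$). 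Those parts of your argument are correct and make the proof more self-contained.

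The genuine gap is the semiconcavity of $f$. With your orientation --- $\da{K}{\phi}$ increasing along $+\nu$ with rate at least $c$, and $f(\chi)$ the $\nu$-height of the level set --- the ``routine manipulation'' produces the \emph{opposite} convexity. Writing $u=\da{K}{\phi}$, the midpoint inequality $u(\tfrac{y+y'}{2})\ge\tfrac12(u(y)+u(y'))-C|y-y'|^2$ applied to $y=\chi+f(\chi)\nu$ and $y'=\chi'+f(\chi')\nu$ gives $u(\tfrac{\chi+\chi'}{2}+\bar t\nu)\ge r-C'|\chi-\chi'|^2$ with $\bar t=\tfrac12(f(\chi)+f(\chi'))$; since $u$ increases along $\nu$ with rate at least $c$ and equals $r$ at height $f(\tfrac{\chi+\chi'}{2})$, this yields only the upper bound $f(\tfrac{\chi+\chi'}{2})\le\bar t+c^{-1}C'|\chi-\chi'|^2$, i.e.\ midpoint semiconvexity, not semiconcavity. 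And indeed your $f$ need not be semiconcave: for the set $K$ of Remark~\ref{example:Q-Cut_not_empty} and $x=ae_2$ (a differentiability point of $\da{K}{}$), the level set near $x$ is the graph over $T$ of a maximum of circular arcs whose convex corners accumulate at $\project{T}x$. The classical fact you are invoking is that the superlevel set $\{\da{K}{\phi}\ge r\}$ is locally the subgraph of a semiconcave function when heights are measured along $-\nu$, the direction of \emph{decrease}; measuring along $+\nu$ as you do yields a semiconvex $f$, and then \eqref{Gariepy-Pepe eq2} also comes out with the reversed inequality $t\ge f(\chi)$ (note $x+t\nu\in\{\da{K}{\phi}\ge r\}$ for small $t>0$). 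So your construction cannot simultaneously deliver semiconcavity of $f$, \eqref{Gariepy-Pepe eq1}, and the stated second-order identity: flipping the orientation to rescue semiconcavity flips the sign in the ``Moreover'' formula. This sign tension is arguably already latent in the statement and in the paper's appeal to \cite{MR816398}, but your explicit derivation makes it unavoidable; you must fix one orientation and adjust \eqref{Gariepy-Pepe eq2}, the convexity claim, and the second-order formula consistently.
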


\begin{proof}
    We notice that $\da{K}{\phi} $ is locally semiconcave on $ \Real{\adim} \sim
    K $ by Lemma
    \ref{basic_properties_of_da_and_anp}\ref{basic_properties_of_da_and_anp_2}. Since
    $ \da{K}{\phi} $ is differentiable at $ x $ and $ \grad \da{K}{\phi}(x) \neq
    0 $, noting Remark \ref{non emptyness of the subdifferential} and
    \cite[Remark~1.4]{MR816398}, we see that we can apply
    \cite[Theorem~3.3]{MR816398} to find $ \epsilon, \delta > 0 $ and a
    semiconcave function $ f : T \rightarrow \R $ such that \eqref{Gariepy-Pepe
      eq1} and \eqref{Gariepy-Pepe eq2} hold\footnote{At a first sight we can
      only deduce from \cite[Theorem 3.3]{MR816398} that there exist $ \epsilon,
      \delta > 0 $, an hyperplane $ S \subseteq \Real{\adim} $ and a semiconcave
      function $ f : S \rightarrow \R $ such that \eqref{Gariepy-Pepe eq1} and
      \eqref{Gariepy-Pepe eq2} with $ S $ replaced by $ T $. However, closer
      inspection of the proof of~\cite[Theorem~3.3]{MR816398} reveals that we
      can choose $ S = T $, as the existence of a lipschitzian function $ f : T
      \rightarrow \R $ which satisfies \eqref{Gariepy-Pepe eq1} for some $
      \epsilon, \delta > 0 $ directly follows from Clarke implicit function
      theorem.}. Since $ \da{K}{\phi} $ is differentiable at $ x $, then $
    \Tan(S^\phi(K,r), x) \subseteq T $. Therefore, the first part of the
    conclusion follows from Lemma \ref{Gariepy-Pepe aux}.

    Assume now that $ \da{K}{\phi} $ is pointwise differentiable of order $ 2 $
    at $ x $ and $ x =0 $. Setting $\zeta = \chi + f(\chi)\nu$, we notice that
    $\uD \da{K}{\phi}(0)(\zeta) = f(\chi) |\grad \da{K}{\phi}(0)|$ so
    \begin{align}
        0 &= \lim_{T \ni \chi\to 0} \frac{\da{K}{\phi}(\zeta)
          - \da{K}{\phi}(0)-  f(\chi)\,| \grad\da{K}{\phi}(0)|
          - \frac 12 \pt\Der^2\da{K}{\phi}(0)(\zeta, \zeta)}{|\chi|^2}
        \\
        &= -\lim_{T \ni \chi \to 0} \frac{ f(\chi)\,| \grad\da{K}{\phi}(0)|
          + \frac 12 \pt\Der^2\da{K}{\phi}(0)(\chi,\chi)}{|\chi|^2},
    \end{align}
    which means that $ f $ is pointwise differentiable of order $ 2 $ at $ 0 $
    with
    \begin{displaymath}
       |\grad \da{K}{\phi}(0)|\, \pt\Der^2f(0) 
        = -\pt\Der^2\da{K}{\phi}(0)| T \times T \,.
        \qedhere
    \end{displaymath}
\end{proof}

\begin{Lemma}
    \label{lem:existence_of_real_eigenvalues:aux}
    Suppose $T$ is a hyperplane in $\Real{\adim}$, $ f : T \to T^\perp $ is a
    function of class~$\cnt{2}$ such that $ f(0)=0 $ and $ \Der f(0) =0 $,
    $ \Sigma = \{\chi + f(\chi): \chi \in T\} $, and
    $ \eta : \Sigma \to \sphere{\adim-1} $ is a function of class~$\cnt{1}$ such
    that $ \eta(x) \in \Nor(\Sigma,x)$ for $ x \in \Sigma $. Then
    \begin{displaymath}
        \Der \eta(0) u \bullet v = - \Der^2f(0)(u,v) \bullet \eta(0) \quad
        \text{for $u,v \in T$} \,.
    \end{displaymath}
\end{Lemma}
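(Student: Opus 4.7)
The strategy is the standard differential-geometric derivation of the Weingarten equation for a graph. The key remark is that because $\Der f(0) = 0$, the parameterisation $F \colon T \to \Sigma$ defined by $F(\chi) = \chi + f(\chi)$ satisfies $F(0) = 0$ and $\Der F(0)$ is the inclusion $T \hookrightarrow \Real{\adim}$; in particular, $\Tan(\Sigma,0) = T$ and, for any $v \in T$, the chain rule gives $\Der(\eta \circ F)(0)v = \Der \eta(0)v$.

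For each fixed $u \in T$, the vector $\Der F(\chi) u = u + \Der f(\chi) u$ is tangent to $\Sigma$ at~$F(\chi)$, hence by the normality hypothesis on~$\eta$
\begin{equation*}
    \eta(F(\chi)) \bullet (u + \Der f(\chi) u) = 0 \quad \text{for every $\chi \in T$.}
\end{equation*}
I~would differentiate this identity in a direction $v \in T$ at $\chi = 0$, apply the product rule, and use $F(0) = 0$, $\Der F(0) v = v$, and $\Der f(0) u = 0$ to collapse the resulting expression to
\begin{equation*}
    0 = \Der \eta(0) v \bullet u + \eta(0) \bullet \Der^2 f(0)(v,u) \,.
\end{equation*}

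Finally, invoking the symmetry $\Der^2 f(0)(v,u) = \Der^2 f(0)(u,v)$ and swapping the names of $u$ and $v$ yields the stated formula $\Der \eta(0) u \bullet v = -\Der^2 f(0)(u,v) \bullet \eta(0)$. I do not anticipate a real obstacle: the only point requiring care is the interpretation of $\Der \eta(0)$, since $\eta$~is defined only on~$\Sigma$, but this is resolved immediately by the chain rule together with $\Tan(\Sigma,0) = T$, which itself comes for free from $\Der f(0) = 0$.
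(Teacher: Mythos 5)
Your proposal is correct and is essentially identical to the paper's own (one-line) proof: both differentiate the normality identity $\eta(\chi+f(\chi))\bullet(u+\Der f(\chi)u)=0$ at $\chi=0$ and use $\Der f(0)=0$ to collapse the result. The computation checks out, including the chain-rule interpretation of $\Der\eta(0)$ via $\Tan(\Sigma,0)=T$.
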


\begin{proof}
    Noting that $ \eta(\chi + f(\chi)) \bullet(u + \Der f(\chi)u) =0 $ for $ u
    \in T $ and $ \chi \in T $, we differentiate this relation with respect to $
    \chi $ at $ 0 $.
\end{proof}

\section{Lipschitz estimates}
\label{sec:Lipschitz}
In this section we consider an abstract Minkowski space $(X,\phi)$ of
dimension~$\adim$ and \emph{we are defining a~Euclidean structure on~$X$} to fit
our problem. For this reason we choose to denote the space with ``$X$'' rather
than ``$\Real{\adim}$'' since the latter refers to a space with a predefined
Euclidean structure which is of no use to us. The operator norm of o bilinear
map $\Lambda : X \times X \to X$ with respect to~$\phi$ is defined as
in~\cite[1.10.5]{Federer1969}, i.e.,
\begin{displaymath}
    \| \Lambda \|_\phi = \sup \bigl\{ \phi(\Lambda(x,y)) :
    x,y \in X ,\, \phi(x) \le 1 ,\, \phi(y) \le 1 \bigr\} \,.
\end{displaymath}
Once the Euclidean structure on $X$ is defined we shall use the symbol
$\|\Lambda\|$ to denote the operator norm of $\Lambda$ with respect to that
Euclidean structure.

\begin{Definition}[\protect{cf.~\cite[4.1]{Federer1959}}]
    \label{def:unp-npp}
    Let $K \subseteq X$ be closed. We define the set of points with \emph{unique
      nearest point}
    \begin{displaymath}
        \Unp{\phi}(K) = X \cap \bigl\{
        x : \Haus{0}(\anp{K}{\phi}(x)) = 1
        \bigr\} \,.
    \end{displaymath}
\end{Definition}

We start by showing that $\anp{K}{\phi}$ is uniformly continuous on certain
sets. Later, in~\ref{thm:lipxi-minkowski} and~\ref{cor:xi-lip-cont}, we
bootstrap this regularity to Lipschitz continuity. Uniform continuity is
obtained for strictly convex norms $\phi$, while Lipschitz continuity requires
uniform convexity and $\cnt{2}$~regularity of~$\phi$.

\begin{Lemma}
    \label{lem:xi-uni-cont} Assume
    \begin{gather}
        \text{$\phi$ is strictly convex} \,, \quad K \subseteq X \text{ is
          closed} \,, \quad 1 < \lambda < \infty \,, \\ K_{\lambda} = (X \without
        K) \cap \{ x : \arho{K}{\phi}(x) \ge \lambda \} \,.
    \end{gather}
    Then there exists $\omega_{\lambda} : \R \to \R$ such that
    $\lim_{t \downarrow 0} \omega_{\lambda}(t) = 0$ and
    \begin{displaymath}
        \phi( a - b ) \le \da{K}{\phi}(x) \, \omega_{\lambda} \bigl(
        \phi(x-y) / \da{K}{\phi}(x) \bigr) \quad \text{for $x \in
          K_{\lambda}$, $y \in X$, $a \in \anp{K}{\phi}(x)$, $b \in
          \anp{K}{\phi}(y)$} \,.
    \end{displaymath}
\end{Lemma}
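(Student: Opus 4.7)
The plan is to argue by contradiction via compactness and to reduce the contradiction to a rigidity statement about $\phi$-balls that follows from strict convexity of $\phi$.

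Suppose no such $\omega_\lambda$ exists. Then there are $\epsilon > 0$ and sequences of closed sets $K_n \subseteq X$ and points $x_n \in (K_n)_\lambda$, $y_n \in X$, $a_n \in \anp{K_n}{\phi}(x_n)$, $b_n \in \anp{K_n}{\phi}(y_n)$ with $\phi(x_n - y_n) \leq \da{K_n}{\phi}(x_n)/n$ but $\phi(a_n - b_n) \geq \epsilon \da{K_n}{\phi}(x_n)$. Translating each configuration by $-a_n$ and rescaling by $\da{K_n}{\phi}(x_n)^{-1}$---operations that preserve the nearest-point projection, the distance function, and the function $\arho{K_n}{\phi}$---I reduce to the case $a_n = 0$ and $\da{K_n}{\phi}(x_n) = 1$, so $\phi(x_n) = 1$, $\phi(y_n - x_n) \to 0$, and $\phi(b_n) \geq \epsilon$.

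Next I would extract convergent subsequences. Since $\phi(b_n - y_n) = \da{K_n}{\phi}(y_n) \leq 1 + \phi(x_n - y_n)$, the sequence $b_n$ is bounded; passing to subsequences I may assume $x_n \to x_0$, $y_n \to x_0$, $b_n \to b_0$, with $\phi(x_0) = 1$, $\phi(b_0) \geq \epsilon$, and $\phi(b_0 - x_0) \leq 1$. From $\arho{K_n}{\phi}(x_n) \geq \lambda$ together with Definition \ref{anisotropic_rho} I get $\da{K_n}{\phi}(\lambda x_n) = \lambda$, whence $\phi(\lambda x_n - b_n) \geq \lambda$ for all $n$ and, in the limit, $\phi(\lambda x_0 - b_0) \geq \lambda$.

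The heart of the argument is the rigidity step: strict convexity of $\phi$ forces $b_0 = 0$. Indeed, for any $z$ with $\phi(z - x_0) \leq 1$, writing $z - \lambda x_0 = (z - x_0) - (\lambda - 1) x_0$ and applying the triangle inequality gives $\phi(z - \lambda x_0) \leq \phi(z - x_0) + (\lambda - 1) \leq \lambda$; equality forces both $\phi(z - x_0) = 1$ and the equality case of Definition \ref{def:sc-norm} applied to $z - x_0$ and $-(\lambda - 1) x_0$, which yields $z - x_0 = -x_0$, i.e.\ $z = 0$. Applied to $z = b_0$, the inequalities $\phi(b_0 - x_0) \leq 1$ and $\phi(b_0 - \lambda x_0) \geq \lambda$ force $b_0 = 0$, contradicting $\phi(b_0) \geq \epsilon$. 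I expect the rigidity step to be the main obstacle: one must set it up cleanly and invoke the equality case of Definition \ref{def:sc-norm} at the right place. The compactness portion is routine once the correct normalization is fixed, and it is the scale- and translation-invariance of the condition $\arho{K_n}{\phi}(x_n) \geq \lambda$ that permits the reduction to $a_n = 0$ and $\da{K_n}{\phi}(x_n) = 1$; strict convexity (rather than mere convexity) of $\phi$ is essential.
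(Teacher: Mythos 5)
Your proof is correct and is essentially the paper's argument: the paper defines $\omega_{\lambda}(t)$ explicitly as a supremum of $\phi(a-b)$ over pairs $(a,b)$ with $\phi(a)=\lambda$, $\phi(b)\ge\lambda$, $\phi((1-1/\lambda)a-b)\le 1+2t$, and proves $\omega_{\lambda}(t)\to 0$ by the same compactness argument with the same rigidity step, namely that equality in the triangle inequality relating the inner ball $\cballF{\phi}{x}{r}$ and the outer ball $\cballF{\phi}{\bar a+\lambda(x-\bar a)}{\lambda r}$ together with strict convexity forces the limit points to coincide. Your normalization $a_n=0$, $\da{K_n}{\phi}(x_n)=1$ is the paper's rescaling about the centre $c=\bar a+\lambda(x-\bar a)$ after an affine change of variables, so the two proofs differ only in presentation (explicit construction of the modulus versus argument by contradiction).
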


\begin{proof}
    For $0 \le t < \infty$ define
    \begin{gather}
        K_{\lambda}(t) = X \times X \cap \bigl\{ (a,b) : \phi(a) = \lambda \,,
        \phi(b) \ge \lambda \,, \phi \bigl( (1-1/\lambda) a - b \bigr) \le 1 +
        2t \bigr\} \,, \\ \omega_{\lambda}(t) = \sup \bigl\{ \phi(a-b) : (a,b)
        \in K_{\lambda}(t)) \bigr\} \,.
    \end{gather}
    Observe that strict convexity of $\phi$ yields
    \begin{displaymath}
        {\textstyle \bigcap} \bigl\{ K_{\lambda}(t) : 0 < t < \infty \bigr\} = X
        \times X \cap \bigl\{ (a,a) : \phi(a) = \lambda \bigr\} \quad \text{and}
        \quad \lim_{t \downarrow 0} \omega_{\lambda}(t) = 0 \,.
    \end{displaymath}
    Indeed, assume $\limsup_{t \downarrow 0}
    \omega_{\lambda}(t) = \delta$.  Find sequences $X \cap \{ a_j : j \in \natp
    \}$ and $X \cap \{ b_j : j \in \natp \}$ such that $(a_j,b_j) \in
    K_{\lambda}(1/j)$, $\phi(a_j - b_j) \ge \delta - 1/j$, $\lim_{j \to \infty}
    a_j = a_0$ and $\lim_{j \to
      \infty} b_j = b_0$ with $\phi(a_0) = \lambda$, $\phi(b_0) \ge \lambda$,
    $\phi(b_0 - a_a) \ge \delta$, $\phi(z_0 - (1 - 1/\lambda)a_0) \le 1$. Then
    \begin{displaymath}
        \lambda \le \phi(b_0) \le \phi(b_0 - (1 - 1/\lambda)a_0) + \phi((1 -
        1/\lambda)a_0) \le 1 + \lambda - 1 = \lambda
    \end{displaymath}
    which implies that $a_0 = b_0$ and $\delta = 0$ by~\ref{def:sc-norm}.

    Let $x \in K_{\lambda} \subseteq \Unp{\phi}(K)$, $y \in X$. Choose
    \begin{gather}
        \bar a \in \anp{K}{\phi}(x) \,, \quad \bar b \in \anp{K}{\phi}(y) \,,
        \quad c = \bar a + \lambda (x - \bar a) \,, \\ r = \da{K}{\phi}(x)
        \,, \quad a = (\bar a - c)/r \,, \quad b = (\bar b - c)/r \,, \quad t =
        \phi(x-y)/r \,.
    \end{gather}
    Clearly we have
    \begin{align}
      \phi(\bar b-x) &\le \phi(\bar b-y) + \phi(y-x)
      \\
                     &\le \phi(\bar a-y) + \phi(y-x)
                       \le \phi(\bar a-x) + 2 \phi(x-y)
                       = r (1 + 2t) \,.
    \end{align}
    Since $(x-c)/r = (1 - 1/\lambda)a$ we obtain
    \begin{displaymath}
        r \phi( (1 - 1/\lambda) a - b ) = \phi((x-c) - (\bar b-c)) = \phi(x -
        \bar b) \le r (1+2t) \,.
    \end{displaymath} Because $x \in K_{\lambda}$ we know also that $\phi(\bar a
    - c) < \phi(\bar b - c)$; hence,
    \begin{displaymath}
        r \phi(b) = \phi(\bar b - c) > \phi(\bar a - c) = r \phi(a) = \lambda
        \phi(x - \bar a) = \lambda r \,.
    \end{displaymath} This shows that $(a,b) \in K_{\lambda}(t)$ so $\phi(a-b)
    \le \omega_{\lambda}(t)$ and $\phi(\bar a - \bar b) \le r
    \omega_{\lambda}(t)$.
\end{proof}

\begin{Corollary}
    \label{cor:xi-uni-cont} Assume $\phi$ is strictly convex, $K \subseteq X$ is
    closed, $0 < s < t < \infty$, $1 < \lambda < \infty$, and
    \begin{displaymath}
        K_{\lambda,s,t} = \bigl\{ x : \arho{K}{\phi}(x) \ge \lambda \,, s \le
        \da{K}{\phi}(x) \le t \bigr\} \,.
    \end{displaymath} Then $\anp{K}{\phi}|K_{\lambda,s,t}$ is uniformly
    continuous.
\end{Corollary}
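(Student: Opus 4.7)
The plan is to deduce the corollary directly from the pointwise modulus-of-continuity estimate furnished by Lemma~\ref{lem:xi-uni-cont}, turning the pointwise bound into a uniform one via the two-sided control $s \le \da{K}{\phi}(\cdot) \le t$ that is built into the definition of $K_{\lambda,s,t}$.

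First I would observe that $\anp{K}{\phi}$ is single-valued on $K_{\lambda,s,t} \subseteq K_{\lambda}$: for any $x \in K_{\lambda}$ and any $a \in \anp{K}{\phi}(x)$, the condition $\arho{K}{\phi}(x) \ge \lambda > 1$ means that the ray from $a$ through $x$ realises the distance for a parameter strictly larger than $\da{K}{\phi}(x)$, so Remark~\ref{single-valuedness_of_anp} forces $\anp{K}{\phi}(x) = \{a\}$. This justifies treating $\anp{K}{\phi}|K_{\lambda,s,t}$ as an ordinary (single-valued) map and makes the target of uniform continuity unambiguous.

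Next I would apply Lemma~\ref{lem:xi-uni-cont} to arbitrary $x,y \in K_{\lambda,s,t}$, obtaining
\begin{equation*}
    \phi\bigl( \anp{K}{\phi}(x) - \anp{K}{\phi}(y) \bigr)
    \le \da{K}{\phi}(x) \, \omega_{\lambda}\bigl( \phi(x-y) / \da{K}{\phi}(x) \bigr).
\end{equation*}
The function $\omega_{\lambda}$ is non-decreasing (its defining family $K_{\lambda}(t)$ is monotone in $t$), so using $s \le \da{K}{\phi}(x) \le t$ the right-hand side is bounded above by $t \, \omega_{\lambda}(\phi(x-y)/s)$. Given $\varepsilon > 0$, since $\omega_{\lambda}(u) \to 0$ as $u \downarrow 0$, pick $\eta > 0$ with $\omega_{\lambda}(\eta) \le \varepsilon/t$ and set $\delta = s\eta$; then $\phi(x-y) < \delta$ forces $\phi(\anp{K}{\phi}(x) - \anp{K}{\phi}(y)) \le \varepsilon$, which is exactly uniform continuity.

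I do not anticipate any real obstacle: the corollary is a book-keeping step that converts the pointwise estimate of Lemma~\ref{lem:xi-uni-cont} into a uniform one by exploiting the compactness-type lower bound $\da{K}{\phi}(x) \ge s$ (to kill the scale in the argument of $\omega_{\lambda}$) and the upper bound $\da{K}{\phi}(x) \le t$ (to kill the multiplicative factor in front of it). The only points needing verification are the monotonicity of $\omega_{\lambda}$ and single-valuedness of $\anp{K}{\phi}$ on $K_{\lambda}$, both of which follow immediately from material already in the excerpt.
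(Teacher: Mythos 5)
Your proof is correct and is exactly the deduction the paper intends (the paper states the corollary without proof, as an immediate consequence of Lemma~\ref{lem:xi-uni-cont}): the monotonicity of $\omega_{\lambda}$ and the single-valuedness of $\anp{K}{\phi}$ on $K_{\lambda}$ both hold as you claim, and the bounds $s \le \da{K}{\phi}(x) \le t$ uniformize the estimate in the stated way.
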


\begin{Remark}
    This provides an alternative proof that $\anp{K}{\phi} | \Unp{\phi}(K)$ is
    continuous; cf.~\cite[2.42]{DRKS2020ARMA}.
\end{Remark}

\begin{Remark}
    \label{mr:setup}
    Assume that~$X$ is a~finite dimensional vectorspace equipped with a~strictly
    convex and continuously differentiable (away from the origin) norm
    $\phi : X \to \R$. We~define
    \begin{gather}
        \label{eq:setup}
        S = \Bdry{\cballF{\phi}01} \,, \quad
        \xi : X \without \{0\} \to S
        \quad \text{by} \quad \xi(x) = x \phi(x)^{-1}
        \quad \text{for $x \in X \without \{0\}$} \,,
        \\
        \pi : S \to \Hom(X,X) \quad \text{by} \quad \pi = \uD \xi | S
        \,.
    \end{gather} Note that whenever $\eta \in S$ the map $\pi(\eta)$ is a
    projection onto $\Tan(S,\eta)$ such that
    \begin{equation}
        \label{eq:pi-props} \pi(\eta) \circ \pi(\eta) = \pi(\eta) \,, \quad \im
        \pi(\eta) = \Tan(S,a) \,, \quad \eta \in \ker \pi(\eta) \quad \text{for
          $\eta \in S$} \,.
    \end{equation}
\end{Remark}

\begin{Lemma}
    \label{lem:pi-injective}
    Consider the situation as in~\ref{mr:setup}. Let $0 < \varepsilon < 1$ and
    set
    \begin{displaymath}
        R = \sup \R \cap \bigl\{ r : 0 < r < 1 \,, \eta,\zeta \in S \,,
        \phi(\eta-\zeta) \le r \text{ implies } \| \pi(\eta) - \pi(\zeta)
        \|_\phi \le 1-\varepsilon \bigr\} \,.
    \end{displaymath}
    Then $\pi(\eta) | S \cap \cballF \phi{\eta}R$ is injective whenever
    $\eta \in S$.
\end{Lemma}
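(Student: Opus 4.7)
The plan is to argue by contradiction: I will assume $\zeta_1, \zeta_2 \in S \cap \cballF{\phi}{\eta}{R}$ satisfy $\pi(\eta)\zeta_1 = \pi(\eta)\zeta_2$ but $\zeta_1 \ne \zeta_2$, and derive a contradiction. Since $\ker \pi(\eta) = \R \eta$ by~\eqref{eq:pi-props}, I can write $\zeta_1 - \zeta_2 = c \eta$ for some $c \in \R \without \{0\}$ and, after relabelling, assume $c > 0$. The whole argument then turns on the sign of the single real number $t := \uD\phi(\zeta_2)\eta$, for which I will produce two mutually contradictory bounds.

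For the first bound I will deduce an explicit formula for $\pi$: a direct differentiation of $\xi(x) = x\phi(x)^{-1}$ gives $\pi(\zeta)v = v - \zeta\,\uD\phi(\zeta)v$ for every $\zeta \in S$ and $v \in X$. Together with Euler's identity $\uD\phi(\zeta)\zeta = \phi(\zeta) = 1$ this yields $\pi(\eta)\eta = 0$ and $(\pi(\eta) - \pi(\zeta_2))\eta = -(\eta - t\zeta_2)$, so the defining property of $R$ gives $\phi(\eta - t\zeta_2) \le 1 - \varepsilon$. To pin down the sign of $t$ I will combine the Lipschitz bound $|\uD\phi(\zeta)v| \le \phi(v)$ (a consequence of $1$-homogeneity and convexity of $\phi$) with $\uD\phi(\zeta_2)\zeta_2 = 1$: this yields $|1 - t| = |\uD\phi(\zeta_2)(\zeta_2 - \eta)| \le \phi(\zeta_2 - \eta) \le R \le 1$, hence $t \ge 0$; the triangle inequality $1 = \phi(\eta) \le \phi(\eta - t\zeta_2) + t$ then upgrades this to $t \ge \varepsilon > 0$.

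For the second bound I will use convexity of $\phi$ along the segment joining $\zeta_2$ to $\zeta_1$: the function $g(s) := \phi(\zeta_2 + sc\eta)$ is convex on $[0,1]$ with $g(0) = \phi(\zeta_2) = 1 = \phi(\zeta_1) = g(1)$, hence $g \le 1$ on $[0,1]$ and its right derivative at~$0$, which equals $ct$, must be non-positive. Since $c > 0$ this gives $t \le 0$, contradicting the first bound and completing the proof. The main obstacle is precisely the first step: the naive estimate coming from $\phi(\zeta_2 - \eta) \le R$ alone only yields $t \ge 0$, and strict positivity crucially depends on the $1 - \varepsilon$ gap appearing in the defining property of $R$. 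A minor technical point to verify is that the supremum defining $R$ is actually attained (i.e.\ the defining inequality holds at $r = R$ itself), which follows from compactness of $S$ and continuity of $\pi$.
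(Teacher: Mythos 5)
Your proof is correct, and it takes a genuinely different route from the paper's. The paper argues geometrically: it restricts to the two-plane $P = \lin\{\eta,\zeta_1\}$, invokes the Monotonicity Lemma for Minkowski planes to show that the arc of $S \cap P$ joining the two offending points stays inside $S \cap \cballF{\phi}{\eta}{R}$, and then applies the mean value theorem to produce an intermediate point $\nu$ on that arc with $\eta \in \Tan(S,\nu)$, whence $\|\pi(\eta)-\pi(\nu)\|_\phi \ge \phi(\pi(\eta)\eta - \pi(\nu)\eta) = \phi(\eta) = 1 > 1-\varepsilon$, contradicting the choice of $R$. You instead work from the explicit formula $\pi(\zeta)v = v - \zeta\,\uD\phi(\zeta)v$ (valid on $S$ by Euler's identity) and derive two incompatible bounds on the single scalar $t = \uD\phi(\zeta_2)\eta$: the lower bound $t \ge \varepsilon$ from the $1-\varepsilon$ gap applied to the pair $(\eta,\zeta_2)$ together with the $1$-Lipschitz bound $|\uD\phi(\zeta)v| \le \phi(v)$, and the upper bound $t \le 0$ from one-variable convexity of $s \mapsto \phi(\zeta_2 + sc\eta)$ with equal endpoint values. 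Your argument buys elementarity and locality: it dispenses with the external Monotonicity Lemma, with the regular parametrization of $S \cap P$, and with the need to control a whole arc --- the hypothesis on $R$ is used only at the two given points --- at the cost of being less geometric than the classical ``a chord parallel to $\eta$ forces a tangent parallel to $\eta$'' picture. The only technical point you flag, namely that the defining implication of $R$ should be available at radius exactly $R$ (not just for $r<R$), is equally present in the paper's own proof, which applies it at the point $\nu \in S \cap \cballF{\phi}{\eta}{R}$ possibly lying on the boundary sphere; so this is a shared, and easily repaired, detail rather than a defect of your approach.
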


\begin{proof}
    Assume that for some $\eta \in S$ the map $\pi(\eta)|S \cap \cballF
    \phi{\eta}R$ is not injective. Set
    \begin{displaymath}
        D = S \cap \cballF \phi{\eta}R
    \end{displaymath}
    and let $\xi,\zeta \in D$ be such that $\pi(\eta) \xi = \pi(\eta) \zeta$;
    hence, $\xi-\zeta \in \ker \pi(\eta) = \lin \{\eta\}$. Assume
    $\phi(\xi-\eta) \le \phi(\zeta-\eta)$. If $\eta = \xi$, then $\zeta = -\eta$
    and $\phi(\zeta-\eta) = 2 > 1$ which cannot happen because $\zeta \in D$ and
    $R \le 1$. Let $P = \lin \{ \eta \,, \xi \}$. Then
    $\zeta = \xi + \lambda \eta$ for some $\lambda \in \R$ and we get
    \begin{displaymath}
        \eta,\xi,\zeta \in P \cap S \,.
    \end{displaymath} Let $\gamma : \R \to S \cap P$ be such that
    \begin{displaymath}
        \phi(\gamma'(t)) > 0 \quad \text{for $t \in \R$}\,, \quad \gamma(0) =
        \xi \,, \quad \gamma(1) = \zeta \,.
    \end{displaymath} Set $A = \im \gamma|[0,1]$. Since $\xi - \zeta \in \lin
    \{\eta\}$ we see that both $\xi$ and $\eta$ are on the same side of the line
    $\lin\{\eta\}$ in~$P$. Therefore, the Monotonicity
    Lemma~\cite[Proposition~31]{Martini2001} yields that $[0,1] \ni t \mapsto
    \phi(\gamma(t) - \eta)$ is a~strictly increasing function and we know that
    $\phi(\zeta-\eta) \le R$; thus, we have $\phi(\gamma(t) - \eta) \le R$ for
    all $t \in [0,1]$ and
    \begin{displaymath}
        A \subseteq D \,.
    \end{displaymath}
    Let $w \in P$ and $\omega \in P^*$ be such that $w$ and $\eta$ are linearly
    independent, $\omega(w) = 1$, and $\omega(\eta) = 0$. Define the function
    $f : \R \to \R$ by
    \begin{displaymath}
        f(t) = \omega(\gamma(t)) \quad \text{for $t \in \R$} \,.
    \end{displaymath} Note that $f(1) - f(0) = \omega(\zeta-\xi) = 0$ so $f(1) =
    f(0)$ and, by the mean value theorem, there exists $t_0 \in [0,1]$ such that
    \begin{displaymath}
        0 = f'(t_0) = \omega(\gamma'(t_0)) \,; \quad \text{hence,} \quad
        \gamma'(t_0) = \lambda \eta \quad \text{for some $\lambda \in \R
          \without \{0\}$} \,.
    \end{displaymath} Set $\nu = \gamma(t_0)$. Since $\gamma'(t_0) \in
    \Tan(S,\nu)$ we see that $\eta \in \Tan(S,\nu)$ and $\pi(\nu) \eta = \eta$
    so
    \begin{displaymath}
        \| \pi(\eta) - \pi(\nu) \|_\phi \ge \phi( \pi(\eta) \eta - \pi(\nu) \eta
        ) = \phi( \eta ) = 1
    \end{displaymath} but $\nu \in A \subseteq D$ so this contradicts the choice
    of~$R$.
\end{proof}

\begin{Remark}
    \label{rem:graph}
    Consider the situation as in~\ref{mr:setup} and assume $\phi$ is of
    class~$\cnt{2}$ away from the origin. Let $\varepsilon \in (0,1)$ and
    $\eta \in S$. Set $R = R_{\text{\ref{lem:pi-injective}}}(\varepsilon)$,
    $T = \Tan(S,\eta)$, and $M = S \cap \cballF \phi{\eta}R$. Since
    $\pi(\eta)|M$ is injective and $M$ is compact we see that $\pi(\eta)|M$ is
    a~homeomorphism between~$M$ and~$A = \pi(\eta) \lIm M \rIm \subseteq T$. Set
    \begin{displaymath}
        H = (\pi(\eta)|M)^{-1} \circ \pi(\eta) \quad \text{and} \quad C =
        \pi(\eta)^{-1}\lIm \Int A \rIm \,.
    \end{displaymath} Since $\phi$ is of class~$\cnt{2}$ we see that $M$ is a
    manifold of class~$\cnt{2}$ and $H : C \to M$ is of class~$\cnt{2}$,
    \begin{displaymath}
        H(\zeta) = \xi(\zeta) \quad \text{and} \quad \uD H(\zeta)u = \uD
        \xi(\zeta) u \quad \text{for $\zeta \in S \cap C$ and $u \in
          \Tan(S,\zeta)$} \,.
    \end{displaymath} Differentiating the equation
    \begin{displaymath}
        \uD H(\zeta) \circ \pi(\zeta) u = \uD \xi(\zeta) \circ \pi(\zeta) u
        \quad \text{which holds for $\zeta \in S \cap C$ and $u \in T$}
    \end{displaymath} we get
    \begin{displaymath}
        \uD^2 H(\eta)(u,v) + \uD H(\eta) \bigl( \uD \pi(\eta)uv \bigr) = \uD^2
        \xi(\eta)(u,v) + \pi(\eta) \bigl( \uD \pi(\eta)uv \bigr) \quad \text{for
          $u,v \in T$} \,;
    \end{displaymath} however, if $u,v \in T = \im \pi(\eta)$, then $\uD
    \pi(\eta)uv \in \ker \pi(\eta) = \lin \{\eta\}$ by~\eqref{eq:pi-props} and
    for all $x \in S \cap C$ we also have $\uD H(x)\eta = 0$; hence
    \begin{displaymath}
        \uD^2 H(\eta)(u,v) = \uD^2 \xi(\eta)(u,v) \quad \text{for $u,v \in T$}
        \,.
    \end{displaymath} Since $T$ is tangent at~$\eta \in S$ to the level-set $S$
    of~$\phi$ we have $\uD \phi(\eta) u = 0$ whenever $u \in T$; thus,
    differentiating~\eqref{eq:setup} twice and recalling that $\phi(\eta) = 1$
    and $\xi(\eta) = \eta$ we obtain
    \begin{displaymath}
        \uD^2 H(\eta)(u,v) = \uD^2 \xi(\eta)(u,v) = - \uD^2 \phi(\eta)(u,v) \eta
        \quad \text{for $u,v \in T$} \,.
    \end{displaymath}
\end{Remark}

\begin{Remark}
    In~\ref{thm:lipxi-minkowski} we prove that $\anp{K}{\phi}$ is Lipschitz
    continuous on each of the sets $K_{\lambda,s,t} = \{ x : \arho{K}{\phi}(x)
    \ge \lambda \,, s \le \da{K}{\phi}(x) \le t \}$ defined for $0 < s < t <
    \infty$ and $1 < \lambda < \infty$. Since the proof is a bit technical
    we~briefly describe the main idea. For $x \in K_{\lambda,s,t}$ and $y \in
    \Real{\adim} \without K$ with $\phi(x-a) \le \varepsilon$ we set $a =
    \anp{K}{\phi}(x)$ and choose any $b \in \anp{K}{\phi}(y)$. First we find a
    point $c$ for which $T = \Tan(\Bdry{\cballF{\phi}{x}{\da{K}{\phi}(x)}},a) =
    \Tan(\Bdry{\cballF{\phi}{y}{\da{K}{\phi}(y)}},c)$. For this point we have
    $\phi(a-c) \le 2 \phi(x-y)$; see~\eqref{eq:Fac}. Then we choose $e \in
    \Bdry{\cballF{\phi}{y}{\da{K}{\phi}(y)}}$ and $d \in
    \Bdry{\cballF{\phi}{a+\lambda(x-a)}{\lambda \da{K}{\phi}(x)}}$ which have
    the same orthogonal (with respect to the Euclidean structure induced by
    $\uD^2 \phi(a-x)$) projections onto $T$ as~$a$ and~$b$ respectively; see
    Figure~\ref{fig:lipxi}. We represent
    $\Bdry{\cballF{\phi}{a+\lambda(x-a)}{\lambda \da{K}{\phi}(x)}}$ and
    $\Bdry{\cballF{\phi}{y}{\da{K}{\phi}(y)}}$ locally around~$a$ and~$c$ as
    graphs over~$T$ of functions $g_w$ and $g_u$ of class~$\cnt{2}$
    using~\ref{lem:pi-injective}. Employing~\ref{lem:xi-uni-cont} we can find
    $\varepsilon > 0$ which guarantees that $d$, $e$, and $b$ fit on the graphs
    of~$g_w$, $g_y$, and~$g_y$ respectively. Let $q$ be the signed distance
    from~$T$ such that $q(x-a) > 0$. The crucial point of the proof is in the
    estimates~\eqref{eq:qda-Taylor} and~\eqref{eq:qac-est}, where we use the
    second order Taylor formulas for~$g_w$ and~$g_y$ to compare (both ways) the
    heights $q(d-a)$, $q(e-c)$, and $q(b-c)$ with
    $\lambda^{-1}|\project{T}(d-a)|^2$, $|\project{T}(a-c)|^2$,
    and~$|\project{T}(b-c)|^2$ respectively up to errors expressed in terms of
    the modulus of continuity of~$\uD^2 H$, where $H$ comes
    from~\ref{rem:graph}. Analysing the situation presented on
    Figure~\ref{fig:lipxi} we obtain an estimate of the form
    \begin{displaymath}
        q(b − c) \le q(d-a) + q(e-c) \,,
    \end{displaymath} which, using the comparison mentioned before, is
    translated into
    \begin{displaymath}
        |\project{T}(b-c)|^2 \le \Delta_1 \lambda^{-1} |\project{T}(b-a)|^2 +
        |\Delta_2 |\project{T}(a-c)|^2 \,,
    \end{displaymath} where $\Delta_1$ and $\Delta_2$ can be made arbitrarily
    close to~$1$ by adjusting~$\varepsilon$ depending on the modulus of
    continuity of $\uD^2 H$. This leads to the estimate~\eqref{eq:Tb-a-est} of
    the form
    \begin{displaymath}
        |\project{T}(b-a)|
        \le |\project{T}(b-c)| + |\project{T}(c-a)| \le \Delta_3
        |\project{T}(c-a)| + \lambda^{-1/2} \Delta_4 |\project{T}(b-a)| \,,
    \end{displaymath} where, again, $\Delta_4$ is close to~$1$ given
    $\varepsilon$ is small enough; hence, the last term may be absorbed on the
    left-hand side. Since $|\project{T}(b-a)| \approx |b-a|$ and
    $|\project{T}(c-a)| \approx |x-y|$ we get the conclusion.
\end{Remark}

\begin{Theorem}
    \label{thm:lipxi-minkowski} Consider the situation as
    in~\ref{mr:setup}. Assume
    \begin{gather}
        \text{$\phi|X \without \{0\}$ is of class~$\cnt{2}$} \,, \quad
        K \subseteq X \text{ is closed} \,, \quad
        1 < \lambda < \infty \,, \quad
        x,y \in X \,, \quad
        \arho{K}{\phi}(x) \ge \lambda \,, \\
        a \in \anp{K}{\phi}(x) \,, \quad
        b \in \anp{K}{\phi}(y) \,, \quad
        \eta = \frac{a-x}{\phi(a-x)} \,, \quad
        \uD^2 \phi(\eta)(u,u) > 0 \text{ for $u \in \Tan(S,\eta) \without \{0\}$} \,.
    \end{gather}
    There exist $\varepsilon = \varepsilon(\lambda,\phi,\da{K}{\phi}(x))$ and
    $\Gamma = \Gamma(\lambda,\phi)$ such that
    \begin{displaymath}
        \phi(x-y) \le \varepsilon \quad \text{implies} \quad \phi(a-b) \le
        \Gamma \phi(x-y) \,.
    \end{displaymath}
\end{Theorem}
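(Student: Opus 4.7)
The plan is to execute the geometric blueprint described in the remark preceding the statement. The positivity hypothesis $\uD^{2}\phi(\eta)(u,u)>0$ for $u\in T=\Tan(S,\eta)\setminus\{0\}$ allows one to equip $X$ with an auxiliary Euclidean inner product whose restriction to $T$ coincides with $\uD^{2}\phi(\eta)|T\times T$ and with $\eta\perp T$; this fixes the orthogonal projection $\project{T}$. Setting $r=\da{K}{\phi}(x)$, the assumption $\arho{K}{\phi}(x)\ge\lambda$ gives $\da{K}{\phi}(a+\lambda(x-a))=\lambda r$, so the open ball $W=\oballF{\phi}{a+\lambda(x-a)}{\lambda r}$ is disjoint from $K$ while $a\in\Bdry W$; moreover $\Bdry W$ is tangent to $\Bdry\cballF{\phi}{x}{r}$ at $a$ with common tangent plane $a+T$.

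By Lemma~\ref{lem:xi-uni-cont}, after choosing $\varepsilon=\varepsilon(\lambda,\phi,r)$ small enough, $b=\anp{K}{\phi}(y)$ stays in an arbitrarily prescribed small $\phi$-neighbourhood of $a$. Define $c=y+\da{K}{\phi}(y)\eta\in\Bdry\cballF{\phi}{y}{\da{K}{\phi}(y)}$, so that the outward $\phi$-direction at $c$ equals~$\eta$; since $a=x+r\eta$ and $|\da{K}{\phi}(y)-r|\le\phi(x-y)$, one immediately gets $\phi(a-c)\le 2\phi(x-y)$. Represent $\Bdry W$ near $a$ and $\Bdry\cballF{\phi}{y}{\da{K}{\phi}(y)}$ near $c$ as $\cnt{2}$ graphs $g_{w},g_{u}\colon T\to\R$ over a fixed $T$-neighbourhood of~$0$ (applying Remark~\ref{rem:graph} after rescaling to the unit $\phi$-sphere), with $g_{w}(0)=g_{u}(0)=0$, $\Der g_{w}(0)=\Der g_{u}(0)=0$, $g_{w},g_{u}\ge 0$, and Hessians at~$0$ equal to $(\lambda r)^{-1}$ and $\da{K}{\phi}(y)^{-1}$ times the second fundamental form of the unit $\phi$-sphere at~$\eta$, both carrying a common modulus of continuity depending only on~$\phi$.

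Let $q\colon X\to\R$ be the affine function vanishing on $a+T$, orthogonal to $T$, with $q(x-a)>0$. Choose $d\in\Bdry W$ with $\project{T}(d-a)=\project{T}(b-a)$ and $e\in\Bdry\cballF{\phi}{y}{\da{K}{\phi}(y)}$ with $\project{T}(e-c)=\project{T}(a-c)$. Three elementary inclusions---$b\in K\subseteq X\setminus W$, $a\in K\subseteq X\setminus\oballF{\phi}{y}{\da{K}{\phi}(y)}$, and $b\in\Bdry\cballF{\phi}{y}{\da{K}{\phi}(y)}$---translate respectively into $q(b-a)\le g_{w}(\project{T}(b-a))$, $q(a-c)\le g_{u}(\project{T}(a-c))$, and $q(b-c)=g_{u}(\project{T}(b-c))$. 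Combining via the identity $q(b-a)=q(b-c)+q(c-a)$ yields
\begin{equation*}
g_{u}(\project{T}(b-c))\le g_{w}(\project{T}(b-a))+g_{u}(\project{T}(a-c)).
\end{equation*}
Setting $\alpha=|\project{T}(b-a)|$, $\beta=|\project{T}(b-c)|$, $\gamma=|\project{T}(a-c)|$ and substituting the second-order Taylor formulas for $g_{w},g_{u}$ at~$0$---whose remainders can be bounded by $\tilde\varepsilon(\alpha^{2}+\beta^{2}+\gamma^{2})$ with $\tilde\varepsilon\to 0$ as $\varepsilon\downarrow 0$---gives $\beta^{2}(1-\tilde\varepsilon)\le\lambda^{-1}(1+\tilde\varepsilon)\alpha^{2}+(1+\tilde\varepsilon)\gamma^{2}$. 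Combined with the triangle inequality $\alpha\le\beta+\gamma$, an elementary algebraic absorption valid for $\tilde\varepsilon<\lambda-1$ produces $\alpha\le\tfrac{2\lambda(1+\tilde\varepsilon)}{\lambda-1-\tilde\varepsilon}\,\gamma\le\Gamma_{0}(\lambda,\phi)\phi(x-y)$. Finally $|q(b-a)|\le g_{u}(\project{T}(b-c))+C\phi(a-c)=O(\beta^{2})+O(\phi(x-y))$ is also $O(\phi(x-y))$, whence $\phi(a-b)\le\Gamma(\lambda,\phi)\phi(x-y)$.

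The main obstacle is the uniform quantitative control of the Taylor errors: one must verify that the common modulus of continuity of $\uD^{2}H$ from Remark~\ref{rem:graph} (depending only on~$\phi$) lets one make $\tilde\varepsilon$ strictly less than $\lambda-1$ by shrinking~$\varepsilon$. The $\da{K}{\phi}(x)$-dependence of $\varepsilon$ arises solely from the rescaling that reduces the spheres of radii $\lambda r$ and $\approx r$ to the unit-sphere regime in which Remark~\ref{rem:graph} operates, whereas $\Gamma$ inherits only the scale-invariant dependence on $\lambda$ (through $\lambda/(\lambda-1)$) and on~$\phi$.
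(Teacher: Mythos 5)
Your proposal is correct and follows essentially the same route as the paper's proof: the adapted Euclidean structure built from $\uD^2\phi(\eta)$, the tangent balls $\oballF{\phi}{a+\lambda(x-a)}{\lambda r}$ and $\oballF{\phi}{y}{r_y}$ with the comparison points $d$ and $e$, the three inclusion-derived height inequalities, the second-order Taylor expansions of the graphs from Remark~\ref{rem:graph} with errors controlled by the modulus of continuity of $\uD^2 H$, and the final absorption using $\lambda^{-1/2}<1$. The only differences are cosmetic (scalar rather than $X$-valued graphs, slightly different bookkeeping of the $r_y/r_x$ factor and of the final normal-component estimate, which the paper splits into the cases $q(a-c)\ge 0$ and $q(a-c)<0$).
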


\begin{proof}
    Clearly we can assume $a \ne b$ and $y \in X \without K$. Define
    \begin{gather}
        r_x = \da{K}{\phi}(x) = \phi(a-x) \,, \quad r_y = \da{K}{\phi}(y)
        = \phi(b-y) \,, \\ c = y + \frac{r_y}{r_x} (a-x) \,, \quad w =
        a+\lambda(x-a) \,, \quad \eta = \frac{a-x}{\phi(a-x)} \,, \quad T =
        \Tan(S,\eta) \,.
    \end{gather}
    Note for the record (see~Figure~\ref{fig:lipxi})
    \begin{displaymath}
        a \in \Bdry{\cballF{\phi}x{r_x}} \cap \Bdry{\cballF{\phi}w{\lambda r_x}}
        \without \cballF{\phi}y{r_y}\,, \quad b,c \in \Bdry{\cballF{\phi}y{r_y}}
        \,, \quad b \notin \cballF{\phi}x{r_x} \,.
    \end{displaymath}
    Recall~\ref{rem:naming-convention} and define
    \begin{displaymath}
        R = R_{\text{\ref{lem:pi-injective}}}(\tfrac 12) \,, \quad H =
        H_{\text{\ref{rem:graph}}}(\tfrac 12,\eta) \,, \quad M =
        M_{\text{\ref{rem:graph}}}(\tfrac 12,\eta) \,, \quad C =
        C_{\text{\ref{rem:graph}}}(\tfrac 12,\eta) \,.
    \end{displaymath}
    Let $q \in X^*$ be such that $q(\eta) = -1$ and $\ker q = T$. Note that
    $\uD^2 \phi(\eta)(\eta,\eta) = 0$ by one-homogeneity of~$\phi$. Let
    $ B : X \times X \to \R $ be the bilinear form such that
    \begin{displaymath}
        \label{eq:B-def}
        B(u,v) = \uD^2 \phi(\eta)(\pi(\eta)u, \pi(\eta)v) + q u \cdot q v
        \quad \text{for $u,v \in X$} \,.
    \end{displaymath}
    By our assumption on $\uD^2 \phi(\eta)$ the map $B$ defines a~scalar product
    on~$X$. In the sequel of this proof we~shall assume the Euclidean structure
    on~$X$ comes from~$B$. In~particular, we~shall use the notations
    \begin{equation}
        \label{eq:Euclidean-str}
        \project{T} = \pi(\eta) \,, \quad u \bullet v = B(u,v) \,, \quad
        \text{and} \quad
        |u| = B(u,u)^{1/2}
        \quad \text{for $u,v \in X$} \,.
    \end{equation}
    \begin{figure}[!htb]
        \centering \includegraphics{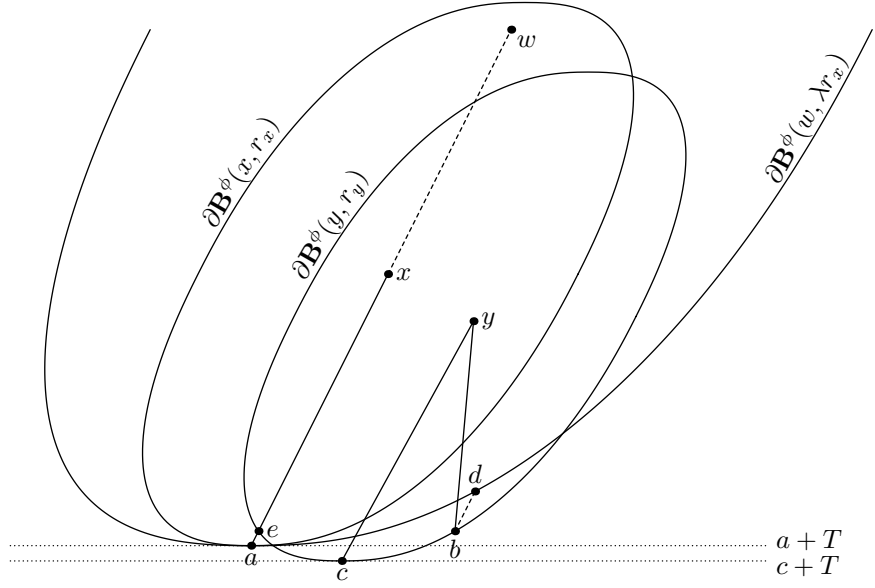}
        \caption{We introduce a Euclidean structure on $X$ so that $x-a$ is
          orthogonal to $T$.}
        \label{fig:lipxi}
    \end{figure}
    Let $\omega_{\lambda}$ be the map obtained from~\ref{lem:xi-uni-cont}. Set
    \begin{gather}
        \Delta_1 = \sup \bigl( \{1\} \cup \bigl\{ |u| : u \in S \bigr\} \bigr)
        \,, \quad \Delta_2 = \min \{ 1 \,, \lambda - 1 \} \,, \\ \sigma(r) =
        \sup \bigl\{ \| \uD^2 H(\zeta) - \uD^2 H(\chi) \| : \zeta,\chi \in C \,,
        |\zeta - \chi| \le r \bigr\} \quad \text{for $0 < r < \infty$} \,,
    \end{gather}
    where the operator norm of the bilinear map
    $\uD^2 H(\zeta) - \uD^2 H(\chi) : X \times X \to X$ is taken with respect to
    the Euclidean structure on~$X$ defined by~\eqref{eq:Euclidean-str}. Choose
    $\varepsilon \in \R$ so that
    \begin{gather}
        \label{eq:eps-choice} 0 < \Delta_1 \varepsilon < 2^{-5} \Delta_2 r_x
        \quad \text{and} \quad \sigma\bigl(4 \Delta_1 (
        \omega_{\lambda}(\varepsilon/r_x) + \varepsilon/r_x ) \bigr) \le 2^{-5}
        \Delta_2 \,.
    \end{gather} Assume $\phi(x-y) \le \varepsilon$. Note that
    \begin{gather}
        \Tan(\Bdry{\cballF{\phi}{y}{r_y}},c) =
        \Tan(\Bdry{\cballF{\phi}{w}{\lambda r_x}},a) =
        \Tan(\Bdry{\cballF{\phi}{x}{r_x}},a) = T \\
        \label{eq:rx-ry} \text{and} \quad \phi(y + (a-x) - c) = |r_y - r_x| <
        \phi(x-y) \le \varepsilon \\ \text{so} \quad
        \label{eq:Fac} \phi(a-c) \le \phi(a-(y + a-x)) + \phi((y + a-x) - c) < 2
        \phi(x-y) \le 2 \varepsilon \,.
    \end{gather} Set $E = \project{T} \lIm C \rIm$ and define
    \begin{gather}
        g_y : \scale{r_y} \lIm E \rIm \to X \quad \text{and} \quad g_w :
        \scale{\lambda r_x} \lIm E \rIm \to X \\ \text{by} \quad g_y(z) = r_y
        H(z/r_y) + y \quad \text{for $z \in \scale{r_y} \lIm E \rIm$} \\
        \text{and} \quad g_w(z) = \lambda r_x H(z/(\lambda r_x)) + w \quad
        \text{for $z \in \scale{\lambda r_x} \lIm E \rIm$}
    \end{gather} so that
    \begin{gather}
        \im g_y = \trans{y} \circ \scale{r_y} \lIm M \rIm \subseteq
        \Bdry{\cballF{\phi}y{r_y}} \,, \quad g_y(0) = c \,, \\ \im g_w =
        \trans{w} \circ \scale{\lambda r_x} \lIm M \rIm \subseteq
        \Bdry{\cballF{\phi}w{\lambda r_x}} \,, \quad g_w(0) = a \,.
    \end{gather} Recall that $H = H \circ \project{T}$ and $a-x,c-y \in \ker
    \project{T} = T^{\perp} = \lin\{\eta\}$. Set
    \begin{displaymath}
        d = g_w(\project{T}(b-a)) \quad \text{and} \quad e =
        g_y(\project{T}(a-c))
    \end{displaymath} and observe that
    \begin{gather}
        \label{eq:badacace} \project{T}(b-a) = \project{T}(d-a) \,, \quad
        \project{T}(a-c) = \project{T}(e-c) \,, \\ z = g_y \circ \project{T}
        (z-c) \quad \text{if $z \in \im g_y$} \,, \quad z = g_w \circ
        \project{T} (z-a) \quad \text{if $z \in \im g_w$} \,, \\
        \label{eq:qba-qda} b \in \Bdry{\cballF{\phi}{y}{r_y}} \without
        \cballF{\phi}{w}{\lambda r_x} \,; \quad \text{hence}\,, \quad q(b-a) <
        q(d-a) \,, \\
        \label{eq:qac-qec} a \in \Bdry{\cballF{\phi}{w}{\lambda r_x}} \without
        \cballF{\phi}{y}{r_y} \,; \quad \text{hence}\,, \quad q(a-c) < q(e-c)
        \,.
    \end{gather} Recalling~\ref{rem:graph} we see that
    \begin{displaymath}
        \uD g_w(0) = \uD H(0) | T = \id{T} \quad \text{and} \quad \uD^2 g_w(0) =
        - \eta (\lambda r_x)^{-1} \uD^2\phi(\eta) \,;
    \end{displaymath} thus, since $|\eta| = 1$ and $q(-\eta) = 1$ the Taylor
    formula~\cite[3.1.11, p.~220]{Federer1969} yields
    \begin{multline}
        \label{eq:qda-Taylor}
        \bigl| q(d-a) - (2 \lambda r_x)^{-1} |\project{T}(d-a)|^2 \bigr|
        \\
        = \bigl| d-a - \project{T} (d-a)
        - (2 \lambda r_x)^{-1} |\project{T}(d-a)|^2 (-\eta) \bigr|
        \\
        = \bigl| g_w(\project{T}(d-a)) - g_w(0)
        - \bigl\langle \project{T}(d-a) \,, \uD g_w(0) \bigr\rangle \tfrac{1}{2}
        - \bigl\langle \project{T}(d-a) \odot \project{T}(d-a) ,\, \uD^2g_w(0)
        - \bigr\rangle
        \bigr|
        \\
        = \bigl| \textstyle{\int_{0}^{1}} (1-s) \bigl\langle \project{T}(d-a)
        \odot \project{T}(d-a) ,\, \uD^2g_w(s \project{T}(d-a)) - \uD^2g_w(0)
        \bigr\rangle \ud \LM^1(s) \bigr|
        \\
        \le (2\lambda r_x)^{-1}
        |\project{T}(d-a)|^2 \sigma
        \bigl( |\project{T}(d-a)| (\lambda r_x)^{-1} \bigr) \,.
    \end{multline} Repeating the above computation twice with $g_y$, $c$, $e$
    and $g_y$, $c$, $b$ in place of $g_w$, $a$, $d$ we get
    \begin{gather}
        \label{eq:qac-est} \bigl| q(e-c) - (2r_y)^{-1} |\project{T}(a-c)|^2
        \bigr| \le (2r_y)^{-1} |\project{T}(a-c)|^2 \sigma \bigl(
        |\project{T}(a-c)| r_y^{-1} \bigr) \\ \text{and} \quad \bigl| q(b-c) -
        (2r_y)^{-1} |\project{T}(b-c)|^2 \bigr| \le (2r_y)^{-1}
        |\project{T}(b-c)|^2 \sigma \bigl( |\project{T}(b-c)| r_y^{-1} \bigr)
        \,.
    \end{gather} Consequently, using~\eqref{eq:badacace}, \eqref{eq:qba-qda},
    and~\eqref{eq:qac-qec}
    \begin{multline}
        \label{eq:qbc-est} 0 < q(b-c) = q(b-a) + q(a-c) \le q(d-a) + q(e-c) \\
        \le \frac{|\project{T}(b-a)|^2}{2\lambda r_x} \bigl( 1 + \sigma \bigl(
        |\project{T}(b-a)| (\lambda r_x)^{-1} \bigr) \bigr) +
        \frac{|\project{T}(a-c)|^2}{2r_y} \bigl( 1 + \sigma \bigl(
        |\project{T}(a-c)| r_y^{-1} \bigr) \bigr)
    \end{multline} and
    \begin{multline}
        |\project{T}(b-c)|^2
        \le \frac{2r_y |q(b-c)|}{1 - \sigma(|\project{T}(b-c)|r_y^{-1})} \\ \le
        \frac{r_y (1 + \sigma(|\project{T}(b-a)|(\lambda r_x)^{-1})) }{\lambda
          r_x (1 - \sigma(|\project{T}(b-c)|r_y^{-1}))} |\project{T}(b-a)|^2 +
        \frac{1 + \sigma(|\project{T}(a-c)|r_y^{-1})}{1 -
          \sigma(|\project{T}(b-c)|r_y^{-1})} |\project{T}(a-c)|^2 \,;
    \end{multline} hence,
    \begin{multline}
        \label{eq:Tb-a-est}
        |\project{T}(b-a)|
        \le |\project{T}(b-c)| + |\project{T}(c-a)| \\ \le \Bigl( 1 + \Bigl(
        \tfrac{1 + \sigma(|\project{T}(a-c)|r_y^{-1})}{1 -
          \sigma(|\project{T}(b-c)|r_y^{-1})} \Bigr)^{1/2} \Bigr)
        |\project{T}(c-a)| + \Bigl( \tfrac{r_y (1 +
          \sigma(|\project{T}(b-a)|(\lambda r_x)^{-1})) }{\lambda r_x (1 -
          \sigma(|\project{T}(b-c)|r_y^{-1}))} \Bigr)^{1/2} |\project{T}(b-a)| \,.
    \end{multline} Recalling \eqref{eq:rx-ry}, \eqref{eq:Fac}, $\phi(x-y) \le
    \varepsilon$, $\arho{K}{\phi}(x) \ge \lambda$ and
    using~\ref{lem:xi-uni-cont} we obtain
    \begin{gather}
        r_x^{-1}|\project{T}(a-b)| \le r_x^{-1} \Delta_1 \phi(a-b) \le \Delta_1
        \omega_{\lambda}(\varepsilon/r_x) \,, \\ r_y^{-1} |\project{T}(a-c)| \le
        r_y^{-1} \Delta_1 \phi(a-c) \le r_x^{-1} 2 \Delta_1 \varepsilon r_x /
        r_y \le r_x^{-1} 4 \Delta_1 \varepsilon \,, \\ r_y^{-1}
        |\project{T}(b-c)| \le r_x^{-1} |\project{T}(b-a)|r_x/r_y + r_y^{-1}
        |\project{T}(a-c)| \le 4 \Delta_1 \bigl(
        \omega_{\lambda}(\varepsilon/r_x) + \varepsilon / r_x \bigr) \,.
    \end{gather} Employing~\eqref{eq:eps-choice}, \eqref{eq:rx-ry}, and noting
    that
    \begin{displaymath}
        \frac{r_y}{r_x} \le 1 + 2^{-5}\Delta_2 \,, \quad \frac{1 + 2^{-5}
          \Delta_2}{1 - 2^{-5} \Delta_2} \le 1 + \frac{\Delta_2}{32} \,, \quad
        \frac{r_y}{r_x} \frac{1 + 2^{-5} \Delta_2}{1 - 2^{-5} \Delta_2} \le 1 +
        \frac{\Delta_2}{2}
    \end{displaymath} we obtain
    \begin{gather}
        \frac{1}{\lambda} \cdot \frac{r_y}{r_x} \cdot \frac{1 +
          \sigma(|\project{T}(b-a)|(\lambda r_x)^{-1})}{1 -
          \sigma(|\project{T}(b-c)|r_y^{-1})} \le \frac{1}{\lambda} \Bigl( 1 +
        \frac{\lambda-1}{2} \Bigr) = \frac{\lambda + 1}{2\lambda} < 1 \,, \\
        \frac{1 + \sigma(|\project{T}(a-c)|r_y^{-1})}{1 -
          \sigma(|\project{T}(b-c)|r_y^{-1})} \le \frac{33}{32} \le 4 \,, \quad
        \Bigl( 1 - \Bigl( \frac{\lambda + 1}{2 \lambda} \Bigr)^{1/2} \Bigr)^{-1}
        = 2\lambda \frac{1 + \bigl( \frac{\lambda + 1}{2 \lambda}
          \bigr)^{1/2}}{\lambda - 1} \le \frac{4 \lambda}{\lambda - 1} \,;
    \end{gather} hence; plugging these estimates to~\eqref{eq:Tb-a-est} yields
    \begin{equation}
        \label{eq:Tba-Tca}
        |\project{T}(b-a)| \le \frac{12\lambda}{\lambda - 1} |\project{T}(c-a)|
        |\,.
    \end{equation} Note that $|\project{T}(a-c)| \le 2 \varepsilon \Delta_1 \le
    \min\{r_x,r_y\}$ by~\eqref{eq:eps-choice} and~\eqref{eq:rx-ry}. In case
    $q(a-c) \ge 0$ we combine~\eqref{eq:Tba-Tca}, \eqref{eq:qbc-est},
    \eqref{eq:qac-est}, \eqref{eq:qac-qec} to get
    \begin{multline}
        \label{eq:qba-final-est}
        |q(b-a)| \le q(b-c) + q(a-c)
        \\ \le |\project{T}(b-a)|^2 \frac{1 + 2^{-5} \Delta_2}{2\lambda r_x} +
        |\project{T}(a-c)|^2 \frac{1 + 2^{-5} \Delta_2}{r_y} \le \Bigl( 2 +
        \frac{ (12\lambda)^2}{(\lambda - 1)^2} \Bigr) |\project{T}(a-c)| \,.
    \end{multline} If $q(a-c) < 0$, then $q(b-a) = q(b-c) + q(c-a) \ge 0$ and we
    get by~\eqref{eq:qba-qda}, \eqref{eq:qda-Taylor}, \eqref{eq:badacace},
    \eqref{eq:Tba-Tca}
    \begin{displaymath}
        |q(b-a)| = q(b-a) \le q(d-a)
        \le |\project{T}(b-a)|^2 \frac{1 + 2^{-5} \Delta_2}{2\lambda r_x} \le
        \frac{ (12\lambda)^2}{\lambda(\lambda - 1)^2} |\project{T}(a-c)| \,.
    \end{displaymath} As a result the final estimate of~\eqref{eq:qba-final-est}
    holds regardless of the sign of $q(a-c)$. Employing~\eqref{eq:Fac}
    \begin{multline}
        |b-a| \le |\project{T}(b-a)| + |q(b-a)|
        \le \Bigl( \tfrac{12\lambda}{\lambda - 1} + 2 + \tfrac{
          (12\lambda)^2}{(\lambda - 1)^2} \Bigr) |\project{T}(c-a)| \\ \le \Bigl(
        \tfrac{12\lambda}{\lambda - 1} + 2 + \tfrac{ (12\lambda)^2}{(\lambda -
          1)^2} \Bigr) \Delta_1 \phi(c-a) \le \Gamma \phi(x-y) \,,
    \end{multline} where $\Gamma = 2 \Delta_1 (\tfrac{12\lambda}{\lambda - 1} +
    2 + \tfrac{ (12\lambda)^2}{(\lambda - 1)^2})$.
\end{proof}

\begin{Corollary}
    \label{cor:xi-lip-cont} Assume $\phi$ is uniformly convex, $K \subseteq X$
    is closed, $0 < s < t < \infty$, $1 < \lambda < \infty$, and
    \begin{displaymath}
        K_{\lambda,s,t} = \bigl\{ x : \arho{K}{\phi}(x) \ge \lambda \,, s \le
        \da{K}{\phi}(x) \le t \bigr\} \,.
    \end{displaymath} Then there exists $\Gamma \in \R$ depending only on $s$,
    $t$, $\lambda$, and~$\phi$ such that
    \begin{displaymath}
        \phi( \anp{K}{\phi}(a) - y ) \le \Gamma \phi(a-b) \quad \text{whenever
          $a \in K_{\lambda,s,t}$, $b \in \Real{\adim}$, $y \in \anp{K}{\phi}(b)$, and
          $\da{K}{\phi}(b) \le t$} \,.
    \end{displaymath} In~particular, $\anp{K}{\phi}|K_{\lambda,s,t}$ is
    Lipschitz continuous.
\end{Corollary}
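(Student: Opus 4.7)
The plan is to extract a uniform version of Theorem~\ref{thm:lipxi-minkowski} and then bootstrap the local estimate to a global one by a trivial bound on pairs that are far apart.

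\textbf{Step 1: Pointwise positive definiteness of $\uD^2\phi$.} The theorem requires $\uD^2\phi(\eta)(u,u) > 0$ for all tangent $u \in \Tan(S,\eta) \without \{0\}$. Since $\phi$ is uniformly convex of class $\cnt{2}$ off the origin, there is $\gamma > 0$ such that $\phi - \gamma|\cdot|$ is convex, so $\uD^2\phi(\eta)(u,u) \geq \gamma \uD^2|\cdot|(\eta)(u,u) > 0$ on tangent directions. Hence the hypothesis of Theorem~\ref{thm:lipxi-minkowski} is satisfied at every $a \in K_{\lambda,s,t}$ with $\eta = (a - \anp{K}{\phi}(a))/\da{K}{\phi}(a)$.

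\textbf{Step 2: Uniformise $\varepsilon$ in Theorem~\ref{thm:lipxi-minkowski}.} Inspecting the choice~\eqref{eq:eps-choice}, the admissibility condition for $\varepsilon$ in that theorem reads
\begin{equation*}
    \Delta_1 \varepsilon < 2^{-5}\Delta_2 \,\da{K}{\phi}(x)
    \quad \text{and} \quad
    \sigma\bigl(4\Delta_1(\omega_\lambda(\varepsilon/\da{K}{\phi}(x)) + \varepsilon/\da{K}{\phi}(x))\bigr) \leq 2^{-5}\Delta_2,
\end{equation*}
where $\Delta_1,\Delta_2,\sigma,\omega_\lambda$ depend only on $\lambda$ and $\phi$. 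Since $\omega_\lambda(r) \to 0$ and $\sigma(r) \to 0$ as $r \downarrow 0$, and since $s \leq \da{K}{\phi}(x) \leq t$ on $K_{\lambda,s,t}$, I can pick a single $\varepsilon_0 = \varepsilon_0(s,t,\lambda,\phi) > 0$ fulfilling both inequalities uniformly for all $x \in K_{\lambda,s,t}$. With this choice the theorem yields the constant $\Gamma_0 = \Gamma(\lambda,\phi)$ such that whenever $a \in K_{\lambda,s,t}$, $b \in X$, $y \in \anp{K}{\phi}(b)$ and $\phi(a-b) \leq \varepsilon_0$, one has $\phi(\anp{K}{\phi}(a) - y) \leq \Gamma_0 \,\phi(a-b)$.

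\textbf{Step 3: Handling far pairs.} For $a \in K_{\lambda,s,t}$, $b \in X$ with $\da{K}{\phi}(b) \leq t$, $y \in \anp{K}{\phi}(b)$, and $\phi(a-b) > \varepsilon_0$, the triangle inequality gives
\begin{equation*}
    \phi(\anp{K}{\phi}(a) - y) \leq \da{K}{\phi}(a) + \phi(a-b) + \da{K}{\phi}(b) \leq 2t + \phi(a-b) \leq \bigl(1 + 2t/\varepsilon_0\bigr) \phi(a-b).
\end{equation*}
Setting $\Gamma = \max\bigl\{\Gamma_0,\, 1 + 2t/\varepsilon_0\bigr\}$ yields the asserted global estimate. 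The final sentence follows by specialising to $b \in K_{\lambda,s,t}$, where $\arho{K}{\phi}(b) \geq \lambda > 1$ forces $\anp{K}{\phi}(b)$ to be a singleton by~\ref{single-valuedness_of_anp}.

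The only nontrivial point is Step~2, namely verifying from the proof of Theorem~\ref{thm:lipxi-minkowski} that the admissible~$\varepsilon$ depends on $\da{K}{\phi}(x)$ only through the bound $\da{K}{\phi}(x) \in [s,t]$; once this is checked, the rest is a routine covering-free globalisation via the trivial long-range bound.
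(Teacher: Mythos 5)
Your proof is correct and follows essentially the same route as the paper's: apply Theorem~\ref{thm:lipxi-minkowski} with an $\varepsilon$ chosen uniformly over $K_{\lambda,s,t}$ for nearby pairs, and absorb the far pairs via the trivial triangle-inequality bound $\phi(\anp{K}{\phi}(a)-y)\le 2t+\phi(a-b)\le(1+2t/\varepsilon)\phi(a-b)$. Your Steps~1 and~2 merely make explicit two points the paper leaves implicit, namely the positivity of $\uD^2\phi(\eta)$ on $\Tan(S,\eta)$ under uniform convexity and the fact that the admissible $\varepsilon$ of~\eqref{eq:eps-choice} is monotone in $\da{K}{\phi}(x)$, so that the single choice $\varepsilon_{\text{\ref{thm:lipxi-minkowski}}}(\lambda,\phi,s)$ suffices.
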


\begin{proof}
    Assume $a \in K_{\lambda,s,t}$, $b \in \Real{\adim}$, $y \in \anp{K}{\phi}(b)$,
    $x \in \anp{K}{\phi}(a)$, and $\da{K}{\phi}(b) \le t$. Let $\varepsilon =
    \varepsilon_{\ref{thm:lipxi-minkowski}}(\lambda,\phi,s)$. If $\phi(a-b) \le
    \varepsilon$, then the conclusion follows from~\ref{thm:lipxi-minkowski}.
    In case $\phi(a-b) > \varepsilon$, we have
    \begin{displaymath}
        \phi(x - y) \le \phi(x-a) + \phi(a-b) + \phi(b-y) \le \phi(a-b) + 2t \le
        \phi(a-b) ( 1 + 2t/\varepsilon ) \,.  \qedhere
    \end{displaymath}
\end{proof}

\begin{Remark}
    Observe that the bound for the Lipschitz constant of
    $\anp{K}{\phi}|K_{\lambda,s,t}$ obtained in~\ref{thm:lipxi-minkowski}
    explodes with $\lambda \to 1^+$.  This is in accordance
    with~\ref{rem:parabola}.
\end{Remark}

\begin{proof}[Proof of~Theorem \ref{intro_Lipschitz}]
    Since
    \begin{displaymath}
        \grad \da{K}{\phi}(x)
        = \grad \phi(x - \anp{K}{\phi}(x))
        \quad \text{for $x \in \dmn (\grad \da{K}{\phi}) \without K = \Unp{\phi}(K) \without K$}
    \end{displaymath}
    by~\ref{basic_properties_of_da_and_anp}\ref{basic_properties_of_da_and_anp_3}
    we obtain the claim directly from~\ref{cor:xi-lip-cont}.
\end{proof}

\section{Twice differentiability points}
\label{sec:twice-diff}
In this section we prove Theorem~\ref{intro:twice_diff_points}. Recall that
$\ar{K}{\phi}$ was defined
by~\eqref{eq:def-r-phi}, 
pointwise differentiability in~\ref{def:pt-diff}, $\phi$-cut locus
$\Cut^{\phi}(K)$ by~\eqref{eq:def-Cut-locus}, and singular sets
$\Sigma^{\phi}(K)$ and~$\Sigma^{\phi}_2(K)$ in~\eqref{eq:def-Sigma}
and~\eqref{eq:def-Sigma2}.

\begin{Remark}
    \label{remark:inclusions}
    It is well known, and follows from~\ref{single-valuedness_of_anp},
    \ref{basic_properties_of_da_and_anp}\ref{basic_properties_of_da_and_anp_3}
    and~\cite[Theorem~3B]{MR1434446}, that
    \begin{displaymath}
        \Sigma^{\phi}(K) \subseteq \Cut^{\phi}(K) \subseteq \Clos{\Sigma^{\phi}(K)}
        \,.
    \end{displaymath}
\end{Remark}

\begin{Remark}
    \label{rem:parabola}
    Consider the parabola $K = \{ (x,x^2) : x \in \R \}$ with centre of
    curvature at the point $a = (0,\frac 12) \in \R^2$. Then
    $a \in \Cut(K) \cap \Unp{}(K)$. We look at the behaviour of $\anp{K}{}$ on
    the line $\{ (x,\frac 12) : x \in \R \}$. Whenever $0 < x < 8^{-1/2}$,
    setting $b = (2x,\frac 12)$, we have $\anp{K}{}(b) = (\sqrt{x},x)$; hence,
    $\anp{K}{}$ is not differentiable at $a$ and $\da{K}{}$ is not pointwise
    differentiable of order~$2$ at~$a$. Note also that $\anp{K}{}$ is not even
    Lipschitz continuous in any neighbourhood of~$a$. On the other
    hand~\ref{basic_properties_of_da_and_anp}\ref{basic_properties_of_da_and_anp_3}
    yields differentiability of~$\da{K}{}$ at~$a$ (which can also be checked by
    direct computation). We conclude $a \in \Sigma_2(K) \without \Sigma(K)$.
    In~\ref{lem:Cut-subs-Sigma} we prove that this is a~generic situation for
    points in $\Cut(K) \cap \Unp{}(K)$.
\end{Remark} 

\begin{Lemma}
    \label{lem:Cut-subs-Sigma}
    Assume $K \subseteq \R^{\adim}$ is closed, $ x \in \R^{\adim} \without K $,
    and $ \da{K}{\phi} $ is pointwise differentiable of order~$2$ at~$x$.
    
    Then $ \arho{K}{\phi}(x)> 1 $.  In~particular $ \Cut^{\phi}(K) \subseteq
    \Sigma^{\phi}_2(K)$.
\end{Lemma}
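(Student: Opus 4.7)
The plan is to argue by contrapositive: assume $\arho{K}{\phi}(x) = 1$ and show that $\da{K}{\phi}$ fails to be pointwise differentiable of order~$2$ at~$x$.

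\textbf{Reduction.} If $\anp{K}{\phi}(x)$ is not a singleton, then $\da{K}{\phi}$ is not even differentiable at $x$ by Lemma~\ref{basic_properties_of_da_and_anp}\ref{basic_properties_of_da_and_anp_3}, and the conclusion is immediate. I may therefore assume $\anp{K}{\phi}(x) = \{a\}$ and set $r = \da{K}{\phi}(x)$, $\eta = (x-a)/r$; the hypothesis $\arho{K}{\phi}(x) = 1$ then reads $\ar{K}{\phi}(a,\eta) = r$.

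\textbf{Reformulation via $\anp{K}{\phi}$.} By Lemma~\ref{basic_properties_of_da_and_anp}\ref{basic_properties_of_da_and_anp_6}, it suffices to show that the multivalued map $\anp{K}{\phi}$ is not differentiable at $x$ in the sense of Definition~\ref{def:multi-diff}. Suppose for contradiction that it is, with derivative $L \in \Hom(\Real{\adim},\Real{\adim})$. Testing the differentiability inequality on the segment $x - t\eta$ with $t > 0$ small, where Remark~\ref{single-valuedness_of_anp} gives $\anp{K}{\phi}(x - t\eta) = \{a\}$, one immediately obtains $L\eta = 0$.

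\textbf{Exploiting saturation to build the contradiction.} Since $\ar{K}{\phi}(a,\eta) = r$, for every $s > 0$ one has $\da{K}{\phi}(a + (r+s)\eta) < r+s$; pick $s_n \searrow 0$ and $b_n \in \anp{K}{\phi}(a + (r+s_n)\eta) \setminus \{a\}$. Weak continuity of $\anp{K}{\phi}$ (Lemma~\ref{basic_properties_of_da_and_anp}\ref{basic_properties_of_da_and_anp_5}) ensures $b_n \to a$, and differentiability with $L\eta = 0$ yields $|b_n - a| = o(s_n)$. I would then compare, on the one hand, the pointwise second-order Taylor expansion of $\da{K}{\phi}$ at $x$ (using also $\pt\uD^2\da{K}{\phi}(x)(\eta,\eta) = 0$, which follows from the exact identity $\da{K}{\phi}(x - t\eta) = r - t$), and, on the other hand, the Taylor expansion of $\phi$ around $-(r+s_n)\eta$ combined with uniform convexity, to conclude that $|b_n - a|$ must in fact obey a lower bound incompatible with $|b_n - a| = o(s_n)$.

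\textbf{The main obstacle} is precisely extracting this sharp lower bound: in the Euclidean prototype of Remark~\ref{rem:parabola} one computes $|b_n - a| \sim s_n^{1/2}$ by hand, and the general case is handled by applying Lemma~\ref{Gariepy-Pepe} to realize the level set $S^{\phi}(K,r)$ near~$x$ as the graph of a semiconcave function $f : T \to \R$, which must be pointwise twice differentiable at $\project{T}x$ with $\pt\uD f(\project{T}x) = 0$; the cut condition $\ar{K}{\phi}(a,\eta) = r$ is then shown to be incompatible with a genuine quadratic Taylor expansion of~$f$ at $\project{T}x$, yielding the required blow-up of $|b_n - a|/s_n$. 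The ``In particular'' statement follows immediately from Lemma~\ref{rem:rho-usc}, which identifies $\Cut^{\phi}(K)$ with $\{y : \arho{K}{\phi}(y) = 1\}$, so the main conclusion places every point of $\Cut^{\phi}(K)$ into $\Sigma^{\phi}_{2}(K)$.
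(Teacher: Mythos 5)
Your reduction to the non-differentiability of $\anp{K}{\phi}$ at $x$ via Lemma~\ref{basic_properties_of_da_and_anp}\ref{basic_properties_of_da_and_anp_6}, the observation $L\eta=0$, and the consequence $|b_n-a|=o(s_n)$ are all correct, but the step you yourself flag as the main obstacle is a genuine gap, and the specific mechanism you propose for closing it fails. There is no lower bound forcing $|b_n-a|/s_n$ to blow up: the nearest point to $x+s\eta$ may converge to $a$ arbitrarily fast even when $\arho{K}{\phi}(x)=1$. For instance, let $\phi$ be Euclidean, $a=0$, $x=r\eta$, and $K=\{a\}\cup\{c_n : n \in \natp\}$, where $c_n$ lies on $\Bdry{\cball{x+s_n\eta}{r+s_n}}$ at angular distance $\theta_n$ from the tangency point $a$, with $s_n\downarrow 0$ and $\theta_n\ll s_n/r$. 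Then $(a,\eta)\in N(K)$, $\ar{K}{}(a,\eta)=r$, yet for $s$ slightly above $s_n$ the nearest point to $x+s\eta$ is $c_n$ with $|c_n-a|\approx r\theta_n=o(s_n)$. (In this example the failure of differentiability of $\anp{K}{}$ at $x$ is visible only off the ray, near the bisectors of the segments $[a,c_n]$, which approach $x$ at distance $\approx\tfrac12\theta_n s_n$ while the jump of $\anp{K}{}$ across them has size $\approx r\theta_n$.) So the parabola heuristic $|b_n-a|\sim s_n^{1/2}$ is not the general mechanism, and no contradiction can be extracted by looking only at the ray $\{x+s\eta : s>0\}$.

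Your closing claim --- that ``the cut condition is then shown to be incompatible with a genuine quadratic Taylor expansion of~$f$'' --- is, modulo Lemma~\ref{Gariepy-Pepe}, exactly the assertion of the lemma, so as written the argument is circular at its crucial point. The paper's actual argument at this stage is different and involves no lower bound on $|b_n-a|$: from the second-order Taylor polynomial of $f$ at $\project{T}x$ one extracts an interior tangent ball, i.e.\ an $s>0$ with $\cballF{\phi}{x+s\nu}{s}\cap S^{\phi}(K,r)=\{x\}$; then for $y=x+\delta\nu$ with $\delta$ small and any $b\in\anp{K}{\phi}(y)$, a connectedness (crossing) argument shows the segment from $y$ to $b$ meets $S^{\phi}(K,r)$ at a point $z$ with $\phi(y-z)\ge\phi(y-x)$, and since $\phi(z-b)\ge r$ one gets $\phi(y-b)\ge\phi(y-a)$, hence $a\in\anp{K}{\phi}(y)$ and $\da{K}{\phi}(x+\delta\nu)=r+\delta$, which is precisely $\arho{K}{\phi}(x)>1$. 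That tangent-ball-plus-crossing argument, together with the use of weak continuity of $\anp{K}{\phi}$ to keep the segment inside the graph neighbourhood, is the missing core of the proof.
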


\begin{proof}
    Define $r = \da{K}{\phi}(x) $, $ \nu = \an{K}{\phi}(x) $,
    $ a = \anp{K}{\phi}(x) $ and
    $ T = \R^{\adim} \cap \{ v : v \bullet \grad \da{K}{\phi}(x) \}
    $. We~use~\ref{Gariepy-Pepe} to find $ r_1 > 0 $ and a~continuous function
    $ f : T \to T^\perp $ which is pointwise twice differentiable at
    $ \project{T}x $ with $ \Der f(\project{T}x) = 0 $ such that, defining
    $ M = \{\chi + f(\chi) : \chi \in T \} $ and $ U = \oballF{\phi}{x}{r_1}$,
    it~holds $ U \cap S^{\phi}(K,r) = U \cap M $. Decreasing $r_1 > 0$ if necessary, we
    infer from the pointwise twice differentiability of $ f $ in
    $ \project{T}x $ that there exists a polynomial function
    $ P : T \to T^\perp $ of degree at most~$2$ such that
    \begin{equation}
        \label{twice diff and cut locus eq1}
        S^{\phi}(K,r) \cap U \subseteq \R^{\adim} \cap
        \{ y :
        P(\project{T}y) \bullet \grad \da{K}{\phi}(x) \geq y \bullet \grad \da{K}{\phi}(x)
        \} \,.
    \end{equation} Decreasing $r_1 > 0$ even more, we can assume also that $ U
    \without S^{\phi}(K,r) $ is the union of two connected and disjointed open sets $
    U^- $ and $ U^+ $ such that
    \begin{displaymath}
        \{ \chi + P(\chi): \chi \in T \} \cap U \subseteq \Clos(U^+) \cap U \,.
    \end{displaymath} Since $ \oballF{\phi}{a}{r} \cap S^{\phi}(K,r) = \varnothing $ we
    infer $ U \cap \oballF{\phi}{a}{r} \subseteq U^- $. Moreover, it follows
    from \eqref{twice
      diff and cut locus eq1} that there exists $ s > 0 $ such that
    $\oballF{\phi}{x + s \nu}{s} \subseteq U^+ $ (notice $ s < r_1 $) and
    \begin{equation}
        \label{twice diff and cut locus eq2} \cballF{\phi}{x + s \nu}{s} \cap
        S^{\phi}(K,r) = \{ x \} \,.
    \end{equation} Choose $ 0 < \epsilon < \frac{r_1}{4} $. The continuity of $
    \anp{K}{\phi} $ and $ \da{K}{\phi} $ at $ x $ implies that there exists $ 0
    < \delta < \epsilon $ such that $ \phi(b-a) < \epsilon $ and $ \phi(b-y) < r
    + \epsilon $ for every $ b \in \anp{K}{\phi}(y) $ and for every $ y \in
    \oballF{\phi}{x}{2\delta} $. Define $ y = x + \delta \nu $, choose $ b \in
    \anp{K}{\phi}(y) $ and let $ \tau = \sup\{ t : 0 \leq t \leq 1, \; \phi (y +
    t(b-y) - a) > r \} $. Notice
    \begin{gather}
        r - \epsilon \leq \phi(y + \tau(b-y) - a) - \phi(b-a) \leq \phi(y+
        \tau(b-y) -b) = (1-\tau)\phi(b-y) \,, \\ \tau \phi(b-y) \leq \phi(b-y) -
        (r-\epsilon) \leq 2 \epsilon \,, \\ \phi(y + \tau(b-y) - x) \leq
        \phi(y-x) + \tau\phi(b-y) \leq \delta + 2\epsilon < r_1 \,.
    \end{gather} Therefore, $y + \tau(b-y) \in U \cap \cballF{\phi}{a}{r}
    \subseteq \Clos{U^-} \cap U $. Since $ y \in U^+ $ we infer there exists $ 0
    < t \leq \tau $ such that $ y + t (b-y) \in S^{\phi}(K,r) $. Defining $ z = y + t
    (b-y) $ and noting that $ \phi(z-b) = \phi(x-a) $ and $ \phi(y-z) \geq
    \phi(y-x) $ by \eqref{twice diff and cut locus eq2}, we infer
    \begin{displaymath}
        \phi(y-b) = \phi(y - z) + \phi(z -b) \geq \phi\big(y - x\big) +
        \phi(x-a) = \phi(y-a) \,,
    \end{displaymath} whence we conclude that $ a \in \anp{K}{\phi}(y) $ and
    consequently $ \arho{K}{\phi}(x) > 1 $.
\end{proof}

\begin{Remark}
    \label{Cut_Locus}
    Since $ \Leb{\adim}(\Sigma^{\phi}_2(K)) =0 $ by the Alexandrov
    theorem~\cite{MR0003051}, it follows that
    \begin{equation*}
        \Leb{\adim}(\Cut^\phi(K)) =0.
    \end{equation*}
    In a Riemannian setting a conclusion analogous to Lemma
    \ref{lem:Cut-subs-Sigma} is contained in \cite{MR3373591}. A proof
    of~$ \Leb{\adim}(\Cut^\phi(K)) =0$ along different lines can be found in the proof
    of~\cite[Theorem 5.9, Claim 1]{DRKS2020ARMA}, see
    also~\cite[Remark~5.10]{DRKS2020ARMA}.
\end{Remark}

In the next result the classical notion of approximate lower limit of a function
plays a central role. Let us first recall this definition.

\begin{Definition}[\protect{cf.\ \cite[2.9.12]{Federer1969}}]
    \label{def:ap-lim-inf}
    Let $\rho : \R^{\adim} \to \R$ be a  function. The \emph{approximate lower limit} of $ \rho $ at $ x $ is defined as 
    \begin{displaymath}
        \ap \liminf_{y \to x} \rho(y) = \sup \R \cap \bigl\{ t :
        \density^{\adim}(\Leb{\adim} \restrict \{ y : \rho(y) < t \},x) = 0
        \bigr\} \,.
    \end{displaymath}
 \end{Definition}

\begin{Remark}\label{remark approximate lower limit}
    If $ \rho $ is $ \mathscr{L}^n $-measurable, then  $\ap \liminf_{y \to x} \rho(y) \ge \sigma \in \R$ if and only if
    \begin{displaymath}
        \density^{\adim}(\Leb{\adim} \restrict \{ y : \rho(y) \geq t  \},x) = 1 \quad \text{whenever
            $-\infty < t < \sigma$} \,.
    \end{displaymath}
\end{Remark}

The approximate lower limit of an arbitrary function always defines a Borel
function.  This fact can be proved using an argument similar to those of
\cite[Lemma 5.1]{MR3978264}.

\begin{Lemma}\label{lem: Borel regularity}
    Suppose $ f : \mathbf{R}^n \rightarrow \overline{\mathbf{R}} $ is an
    arbitrary function and let $ \underline{f} : \mathbf{R}^n \rightarrow
    \overline{\mathbf{R}}$ be defined as
    \begin{equation*}
        \underline{f}(x) = \ap\liminf_{y \to x} f(y) \qquad \textrm{for $ x \in \mathbf{R}^n $.}
    \end{equation*}
    Then $ \underline{f} $ is a Borel function.
\end{Lemma}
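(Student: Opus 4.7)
The plan is to show that every superlevel set $\{x : \underline f(x) > s\}$ is Borel, which is equivalent to Borel measurability of~$\underline f$. Setting $A_t = \{y : f(y) < t\}$ and using the monotonicity $A_{t'} \subseteq A_t$ for $t' \le t$, the definition of the approximate lower limit yields
\begin{displaymath}
    \{x : \underline f(x) > s\}
    = \bigcup_{t \in \Q \cap (s, +\infty)} E(A_t) \,, \qquad \text{where} \qquad
    E(A) := \bigl\{x : \density^{\adim}(\Leb{\adim} \restrict A, x) = 0\bigr\} \,.
\end{displaymath}
Indeed, if $x \in E(A_t)$ for some real $t > s$, monotonicity gives $x \in E(A_{t'})$ for any rational $t' \in (s,t)$. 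Being a countable union, it then suffices to prove that $E(A)$ is Borel for every $A \subseteq \R^{\adim}$.

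For this I~would invoke the standard existence of a Borel measurable envelope for Lebesgue outer measure: since $\Leb{\adim}$ is $\sigma$-finite and Borel regular, for every $A \subseteq \R^{\adim}$ there exists a Borel set $B \supseteq A$ with $\Leb{\adim}(A \cap M) = \Leb{\adim}(B \cap M)$ for every Borel set~$M$. Specialising to the closed balls $M = \cball{x}{r}$ gives $E(A) = E(B)$, which reduces the claim to the case when~$A$ itself is Borel.

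Finally, when $B$ is Borel, the map $\varphi_r(x) = \Leb{\adim}(B \cap \cball{x}{r})$ is continuous in~$x$ for each fixed $r > 0$, because $\Leb{\adim}(\cball{x_1}{r} \triangle \cball{x_2}{r}) \to 0$ as $|x_1 - x_2| \to 0$. Since $\varphi_r(x)$ is monotone increasing in~$r$, the condition $\density^{\adim}(\Leb{\adim} \restrict B, x) = 0$ is equivalent to $\limsup_{r \to 0^+,\, r \in \Q} \varphi_r(x)/r^{\adim} = 0$, and one may write
\begin{displaymath}
    E(B) = \bigcap_{k = 1}^{\infty} \bigcup_{m = 1}^{\infty} \bigcap_{\substack{r \in \Q \\ 0 < r < 1/m}} \bigl\{x : \varphi_r(x) < r^{\adim} \unitmeasure{\adim}/k \bigr\} \,,
\end{displaymath}
which exhibits $E(B)$ as a Borel set. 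The only delicate point is that the sets~$A_t$ need not be Lebesgue measurable; this is precisely what the envelope step resolves, and no other serious obstacle arises.
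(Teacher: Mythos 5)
Your argument is correct. The overall decomposition coincides with the paper's — both proofs express a superlevel set of $\underline f$ as a countable Boolean combination of sets of the form $\{x : \Leb{\adim}(F_t\cap \cball{x}{r}) \le \varepsilon r^{\adim}\}$ with $F_t=\{y:f(y)<t\}$, using monotonicity of $t\mapsto F_t$ to pass to rational $t$ and monotonicity in $r$ to pass to rational radii — but the key lemma making these building blocks Borel is genuinely different. The paper keeps the possibly non-measurable sets $F_t$ and proves directly that $y \mapsto \Leb{\adim}(F_t\cap\oball{y}{r})$ is upper semicontinuous, via continuity from below of the outer measure along the increasing sets $\bigcap_{h\ge k}\oball{y_h}{r}$; this shows the sets $W_{t,i,r}$ are closed. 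You instead quarantine the non-measurability into one standard fact, the existence of a Borel hull $B \supseteq A$ with $\Leb{\adim}(A\cap M)=\Leb{\adim}(B\cap M)$ for every Borel $M$, after which $x\mapsto\Leb{\adim}(B\cap\cball xr)$ is genuinely continuous and the density-zero set is manifestly Borel. Your route invokes a slightly heavier piece of standard measure theory but isolates the measurability issue cleanly; the paper's route is more self-contained and yields the marginally finer information that $\{x : \underline f(x)\ge\sigma\}$ is a countable combination of closed sets. Both versions are complete proofs.
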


\begin{proof}
    For every $ t \in \mathbf{R} $ we define $ F_t = \{x : f(x) < t\} $ and we set 
    \begin{equation*}
        W_{t, i, r} = \{  y : \mathscr{L}^n(\mathbf{U}(y,r) \cap F_t) \leq i^{-1} r^n\}
    \end{equation*}
    for $ t \in \mathbf{R} $, $ i \in \mathbf{Z}_+ $ and $ r > 0 $. Then we
    prove that the set $ W_{t, i, r} $ is a closed subset of $ \mathbf{R}^n $
    for every $ t \in \mathbf{R} $, $ i \in \mathbf{Z}_+ $ and $ r > 0
    $. Choose a sequence $ y_k \in W_{t, i, r} $ that converges to $ y \in
    \mathbf{R}^n $. Noting that
    \begin{equation*}
        \mathbf{U}(y,r) \subseteq \bigcup_{k=1}\bigcap_{h=k}^\infty \mathbf{U}(y_h, r)
    \end{equation*}
    we conclude from \cite[2.1.5]{Federer1969}
    \begin{equation*}
        \mathscr{L}^n(F_t \cap \mathbf{U}(y,r))
        \leq  \lim_{k\to\infty}\mathscr{L}^n\big( \bigcap_{h=k}^\infty F_t \cap \mathbf{U}(y_h, r)\big)
        \leq \limsup_{k \to \infty}\mathscr{L}^n(F_t \cap \mathbf{U}(y_k,r))
        \leq i^{-1}r^n
    \end{equation*}
    and $ y \in W_{t, i, r} $. Fix now $ \sigma \in \mathbf{R} $, an increasing
    sequence $t_j$ converging to $ \sigma $, and a countable dense subset $ D $
    of $ \mathbf{R} $. Noting that
    \begin{equation*}
        \{x : \underline{f}(x) \geq \sigma\}
        = \bigcap_{j=1}^\infty \bigcap_{i=1}^\infty \bigcup_{k=1}^\infty
        \bigcap \bigg\{ W_{t_j, i, r}: r \in D, \, 0 < r < \frac{1}{k} \bigg\} \,,
    \end{equation*}
    we conclude that $ \{x : \underline{f}(x) \geq \sigma \} $ is a Borel subset of $
    \mathbf{R}^n $; hence, $ \underline{f} $ is a Borel function.
\end{proof}

We consider now the approximate lower envelope of $ \arho{K}{\phi} $ (see
Definition~\ref{anisotropic_rho}).
\begin{Definition}
    \label{def:approximate_lower_envelopes}
    For a closed set $ K \subseteq \mathbf{R}^n $ we define the function $
    \arhol{K}{\phi} : \mathbf{R}^n \rightarrow \mathbf{R} $ as
    \begin{displaymath}
        \arhol{K}{\phi}(x) = \ap \liminf_{y \to x }\arho{K}{\phi}(y)
        \quad \textrm{for $ x \in \mathbf{R}^n $.}
    \end{displaymath}
\end{Definition}

\begin{Remark}
    \label{rem:arhol_le_arho}
    Clearly $ 1 \leq \arhol{K}{\phi}(x) \leq \limsup_{y \to x} \arho{K}{\phi}(y)
    \leq \arho{K}{\phi}(x)$ for $ x \in \Real{\adim}$ by Lemma \ref{rem:rho-usc}. 
\end{Remark}

\begin{Remark}\label{rem: arl less than ar}
    Let $(a, \eta)\in N^\phi(K) $. If there exists $ 0 < r < \ar{K}{\phi}(a,
    \eta) $ and $ \sigma > 1 $ with $ \arhol{K}{\phi}(a+r\eta) \geq \sigma $,
    then it follows from Remarks \ref{remark approximate lower limit} and
    \ref{rem:arhol_le_arho} and Lemma \ref{rem:rKphi-usc} that
    \begin{equation*}
        \ar{K}{\phi}(a, \eta) =  r \arho{K}{\phi}(a+r\eta) \geq r \arhol{K}{\phi}(a +r \eta) \geq \sigma r.
    \end{equation*}
    Therefore it follows from the definition in \eqref{eq: arl} that $
    \arl{K}{\phi}(a, \eta) \leq \ar{K}{\phi}(a, \eta) $ for every $(a, \eta)\in
    N^\phi(K) $.
\end{Remark} 

We recall the following notation: if $ K \subseteq \mathbf{R}^n $ is a closed
set and $ \sigma \geq 1 $ we set
\begin{equation*}
    K_\sigma = \{ x : \arho{K}{\phi}(x) \geq \sigma   \} \sim K.
\end{equation*}
If $ \sigma > 1 $ then $ K_\sigma \subseteq \mathbf{R}^n \setminus (K \cup
\Cut^\phi(K)) $ and $ K_1 = \mathbf{R}^n \sim K $. Compare the next Lemma with Lemma \ref{rem:rho-usc}.

\begin{Lemma}\label{lem: density points}
    For every closed set $ K \subseteq \mathbf{R}^n $ the function $
    \arhol{K}{\phi} $ is a Borel function and it satisfies
    \begin{equation*}
        \arhol{K}{\phi}(x) = t \arhol{K}{\phi}\big(\anp{K}{\phi}(x) + t (x-\anp{K}{\phi}(x))\big)
    \end{equation*}
    for $ x \in \mathbf{R}^{n+1} \sim K $ with $ \arhol{K}{\phi}(x) > 1 $ and $
    0 < t < \arhol{K}{\phi}(x) $.
\end{Lemma}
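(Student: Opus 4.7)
The plan is to treat the two assertions separately. Borel measurability of $\arhol{K}{\phi}$ is essentially immediate: by Lemma~\ref{rem:rho-usc} the function $\arho{K}{\phi}$ is upper semicontinuous, hence Borel, so Lemma~\ref{lem: Borel regularity} applied to $f = \arho{K}{\phi}$ shows that $\arhol{K}{\phi}$ is Borel.

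For the scaling identity, fix $x \in \R^\adim \without K$ with $\sigma_0 := \arhol{K}{\phi}(x) > 1$ and $0 < t < \sigma_0$. Since $\arho{K}{\phi}(x) \ge \sigma_0 > 1$ by Remark~\ref{rem:arhol_le_arho}, the point $x$ lies outside $\Cut^{\phi}(K)$, $a := \anp{K}{\phi}(x)$ is a singleton, and $y := a + t(x-a)$ is the image $h_t(x)$ from Lemma~\ref{lem: dilation homeomorphism}. With $K_\sigma := \{z : \arho{K}{\phi}(z) \ge \sigma\} \without K$ as usual, Remark~\ref{remark approximate lower limit} recasts the goal entirely in terms of densities: $\arhol{K}{\phi}(z) \ge \sigma$ if and only if $\density^\adim(\Leb{\adim} \restrict K_s, z) = 1$ for every $1 < s < \sigma$. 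It therefore suffices to prove the following transfer principle, valid for every $1 < \sigma$, $0 < \alpha < \sigma$, and $z \in K_\sigma$:
\begin{equation}
\label{eq:plan-transfer}
\density^\adim(\Leb{\adim} \restrict K_\sigma, z) = 1
\quad \Longleftrightarrow \quad
\density^\adim(\Leb{\adim} \restrict K_{\sigma/\alpha}, h_\alpha(z)) = 1 \,.
\end{equation}

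The proof of~\eqref{eq:plan-transfer} would combine Lemma~\ref{lem: dilation homeomorphism}, which exhibits $h_\alpha|K_\sigma$ as a~homeomorphism onto $K_{\sigma/\alpha}$ with inverse $h_{1/\alpha}|K_{\sigma/\alpha}$, with Theorem~\ref{intro_Lipschitz} (equivalently Corollary~\ref{cor:xi-lip-cont}), which makes $\anp{K}{\phi}$ locally Lipschitz on both $K_\sigma$ and $K_{\sigma/\alpha}$ once a~bounded annulus of values of $\da{K}{\phi}$ is fixed. Since $h_\alpha(z) = \alpha z + (1-\alpha)\anp{K}{\phi}(z)$ on $K_\sigma$, this yields a~locally bilipschitz correspondence between $K_\sigma$ and $K_{\sigma/\alpha}$; the density-point preservation theorem of Buczolich~\cite{Buczo1992} then transfers the density-one property across this correspondence, proving~\eqref{eq:plan-transfer}.

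With~\eqref{eq:plan-transfer} in hand both halves of the identity $\arhol{K}{\phi}(y) = \sigma_0/t$ follow formally. For $\arhol{K}{\phi}(y) \ge \sigma_0/t$, given any $s' \in (1, \sigma_0/t)$ I~pick $s'' \in (\max\{s', 1/t\}, \sigma_0/t)$ and apply~\eqref{eq:plan-transfer} with $\sigma = s''t$ and $\alpha = t$; since $1 < s''t < \sigma_0$, the defining property of $\arhol{K}{\phi}(x) = \sigma_0$ gives density one of $K_{s''t}$ at~$x$, hence density one of $K_{s''}$ at~$y$, and then density one of $K_{s'}$ at~$y$ by the inclusion $K_{s''} \subseteq K_{s'}$. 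For~the reverse inequality, if $\arhol{K}{\phi}(y) > \sigma_0/t$, then picking $s'' \in (\sigma_0/t, \arhol{K}{\phi}(y))$ and applying~\eqref{eq:plan-transfer} in the opposite direction with $\sigma = s''$ and $\alpha = t$ would produce density one of $K_{s''t}$ at~$x$ with $s''t > \sigma_0$, contradicting $\arhol{K}{\phi}(x) = \sigma_0$. The~main obstacle throughout is the transfer step~\eqref{eq:plan-transfer}: because $h_t$ is bilipschitz only on the level set $K_\sigma$ and not on any $\R^\adim$-neighbourhood of~$x$, preservation of density-one points cannot be read off from a~standard bilipschitz change of variables and genuinely relies on Buczolich's result.
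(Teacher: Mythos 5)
Your proposal is correct and follows essentially the same route as the paper: a locally bilipschitz correspondence between $K_\sigma$ and $K_{\sigma/t}$ obtained from Lemma~\ref{lem: dilation homeomorphism} together with Corollary~\ref{cor:xi-lip-cont} on a small ball where $\da{K}{\phi}$ stays in a fixed annulus, Buczolich's theorem to transfer density-one points, and then the reverse inequality by applying the same estimate to $h_t(x)$ and $1/t$. Only a cosmetic remark: in your reverse-inequality step the parameters should read $\sigma = s''t$, $\alpha = t$ (reading the equivalence right to left), or equivalently $z = y$, $\alpha = 1/t$, rather than $\sigma = s''$, $\alpha = t$; the required membership $x \in K_{s''t}$ there follows from the identity $\arho{K}{\phi}(x) = t\,\arho{K}{\phi}(y)$ of Lemma~\ref{rem:rho-usc}.
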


\begin{proof}
    The function $ \arhol{K}{\phi} $ is a Borel function by Lemma \ref{lem:
      Borel regularity}.
    
    Let $ h_t $  be defined as in Lemma \ref{lem: dilation homeomorphism} for all $ t \in \mathbf{R} $. Suppose $ x \in \mathbf{R}^n \sim K $, $\sigma = \arhol{K}{\phi}(x)  > 1 $ and $ 0 < t < \sigma $. We choose $ 0 < \epsilon < \frac{\da{K}{\phi}(x)}{2} $ and we notice that
    \begin{equation*}
        \mathbf{B}^\phi(x, \epsilon) \subseteq \bigg\{  y   : \frac{1}{2}\da{K}{\phi}(x) \leq \da{K}{\phi}(y) \leq \frac{3}{2}\da{K}{\phi}(x)\bigg\}
    \end{equation*} 
    and, with the help of Lemma \ref{lem: dilation homeomorphism},
    \begin{equation*}
        h_t[K_\sigma \cap \mathbf{B}^\phi(x, \epsilon)]
        \subseteq K_{\sigma/t} \cap
        \bigg\{
        y  : \frac{t}{2}\da{K}{\phi}(x) \leq \da{K}{\phi}(y) \leq \frac{3t}{2}\da{K}{\phi}(x)
        \bigg\} \,.
    \end{equation*}
    Then we infer from Corollary \ref{cor:xi-lip-cont} and Lemma \ref{lem:
      dilation homeomorphism} that $ h_{t}| K_{\sigma} \cap \mathbf{B}^\phi(x,
    \epsilon) $ is a bi-Lipschitz homeomorphism. Since
    $\density^{\adim}(\Leb{\adim} \restrict K_{\sigma},x) =1 $, we employ
    \cite[Theorem 1]{Buczo1992} to conclude that
    \begin{equation}\label{lem: density points eq}
        \density^{\adim}(\Leb{\adim} \restrict K_{\sigma/t},h_t(x)) =1 \quad \textrm{and}\quad t\arhol{K}{\phi}(h_t(x)) \geq \arhol{K}{\phi}(x).
    \end{equation}
    Noting that $ \arhol{K}{\phi}(h_t(x)) \geq \arhol{K}{\phi}(x)/t >
    \sup\{1, 1/t\} $, we can apply the inequality in \eqref{lem: density
      points eq}, with~$ x $ and~$ t $ replaced by $ h_t(x) $ and $ \frac{1}{t}
    $ respectively, to obtain the desired conclusion.
\end{proof}

\begin{Lemma}
    \label{lem:existence_of_real_eigenvalues}
    Suppose
    \begin{gather}
        K \subseteq \Real{\adim} \;\text{is closed} \,, \quad
        x \in \Real{\adim} \without K \,, \quad \arhol{K}{\phi}(x) = \sigma \,, \quad   \arho{K}{\phi}(x) = \lambda, \\
        \text{$ \da{K}{\phi} $ is pointwise differentiable of order $ 2 $ at $x$} \,,
        \\
        T = \R^{\adim} \cap \{ v : v \bullet \grad \da{K}{\phi}(x) = 0 \} \,, 
        \\
        \quad
        h_t(y) = ty + (1-t) \anp{K}{\phi}(y) \quad
        \text{for $y \in \Real{\adim}$ and $t \in \R$} \,.
    \end{gather}
    Then the following statements hold.
    \begin{enumerate}
        
    \item
        \label{existence_of_real_eigenvalues:1} $ \im \Der \an{K}{\phi}(x)
        \subseteq T $.

    \item
        \label{existence_of_real_eigenvalues:2} $ \Der
        \an{K}{\phi}(x)(\an{K}{\phi}(x)) = \Der
        \anp{K}{\phi}(x)(\an{K}{\phi}(x)) =0 $.

    \item
        \label{existence_of_real_eigenvalues:3} There exists a basis $ v_1, \ldots , v_{n-1} $ of $ T $ of eigenvectors of $ \Der \bm{\nu}^\phi_K(x)| T \in \Hom(T,T) $ and the eigenvalues $ \chi_{1} \leq
        \ldots \leq \chi_{\adim-1} $ of $\Der\an{K}{\phi}(x)|T $ are real numbers
        such that
        \begin{displaymath}
            \frac{1}{(1-\lambda)\da{K}{\phi}(x)}\leq \chi_{i} \leq
            \frac{1}{\da{K}{\phi}(x)} \,.
        \end{displaymath}

    \item
        \label{existence_of_real_eigenvalues:4.01}
        $\Der h_t(x)$ is an isomorphism of $\Real{\adim}$ for every $ 0 < t < \lambda $.
        
    \item
        \label{existence_of_real_eigenvalues:4.1}
        If $ \sigma > 1 $ the $ \mathbf{R}^n $-multivalued map $ h_{1/t} $ is
        strongly differentiable at $ h_t(x) $ for every $ 0 < t < \sigma $.
        
    \item
        \label{existence_of_real_eigenvalues:4}
        If $\sigma > 1$, then $ \da{K}{\phi} $ is pointwise differentiable of
        order $2$ at $h_t(x)$ whenever $0 < t < \sigma$.
    \end{enumerate}
\end{Lemma}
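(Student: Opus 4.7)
The plan is to couple Lemma~\ref{basic_properties_of_da_and_anp}\ref{basic_properties_of_da_and_anp_6}, which identifies pointwise twice differentiability of $\da{K}{\phi}$ at~$x$ with multivalued differentiability of~$\anp{K}{\phi}$ (and hence of~$\an{K}{\phi}$) at~$x$, with Lemma~\ref{Gariepy-Pepe}, which turns this into pointwise twice differentiability of a graph function representing the level set $S^{\phi}(K,\da{K}{\phi}(x))$ near~$x$. Items~\ref{existence_of_real_eigenvalues:1} and~\ref{existence_of_real_eigenvalues:2} are immediate from this setup. For~\ref{existence_of_real_eigenvalues:1}, the image of $\an{K}{\phi}$ lies in $S = \Bdry \cballF{\phi}{0}{1}$, so the image of its derivative sits inside $\Tan(S,\an{K}{\phi}(x)) = \ker \grad \phi(\an{K}{\phi}(x)) = T$; the last equality uses $\grad \da{K}{\phi}(x) = \grad\phi(\an{K}{\phi}(x))$, which follows from Lemma~\ref{basic_properties_of_da_and_anp}\ref{basic_properties_of_da_and_anp_3} together with 0-homogeneity of~$\grad \phi$. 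For~\ref{existence_of_real_eigenvalues:2}, Lemma~\ref{basic_properties_of_da_and_anp}\ref{basic_properties_of_da_and_anp_4} combined with Remark~\ref{single-valuedness_of_anp} gives that both $\anp{K}{\phi}$ and $\an{K}{\phi}$ are locally constant along the ray $\anp{K}{\phi}(x) + s\an{K}{\phi}(x)$ for $s$ near~$\da{K}{\phi}(x)$, so their derivatives vanish in the direction~$\an{K}{\phi}(x)$.

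Item~\ref{existence_of_real_eigenvalues:3} is the main step. From Lemma~\ref{basic_properties_of_da_and_anp}\ref{basic_properties_of_da_and_anp_6} we have
\begin{equation*}
    \pt\Der^{2}\da{K}{\phi}(x)(u,v) = \uD^{2}\phi(\an{K}{\phi}(x))\bigl(\Der \an{K}{\phi}(x)u,\,v\bigr) \,,
\end{equation*}
and symmetry of the left hand side combined with positive definiteness of $\uD^{2}\phi(\an{K}{\phi}(x))|T \times T$ (a consequence of uniform convexity of~$\phi$) exhibits $\Der \an{K}{\phi}(x)|T$ as self-adjoint with respect to a scalar product on~$T$, giving real eigenvalues and a basis of eigenvectors. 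The eigenvalue bounds will come from a two-sided comparison of $S^{\phi}(K,r)$, where $r = \da{K}{\phi}(x)$, with the two $\phi$-spheres $\Bdry \cballF{\phi}{\anp{K}{\phi}(x)}{r}$ (which bounds a~ball contained in $\{y : \da{K}{\phi}(y) \le r\}$ by the triangle inequality) and $\Bdry \cballF{\phi}{\anp{K}{\phi}(x) + \lambda(x-\anp{K}{\phi}(x))}{(\lambda-1)r}$ (which bounds a~ball contained in $\{y : \da{K}{\phi}(y) \ge r\}$ because $\arho{K}{\phi}(x) = \lambda$). Writing all three surfaces as graphs over~$T$ in direction $\nu = \grad \da{K}{\phi}(x)/|\grad \da{K}{\phi}(x)|$ via Lemma~\ref{Gariepy-Pepe} and Remark~\ref{rem:graph}, one obtains graph functions $g_{1} \le f \le g_{2}$ with identical first-order data at~$\project{T}x$, so $\Der^{2}g_{1}(\project{T}x) \le \pt\Der^{2}f(\project{T}x) \le \Der^{2}g_{2}(\project{T}x)$ as symmetric forms on~$T$; translating through $|\grad \da{K}{\phi}(x)|\,\pt\Der^{2}f(\project{T}x) = -\pt\Der^{2}\da{K}{\phi}(x)$ and the displayed formula above yields the stated bounds. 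The main technical obstacle here is the bookkeeping required to reconcile the anisotropic graph representations with the fact that $\nu$ and $\an{K}{\phi}(x)$ are generally not parallel.

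Item~\ref{existence_of_real_eigenvalues:4.01} is then computational: differentiating $\anp{K}{\phi}(y) = y - \da{K}{\phi}(y)\an{K}{\phi}(y)$ at~$x$ shows that $\Der h_{t}(x) = t\,\id{\Real{\adim}} + (1-t)\Der \anp{K}{\phi}(x)$ preserves the decomposition $T \oplus \lin\{\an{K}{\phi}(x)\}$ by~\ref{existence_of_real_eigenvalues:1} and~\ref{existence_of_real_eigenvalues:2}, acting as multiplication by~$t$ on $\lin\{\an{K}{\phi}(x)\}$ and as $\id{T} - (1-t)\da{K}{\phi}(x)\Der \an{K}{\phi}(x)|T$ on~$T$; a short case split using the bounds from~\ref{existence_of_real_eigenvalues:3} gives $1 - (1-t)\da{K}{\phi}(x)\chi_{i} > 0$ for every $0 < t < \lambda$, so $\Der h_{t}(x)$ is an isomorphism.

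For~\ref{existence_of_real_eigenvalues:4.1} and~\ref{existence_of_real_eigenvalues:4} it is enough to show that $\anp{K}{\phi}$ is strongly differentiable at $h_{t}(x)$: then $h_{1/t}$ inherits strong differentiability through its explicit formula, and Lemma~\ref{basic_properties_of_da_and_anp}\ref{basic_properties_of_da_and_anp_6} promotes this to pointwise twice differentiability of~$\da{K}{\phi}$ at~$h_{t}(x)$. Choose $1 < \sigma' < \sigma/t$ and, for appropriate $s < t'$, set $A = \{y : \arho{K}{\phi}(y) \ge \sigma' \,,\; s \le \da{K}{\phi}(y) \le t'\}$ so that $h_{t}(x) \in A$. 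Corollary~\ref{cor:xi-lip-cont} gives the Lipschitz-type estimate~\eqref{eq:adl:lip} for $\anp{K}{\phi}|A$, while Lemma~\ref{lem: density points} together with $\arhol{K}{\phi}(h_{t}(x)) = \sigma/t > \sigma'$ exhibits $h_{t}(x)$ as a density point of~$A$. To differentiate $\anp{K}{\phi}|A$ at~$h_{t}(x)$, for $z \in A$ close to $h_{t}(x)$ Lemma~\ref{lem: dilation homeomorphism} yields $y := h_{1/t}(z) \in K_{\sigma'}$ close to~$x$ with $\anp{K}{\phi}(z) = \anp{K}{\phi}(y)$; combining the identity $y - x = (1/t)(z - h_{t}(x)) + (1 - 1/t)(\anp{K}{\phi}(z) - \anp{K}{\phi}(h_{t}(x)))$ with differentiability of $\anp{K}{\phi}$ at~$x$ and invertibility of $\id{\Real{\adim}} - (1 - 1/t)\Der\anp{K}{\phi}(x)$ (verifiable from the eigenvalue analysis of~\ref{existence_of_real_eigenvalues:4.01}) produces a linear approximation. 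Lemma~\ref{lem:lip+ap-diff=diff} then upgrades this to strong differentiability of~$\anp{K}{\phi}$ at~$h_{t}(x)$, completing the proof.
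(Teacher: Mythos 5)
Your proposal is correct and follows essentially the same route as the paper's proof: the two-ball sandwich of $S^{\phi}(K,r)$ between $\cballF{\phi}{\anp{K}{\phi}(x)}{r}$ and the ball of radius $(\lambda-1)r$ centred at $\anp{K}{\phi}(x)+\lambda(x-\anp{K}{\phi}(x))$ for part (c), the block decomposition $T\oplus\lin\{\an{K}{\phi}(x)\}$ for (d), and the combination of Lemma~\ref{lem: density points}, Corollary~\ref{cor:xi-lip-cont} and Lemma~\ref{lem:lip+ap-diff=diff} for (e)--(f), with your hand-built inverse-function argument standing in for the paper's citation of Zaj\'{\i}\v{c}ek's Lemma~2. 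The one step you leave as ``bookkeeping'' in (c) --- showing that the two comparison $\phi$-spheres contribute second-order graph data equal to the exact multiples $r^{-1}$ and $((1-\lambda)r)^{-1}$ of the scalar product $\uD^2\phi(\an{K}{\phi}(x))|T\times T$, which is what converts the quadratic-form sandwich into eigenvalue bounds --- is precisely the computation the paper carries out via Remark~\ref{rem:graph}, Lemma~\ref{lem:existence_of_real_eigenvalues:aux} and \cite[2.33]{DRKS2020ARMA}, so your plan goes through.
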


\begin{proof}
    Note that $\lambda > 1$ and $1 \le \sigma \le \lambda$ by
    Lemma~\ref{lem:Cut-subs-Sigma} and Remark~\ref{rem:arhol_le_arho} and also
    that $\an{K}{\phi}$ is differentiable at~$x$
    by~\ref{basic_properties_of_da_and_anp}\ref{basic_properties_of_da_and_anp_6}.
    We~choose a function $ \xi : \Real{\adim} \to \Real{\adim} $ such that $
    \xi(y) \in \anp{K}{\phi}(y) $ for $ y \in \Real{\adim} $ and we define
    \begin{gather}
        \nu(y) = \frac{y-\xi(y)}{\da{K}{\phi}(y)} \quad \text{and} \quad \eta(y)
        = \frac{\grad \phi(y-\xi(y))}{|\grad \phi(y-\xi(y))|} \quad \text{for $
          y \in \Real{\adim} \without K $} \,.
    \end{gather} Employing
    \ref{basic_properties_of_da_and_anp}\ref{basic_properties_of_da_and_anp_6}\ref{basic_properties_of_da_and_anp_3}
    we notice that $ \eta $ is differentiable at $ x $,
    \begin{equation}
        \label{eq:etax=grad-da} \eta(x) = \frac{\grad
          \da{K}{\phi}(x)}{|\grad\da{K}{\phi}(x)|} \,, \quad \text{and} \quad \im
        \Der \eta(x) \subseteq T
    \end{equation} since $ |\eta(y)| = 1 $ for every $ y \in \Real{\adim}
    \without K$.  Moreover, we~compute
    \begin{equation}
        \label{eq:existence_of_real_eigenvalues:7}
        \Der \eta(x)u \bullet v
        = \frac{\Der(\grad \phi \circ \nu)(x)u \bullet v}{|\grad \phi(\nu(x))|}
        = \frac{\pt\Der^2\da{K}{\phi}(x)(u,v)}{|\grad \da{K}{\phi}(x)|}
        \quad \text{for $ u,v \in T$} \,;
    \end{equation}
    whence we conclude that $ \Der \eta(x)|T \in \Hom(T,T) $ is
    self-adjoint. Recalling~\ref{rem:phi-phiast} we notice that
    \begin{displaymath}
        \grad \phi^{\ast}(\eta(y)) = \grad \phi^{\ast}(\grad \phi(y-\xi(y))) =
        \grad \phi^{\ast}\Big(\grad
        \phi\Big(\frac{y-\xi(y)}{\da{K}{\phi}(y)}\Big)\Big) = \nu(y)
    \end{displaymath} for $ y \in \Real{\adim} \without K $. Henceforth,
    \begin{equation}
        \label{eq:existence_of_real_eigenvalues:5} \Der \nu(x) = \Der \grad
        \phi^{\ast}(\eta(x)) \circ \Der \eta(x) \,.
    \end{equation} Since $ \Der \grad \phi^{\ast}(v)v = 0$ and $ \Der \grad
    \phi^{\ast}(v) $ is self-adjoint for $ v \in \Real{\adim} \without \{0\} $,
    we conclude that
    \begin{displaymath}
        \Der \grad \phi^{\ast}(v)u \bullet v = u \bullet \Der \grad
        \phi^{\ast}(v)v = 0 \quad \text{for $ u, v \in \Real{\adim} $, $ v \neq
          0 $} \,,
    \end{displaymath} whence we deduce that $ \im \Der \grad \phi^{\ast}(v)
    \subseteq \{ u : u \bullet v =0 \} $ for $ v \neq 0 $, $ \im \Der \nu(x)
    \subseteq T $, and $ \Der \grad \phi^{\ast}(\eta(x))|T \in \Hom(T,T) $ is
    a~positive definite self-adjoint linear map. In~particular,
    we~established~\ref{existence_of_real_eigenvalues:1} and, moreover,
    it~follows from~\eqref{eq:existence_of_real_eigenvalues:5}
    and~\cite[2.25]{DRKS2020ARMA} that the eigenvalues of $ \Der \nu(x)|T $ are
    real numbers.

    To prove \ref{existence_of_real_eigenvalues:2} we notice by
    \ref{single-valuedness_of_anp} that the equations
    \begin{displaymath}
        \xi(\xi(x) + t(x-\xi(x) ) = \xi(x) \quad \text{and} \quad \nu(\xi(x) + t
        (x-\xi(x)) = \nu(x)
    \end{displaymath} hold for $ 0 < t \leq 1 $ and we differentiate with
    respect to $ t $ at $ t = 1 $.

    We now check the estimate claimed in~\ref{existence_of_real_eigenvalues:3}
    for the eigenvalues of $ \Der \an{K}{\phi}(x)|T $. Assume $x=0$ and $\lambda
    = \arho{K}{\phi}(0) > 1$. Define
    \begin{gather}
        r = \da{K}{\phi}(0) \,, \quad W_1 = \oballF{\phi}{\xi(0)}{r} \,, \quad
        W_2 = \oballF{\phi}{ \xi(0) - \lambda \xi(0) }{(\lambda-1) r} \,,
    \end{gather} and observe that $ W_1 \subseteq \{ y : \da{K}{\phi}(y) < r \}
    $, $ W_2 \subseteq \{y : \da{K}{\phi}(y) > r \} $ and $0 \in \Bdry W_1 \cap
    \Bdry W_2 \cap S^{\phi}(K,r) $. Notice that $ \Tan(S^{\phi}(K,r), 0) = \Tan(\Bdry W_1, 0)
    = \Tan(\Bdry W_2, 0) $. Using \ref{Gariepy-Pepe}, we find an open set $ V $
    containing $ 0 $ and three functions $ f : T \to T^\perp $, $ f_1 : T \to
    T^\perp $ and $ f_2 : T \to T^\perp $ such that $ f_1 $ and $ f_2 $ are of
    class~$\cnt{2}$, $ f $ is pointwise differentiable of order $ 2 $ at $ 0 $,
    $ f(0) = f_1(0)= f_2(0)=0 $, $\Der f(0) = \Der f_1(0) = \Der f_2(0) =0 $,
    \begin{gather}
        V \cap S^{\phi}(K,r) = V \cap \{ \chi + f(\chi) : \chi \in T \} \,, \\ V \cap
        \Bdry W_i = V \cap \{ \chi + f_i(\chi) : \chi \in T \} \quad \text{for
          $i \in \{1,2\}$}\,,
    \end{gather} and $ f_1(\chi) \bullet \eta(0) \leq f(\chi)\bullet \eta(0)
    \leq f_2(\chi) \bullet \eta(0)$ for $ \chi \in \project{T}\lIm V \rIm $. In
    particular,
    \begin{displaymath}
        \Der^2 f_1(0) \bullet \eta(0)(u,u) \leq \pt \Der^2f(0) \bullet
        \eta(0)(u,u) \leq \Der^2 f_2(0) \bullet \eta(0)(u,u) \quad \text{for $u
          \in T$}\,.
    \end{displaymath} Let $ \eta_i : \Bdry W_i \to \sphere{\adim-1} $ be the
    exterior unit normal function of $ W_i $ for $i \in \{1,2\}$. Then $
    \eta_1(0) = \eta(0) = - \eta_2(0) $ and we use \ref{Gariepy-Pepe} in
    combination with \eqref{eq:etax=grad-da},
    \eqref{eq:existence_of_real_eigenvalues:7},
    and~\ref{lem:existence_of_real_eigenvalues:aux} to infer
    \begin{multline}
        \label{eq:existence_of_real_eigenvalues:6}
        -\Der \eta_2(0)u \bullet u
        = \Der^2f_2(0)(u,u) \bullet \eta_2(0) \leq \Der \eta(0)u \bullet u \\
        \leq - \Der f_1(0)u \bullet \eta_1(0) = \Der \eta_1(0)u \bullet u \quad
        \text{for $ u \in T $} \,.
    \end{multline} To conclude we use the argument from the third paragraph of
    the proof of \cite[2.34]{DRKS2020ARMA}. First, we find a positive definite
    self-adjoint map $ C \in \Hom(T,T) $ such that $ \Der \grad \phi(\eta(0))|T
    = C \circ C $; then we observe, using \cite[2.33]{DRKS2020ARMA}, that
    \begin{gather}
        C \circ \Der \eta_2(0) \circ C = C^{-1} \circ \Der \grad \phi(\eta(0))
        \circ \Der \eta_2(0) \circ C = (\arho{K}{\phi}(x)-1)^{-1}r^{-1} \id{T}
        \,, \\ C \circ \Der \eta_1(0) \circ C = C^{-1} \circ \Der \grad
        \phi(\eta(0)) \circ \Der \eta_1(0) \circ C = r^{-1} \id{T} \,,
    \end{gather}
    whence we deduce, employing~\eqref{eq:existence_of_real_eigenvalues:6}
    \begin{multline}
        r^{-1}|u|^2 = (C \circ \Der \eta_1(0) \circ C)u \bullet u = (\Der
        \eta_1(0) \circ C)u \bullet C(u) \\ \geq (\Der \eta(0) \circ C)u \bullet
        C(u) \geq -(\Der \eta_2(0) \circ C)u \bullet C(u) \\ = -(C \circ \Der
        \eta_2(0) \circ C)(u) \bullet u = (1-\arho{K}{\phi}(0))^{-1}r^{-1}|u|^2
        \quad \text{for $ u \in T $} \,.
    \end{multline}
    Noting that $ C \circ \Der \eta(0) \circ C $ is a self adjoint map with the
    same eigenvalues as
    $ \Der \nu(0) = \Der \grad \phi(\eta(0)) \circ \Der \eta(0) $ we finally
    obtain the estimate in \ref{existence_of_real_eigenvalues:3}.

    We turn to the proof of~\ref{existence_of_real_eigenvalues:4.01}. Let $ 0 <
    t < \lambda $ and assume $ t \neq 1 $ (notice that $ h_1 =
    \mathbf{I}_{\mathbf{R}^n} $). By contradiction, if there was $v \neq 0$ with
    $\Der h_t(x)v = 0$, then it would follow that
    \begin{displaymath}
        \frac{t}{t-1}v = \Der \anp{K}{\phi}(x)v
    \end{displaymath}
    and, noting that $\an{K}{\phi}(x) \bullet \grad \da{K}{\phi}(x) =
    \phi(\an{K}{\phi}(x)) = 1$ and $ v = w + \kappa \an{K}{\phi}(x) $ for some $
    w \in T $ and $ \kappa \in \Real{} $, we could employ
    \ref{existence_of_real_eigenvalues:2} to compute
    \begin{displaymath}
        \frac{t}{t-1}(\kappa \an{K}{\phi}(x) + w) = \Der \anp{K}{\phi}(x)w = w -
        \da{K}{\phi}(x) \Der \an{K}{\phi}(x)w \,,
    \end{displaymath}
    whence we would deduce from \ref{existence_of_real_eigenvalues:1} that $
    \kappa =0 $, $ w \neq 0 $ and
    \begin{displaymath}
        \Der \an{K}{\phi}(x)w = (\da{K}{\phi}(x)(1-t))^{-1}w \,,
    \end{displaymath}
    which would contradict one of the estimates
    in~\ref{existence_of_real_eigenvalues:3}: in~case $0 < t < 1$, we have
    $(1-t)^{-1} > 1$ and if $1 < t < \lambda$, then $(1-t)^{-1} <
    (1-\lambda)^{-1}$.
    
    Finally, we prove \ref{existence_of_real_eigenvalues:4.1} and
    \ref{existence_of_real_eigenvalues:4}. Let $ \sigma > 1 $ and $ 0 < t <
    \sigma $. We define the $ \mathbf{R}^n $-multivalued maps (see Definition
    \ref{def: restrictions}) $ T $ and $ S $ by
    \begin{equation*}
        T = h_{1/t}|K_{\sigma/t} \qquad \textrm{and} \qquad S = h_t|K_\sigma.
    \end{equation*}
    Then $ T $ is continuous at $ h_t(x) $ and $ S = T^{-1} $ by Lemma \ref{lem:
      density points} (see Definition \ref{def: inverse}). Moreover, since $
    \density^{\adim}(\Leb{\adim} \restrict K_{\sigma},x) = 1 $, it follows from
    Remark \ref{rem:multi-diff} that $ S $ is strongly differentiable at $ x $
    with $ \Der S(x) = \Der h_t(x) $ and it follows from Lemma \ref{lem: density
      points} that
    \begin{equation*}
        \density^{\adim}(\Leb{\adim} \restrict K_{\sigma/t},h_t(x)) =1.
    \end{equation*}
    We apply \cite[Lemma 2]{Zajicek1983diff} to conclude that $ T $ is
    differentiable at $ h_t(x) $ and, noting the Lipschitz property of $ h_{1/t}
    $ over $ K_{\sigma/t} $ that can be deduced from Corollary
    \ref{cor:xi-lip-cont}, we can use Lemma \ref{lem:lip+ap-diff=diff} to
    conclude that $ h_{1/t} $ is strongly differentiable at $ h_t(x)
    $. Therefore $ \anp{K}{\phi} $ is strongly differentiable at $ h_t(x) $ and
    $ \da{K}{\phi} $ is pointwise differentiable of order $ 2 $ at $ h_t(x) $ by
    Lemma~\ref{basic_properties_of_da_and_anp}\ref{basic_properties_of_da_and_anp_6}.
\end{proof}

\begin{Remark}
    The proof of \ref{existence_of_real_eigenvalues:3} shows the following
    fact. \emph{Suppose $ \xi : \mathbf{R}^n \rightarrow \mathbf{R}^n $ is an
      arbitrary function such that $ \xi(y)\in \bm{\xi}^\phi_K(y) $ for $ y \in
      \mathbf{R}^{n} $ and
      \begin{equation*}
          \eta(y)
          = \frac{\grad \phi(y-\xi(y))}{|\grad \phi(y-\xi(y))|} \quad \textrm{for $
            y \in \Real{\adim} \without K $.}
      \end{equation*}
      Then $\Der(\nabla \phi)(\eta(x))|T $ and $  \Der \eta(x)|T $  are self-adjoint maps in $ \Hom(T,T) $, 
      \begin{equation*}
          \Der \bm{\nu}^\phi_K(x)
          = \Der(\nabla \phi)(\eta(x)) \circ \Der \eta(x)
          \quad \textrm{and} \quad
          \Der \eta(x)(u) \bullet v
          = \frac{\pt\Der^2\da{K}{\phi}(x)(u,v)}{|\grad \da{K}{\phi}(x)|}
          \quad \text{for $ u,v \in T$}
      \end{equation*}}
\end{Remark}

The next Lemma clarifies the relation between the function $ \arhol{K}{\phi} $
and the function $ \arl{K}{\phi} $.
\begin{Lemma}\label{lem:approximate_lower_envelopes}
    Suppose $ K \subseteq \mathbf{R}^n $ is closed and $ (a, \eta)\in N^\phi(K) $. Then the following statements are equivalent.
    \begin{enumerate}
    \item $ \arhol{K}{\phi}(a + r \eta) > 1 $ for some $ r > 0 $, 
    \item $ \arl{K}{\phi}(a, \eta) > 0 $, 
    \item $ \arhol{K}{\phi}(a+r\eta) > 1 $ for every $ 0 < r < \arl{K}{\phi}(a, \eta) $.
    \end{enumerate}
    If (a), (b) or (c) holds, then 
    \begin{equation}\label{lem:approximate_lower_envelopes eq}
        \arl{K}{\phi}(a, \eta) =  r\arhol{K}{\phi}(a+r\eta) \qquad \textrm{for every $ 0 < r < \arl{K}{\phi}(a, \eta) $.}
    \end{equation}
\end{Lemma}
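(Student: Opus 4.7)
The plan is to combine the characterization of the approximate lower limit (Remark \ref{remark approximate lower limit}) with the scaling identity for $\arhol{K}{\phi}$ along rays established in Lemma \ref{lem: density points}, and to exploit the fact that the density condition in the definition of $\arl{K}{\phi}$ can be directly translated into a lower bound on $\arhol{K}{\phi}$.

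For the implication (a) $\Rightarrow$ (b), suppose $\arhol{K}{\phi}(a + r\eta) > 1$ for some $r > 0$. Then $\arho{K}{\phi}(a + r\eta) \ge \arhol{K}{\phi}(a + r\eta) > 1$ by Remark \ref{rem:arhol_le_arho}, which in view of Lemma \ref{rem:rho-usc} forces $r < \ar{K}{\phi}(a,\eta)$. For any $1 < \sigma < \arhol{K}{\phi}(a + r\eta)$, Remark \ref{remark approximate lower limit} gives $\density^{\adim}(\Leb{\adim} \restrict K_\sigma, a + r\eta) = 1$, so the pair $(\sigma, r)$ is admissible in the supremum defining $\arl{K}{\phi}(a,\eta)$; hence $\arl{K}{\phi}(a,\eta) \ge \sigma r > 0$, and letting $\sigma \uparrow \arhol{K}{\phi}(a + r\eta)$ already produces one half of \eqref{lem:approximate_lower_envelopes eq}, namely $\arl{K}{\phi}(a,\eta) \ge r\arhol{K}{\phi}(a + r\eta)$.

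The main step is (b) $\Rightarrow$ (c) together with the reverse inequality. Suppose $r_1 := \arl{K}{\phi}(a,\eta) > 0$ and fix $0 < r < r_1$. By definition of the supremum one finds $\sigma > 1$ and $0 < r' < \ar{K}{\phi}(a,\eta)$ with $\density^{\adim}(\Leb{\adim} \restrict K_\sigma, a + r'\eta) = 1$ and $\sigma r' > r$; consequently $\arhol{K}{\phi}(a + r'\eta) \ge \sigma$. Setting $t := r/r'$ one has $t < \sigma \le \arhol{K}{\phi}(a + r'\eta)$, and since $a$ is the unique $\phi$-nearest point of $a + r'\eta$ (Remark \ref{single-valuedness_of_anp}) the scaling identity of Lemma \ref{lem: density points} applied at $a + r'\eta$ yields
\begin{equation*}
    \arhol{K}{\phi}(a + r\eta) = t^{-1}\arhol{K}{\phi}(a + r'\eta) \ge \sigma r'/r > 1,
\end{equation*}
which proves (c). Multiplying by $r$ and taking the supremum over admissible pairs $(\sigma, r')$ gives $r\arhol{K}{\phi}(a + r\eta) \ge r_1$, which together with the previous paragraph finishes \eqref{lem:approximate_lower_envelopes eq}. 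Finally, (c) $\Rightarrow$ (a) is immediate from (b) by picking any $r \in (0, \arl{K}{\phi}(a,\eta))$.

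The main obstacle is ensuring that the domain condition $0 < t < \arhol{K}{\phi}(x)$ required by Lemma \ref{lem: density points} is satisfied when transferring the density lower bound from the auxiliary point $a + r'\eta$ to $a + r\eta$; this is handled by the built-in inequality $r/r' < \sigma \le \arhol{K}{\phi}(a + r'\eta)$, which is precisely what admissibility in the supremum defining $\arl{K}{\phi}(a,\eta)$ provides.
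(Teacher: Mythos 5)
Your treatment of (b)$\Rightarrow$(c), of the equality \eqref{lem:approximate_lower_envelopes eq}, and of (c)$\Rightarrow$(a) is essentially the paper's own proof: choose an admissible pair $(\sigma,r')$ with $\sigma r'$ close to $\arl{K}{\phi}(a,\eta)$, note that the density hypothesis gives $\arhol{K}{\phi}(a+r'\eta)\ge\sigma$, and transport this along the ray via the scaling identity of Lemma \ref{lem: density points} with $t=r/r'<\sigma$; the two resulting one-sided inequalities combine to the stated equality. That part is correct, including the verification that the domain condition $0<t<\arhol{K}{\phi}(a+r'\eta)$ of Lemma \ref{lem: density points} is met.

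There is, however, one false step in your (a)$\Rightarrow$(b) paragraph: the claim that $\arho{K}{\phi}(a+r\eta)>1$ together with Lemma \ref{rem:rho-usc} ``forces $r<\ar{K}{\phi}(a,\eta)$''. That lemma only tells you $a+r\eta\notin\Cut^{\phi}(K)$, which excludes $r=\ar{K}{\phi}(a,\eta)$ but not $r>\ar{K}{\phi}(a,\eta)$: for $K=\{0,2\}\subseteq\R$ and $(a,\eta)=(0,1)$ one has $\ar{K}{}(0,1)=1$, yet $\arho{K}{}(3/2)=2>1$ and indeed $\arhol{K}{}(3/2)=2>1$, because the point $3/2$ is served by the other point of $K$. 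What admissibility of the pair $(\sigma,r)$ in the supremum \eqref{eq: arl} actually requires is the hypothesis $0<r<\ar{K}{\phi}(a,\eta)$, and this restriction has to be read into statement (a) (as the paper implicitly does, and as Remark \ref{rem: arl less than ar} and every application of the lemma do); for $r\ge\ar{K}{\phi}(a,\eta)$ the point $a+r\eta$ carries information about a different normal direction and your route to (b) breaks down. Fortunately, when you re-invoke the inequality $\arl{K}{\phi}(a,\eta)\ge r\arhol{K}{\phi}(a+r\eta)$ at the end to close \eqref{lem:approximate_lower_envelopes eq}, the restriction $r<\arl{K}{\phi}(a,\eta)\le\ar{K}{\phi}(a,\eta)$ is in force, so there the step is sound. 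Apart from this one point the proposal coincides with the paper's argument.
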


\begin{proof}
    It follows from the definition of $ \arl{K}{\phi} $ that (a) implies~(b) and
    it is obvious that (c) implies~(a).  It remains to prove that (b)~implies
    (c) and the equality in \eqref{lem:approximate_lower_envelopes eq}. Let $ 0
    < r < \arl{K}{\phi}(a, \eta) $ and choose $ \sigma > 1 $ and $ 0 < s <
    \ar{K}{\phi}(a, \eta) $ such that
    \begin{equation*}
        r < \sigma s < \arl{K}{\phi}(a, \eta)
        \quad \textrm{and} \quad
        \density^{\adim}(\Leb{\adim} \restrict K_{\sigma},a + s \eta) = 1.
    \end{equation*}
    It follows that $ \arhol{K}{\phi}(a + s \eta) \geq \sigma > 1 $ and employing Lemma \ref{lem: density points} we conclude that 
    \begin{equation*}
        t \arhol{K}{\phi}\big(a +t s\eta\big) = \arhol{K}{\phi}\big(a + s \eta) \qquad \textrm{for $ 0 < t < \sigma $.}
    \end{equation*}
    Since $ \frac{r}{s} < \sigma  $, choosing $ t = \frac{r}{s} $ we obtain  
    \begin{equation}\label{lem:approximate_lower_envelopes eq 1}
        \sigma \leq \arhol{K}{\phi}(a + s\eta) = \frac{r}{s}\arhol{K}{\phi}(a + r\eta)
        \quad \text{and} \quad
        \arhol{K}{\phi}(a + r\eta) \geq \frac{\sigma s}{r} > 1 \,.
    \end{equation}
   Letting $\sigma s $ approach $\arl{K}{\phi}(a, \eta)$, we infer
      from~\eqref{lem:approximate_lower_envelopes eq 1} that $ \arl{K}{\phi}(a,
      \eta) \leq r \arhol{K}{\phi}(a +r \eta) $ for $ 0 < r < \arl{K}{\phi}(a,
      \eta) $.
    Finally if $ 0 < r < \arl{K}{\phi}(a, \eta) $, then $ \arhol{K}{\phi}(a+r\eta)
    > 1 $,
    \begin{equation*}
        \density^{\adim}(\Leb{\adim} \restrict K_{\sigma},a + r \eta) = 1 \,,
        \quad \textrm{and} \quad
        \sigma r < \arl{K}{\phi}(a, \eta)
        \quad \text{for every $ 1 < \sigma < \arhol{K}{\phi}(a + r\eta) $} \,.
    \end{equation*}
    It follows that $ r\arhol{K}{\phi}(a + r\eta)\leq \arl{K}{\phi}(a, \eta) $ and the proof is complete.
\end{proof}

Lemma \ref{lem:approximate_lower_envelopes} implies the Borel measurability of
the reach function $ \arl{K}{\phi} $.
\begin{Lemma}
    For every closed set $ K \subseteq \mathbf{R}^n $ the function $ \arl{K}{\phi} $ is a Borel function.
\end{Lemma}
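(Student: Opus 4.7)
The plan is to write $\arl{K}{\phi}$ as a countable supremum of Borel functions on~$N^\phi(K)$. Specifically, I would first establish the identity
\begin{equation*}
    \arl{K}{\phi}(a,\eta) = \sup \Bigl( \{0\} \cup \Bigl\{ r \arhol{K}{\phi}(a+r\eta) : r \in \Q \cap (0, \infty) \,,\, r < \ar{K}{\phi}(a,\eta) \,,\, \arhol{K}{\phi}(a+r\eta) > 1 \Bigr\} \Bigr)
\end{equation*}
for every $(a,\eta) \in N^\phi(K)$, and then verify that each function appearing inside the supremum is Borel.

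For the ``$\geq$'' inequality in the above identity, suppose $0 < r < \ar{K}{\phi}(a,\eta)$ and $\arhol{K}{\phi}(a+r\eta) > 1$. For every $1 < \sigma < \arhol{K}{\phi}(a+r\eta)$, Remark~\ref{remark approximate lower limit} gives $\density^{\adim}(\Leb{\adim} \restrict K_\sigma, a+r\eta) = 1$; hence, $\sigma r \leq \arl{K}{\phi}(a,\eta)$ by the definition~\eqref{eq: arl}, and letting $\sigma \uparrow \arhol{K}{\phi}(a+r\eta)$ yields $r\arhol{K}{\phi}(a+r\eta) \leq \arl{K}{\phi}(a,\eta)$. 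For the ``$\leq$'' inequality, if $\arl{K}{\phi}(a,\eta) = 0$, then Lemma~\ref{lem:approximate_lower_envelopes} forces $\arhol{K}{\phi}(a+r\eta) = 1$ for all admissible~$r$, so the set on the right-hand side reduces to~$\{0\}$. If $\arl{K}{\phi}(a,\eta) > 0$, then Lemma~\ref{lem:approximate_lower_envelopes} yields $\arhol{K}{\phi}(a+r\eta) > 1$ and $r \arhol{K}{\phi}(a+r\eta) = \arl{K}{\phi}(a,\eta)$ for every $0 < r < \arl{K}{\phi}(a,\eta)$; any rational $r$ in this open interval attains the supremum.

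Once the identity is in place, the Borel measurability assembles mechanically. The map $(a,\eta,r) \mapsto a + r\eta$ is continuous, $\arhol{K}{\phi}$ is Borel by Lemma~\ref{lem: density points}, and $\ar{K}{\phi}$ is Borel since it is upper semicontinuous by Lemma~\ref{rem:rKphi-usc}. Consequently, for every $r \in \Q \cap (0,\infty)$, the set
\begin{equation*}
    A_r = \bigl\{ (a,\eta) \in N^\phi(K) : r < \ar{K}{\phi}(a,\eta) \,,\, \arhol{K}{\phi}(a+r\eta) > 1 \bigr\}
\end{equation*}
is Borel in~$N^\phi(K)$ and the function $(a,\eta) \mapsto r \arhol{K}{\phi}(a+r\eta) \mathbf{1}_{A_r}(a,\eta)$ is Borel; the supremum over the countable set $\Q \cap (0,\infty)$ therefore yields $\arl{K}{\phi}$ as a Borel function. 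No step presents a serious obstacle here: the main (minor) subtlety is to ensure that contributions from $r \geq \ar{K}{\phi}(a,\eta)$ are excluded, because beyond the cut locus $\arhol{K}{\phi}$ need not reflect the ray structure emanating from $(a,\eta)$; this is precisely what the condition $r < \ar{K}{\phi}(a,\eta)$ built into $A_r$ enforces.
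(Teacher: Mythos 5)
Your proof is correct and follows essentially the same route as the paper: both arguments reduce the measurability of $\arl{K}{\phi}$ to the identity $\arl{K}{\phi}(a,\eta) = r\,\arhol{K}{\phi}(a+r\eta)$ from Lemma~\ref{lem:approximate_lower_envelopes} together with the Borel measurability of $\arhol{K}{\phi}$, the paper doing so via superlevel sets $\{\arl{K}{\phi}\geq\lambda\}$ written as countable intersections over a sequence $r_i\downarrow 0$, and you via a countable supremum over rational $r$. The two packagings are interchangeable.
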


\begin{proof}
    Let $ \lambda > 0 $ and choose a sequence $r_i \downarrow 0 $ such that $ r_i
    < \lambda $ for every $ i \geq 1 $. It follows from Lemma
    \ref{lem:approximate_lower_envelopes} that
    \begin{equation*}
        \big\{ \arl{K}{\phi} \geq \lambda \big\}
        = {\textstyle \bigcap_{i=1}^\infty} N^\phi(K) \cap
        \big\{
        (a, \eta): r_i\arhol{K}{\phi}(a + r_i \eta) \geq \lambda
        \big\} \,,
    \end{equation*}
    whence we infer from Lemma \ref{lem: density points} that $ \{\arl{K}{\phi}
    \geq \lambda \} $ is a Borel subset of $ N^\phi(K) $.
\end{proof}

\begin{Lemma}
    \label{remark:arl_positive_reach}
    Suppose $ K \subseteq \mathbf{R}^n $ is a closed set, $ \rho > 0 $, $ U =\{
    x : \da{K}{\phi}(x) < \rho \} $, and $ U \cap \Sigma^{\phi}(K) = \varnothing
    $.
    
    Then $\arl{K}{\phi}(a, \eta) \geq \rho $ for every $ (a, \eta)\in N^\phi(K)
    $. In particular, if $ K $ is convex then $ \arl{K}{\phi}(a, \eta) = +
    \infty $ for every $(a, \eta) \in N^\phi(K) $.
\end{Lemma}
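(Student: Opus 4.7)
The plan is to first show that the hypothesis forces $\ar{K}{\phi}(a,\eta)\ge \rho$ for every $(a,\eta)\in N^{\phi}(K)$, and then boost this pointwise reach estimate to the lower-reach~$\arl{K}{\phi}$ using the openness of~$U$ and the continuity of~$\da{K}{\phi}$.

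\textbf{Step 1.} Suppose, for contradiction, that $\ar{K}{\phi}(a,\eta)=r^{*}<\rho$ for some $(a,\eta)\in N^{\phi}(K)$, and set $x^{*}=a+r^{*}\eta$. Then $\da{K}{\phi}(x^{*})=r^{*}<\rho$, so $x^{*}\in U$. By the definition of $\ar{K}{\phi}$ together with Lemma~\ref{rem:rho-usc} we have $\arho{K}{\phi}(x^{*})=1$, hence $x^{*}\in\Cut^{\phi}(K)$. By the inclusion $\Cut^{\phi}(K)\subseteq\Clos\Sigma^{\phi}(K)$ recorded in Remark~\ref{remark:inclusions}, either $x^{*}\in\Sigma^{\phi}(K)\cap U$ (impossible by assumption), or there is a sequence $x_{i}\to x^{*}$ with $x_{i}\in\Sigma^{\phi}(K)$; since $U$ is open, $x_{i}\in U$ eventually, contradicting $U\cap\Sigma^{\phi}(K)=\varnothing$. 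Thus $\ar{K}{\phi}(a,\eta)\ge\rho$ for all $(a,\eta)\in N^{\phi}(K)$.

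\textbf{Step 2.} Fix $(a,\eta)\in N^{\phi}(K)$ and let $0<s<\rho$. Using Lemma~\ref{rem:rKphi-usc} and the characterisation of $\Unp^{\phi}(K)$ on~$U$, for every $y\in U\without K$ we have
\begin{equation*}
    \arho{K}{\phi}(y)
    = \frac{\ar{K}{\phi}(\anp{K}{\phi}(y),\an{K}{\phi}(y))}{\da{K}{\phi}(y)}
    \ge \frac{\rho}{\da{K}{\phi}(y)} \,.
\end{equation*}
Pick $\sigma$ with $1<\sigma<\rho/s$ and choose $\delta>0$ so small that $\oballF{\phi}{a+s\eta}{\delta}\subseteq U$ and $\da{K}{\phi}(y)<\rho/\sigma$ for $y\in\oballF{\phi}{a+s\eta}{\delta}$ (possible since $\da{K}{\phi}$ is continuous and $\da{K}{\phi}(a+s\eta)=s<\rho/\sigma$). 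Then for every $y\in\oballF{\phi}{a+s\eta}{\delta}\without K$,
\begin{equation*}
    \arho{K}{\phi}(y)\ \ge\ \frac{\rho}{\da{K}{\phi}(y)}\ >\ \sigma \,,
\end{equation*}
so $\oballF{\phi}{a+s\eta}{\delta}\without K\subseteq K_{\sigma}$. Since $a+s\eta\notin K$ (because $s>0$), this open neighbourhood has full Lebesgue density at~$a+s\eta$, i.e., $\density^{\adim}(\Leb{\adim}\restrict K_{\sigma},a+s\eta)=1$.

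\textbf{Step 3.} By the definition~\eqref{eq: arl} of~$\arl{K}{\phi}$, and noting $s<\rho\le\ar{K}{\phi}(a,\eta)$ from Step~1, we obtain $\arl{K}{\phi}(a,\eta)\ge\sigma s$. Letting $\sigma\uparrow\rho/s$ gives $\arl{K}{\phi}(a,\eta)\ge\rho$. For the convex case, it is classical (and follows from the comments in the introduction, since convexity implies $\anp{K}{\phi}$ is single-valued everywhere, so $\Sigma^{\phi}(K)=\varnothing$) that the hypothesis is verified for every $\rho>0$; hence $\arl{K}{\phi}(a,\eta)=+\infty$.

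The argument is essentially mechanical given the results already assembled, so no step is particularly difficult; the only subtlety is recognising in Step~1 that the openness of~$U$ lets us upgrade ``$x^{*}\in\Clos\Sigma^{\phi}(K)$'' to the desired contradiction.
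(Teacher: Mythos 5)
Your proof is correct and follows essentially the same route as the paper's: both arguments first use Remark~\ref{remark:inclusions} to conclude that $\Cut^{\phi}(K)$ cannot meet the open set~$U$, hence $\ar{K}{\phi}(a,\eta)\ge\rho$ on $N^{\phi}(K)$, and then invoke Lemma~\ref{rem:rKphi-usc} to get $\arho{K}{\phi}(y)\ge\rho/\da{K}{\phi}(y)$ for $y\in U$. The only (minor) divergence is in the final bookkeeping: the paper passes to the approximate lower envelope $\arhol{K}{\phi}$ and concludes via Lemma~\ref{lem:approximate_lower_envelopes}, whereas you verify the density condition in~\eqref{eq: arl} directly by exhibiting a full $\phi$-ball contained in $K_{\sigma}$ and letting $\sigma\uparrow\rho/s$ — an equivalent and slightly more self-contained finish.
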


\begin{proof}
    First we notice that $ \Clos{\Cut^\phi(K)} \cap U = \varnothing $ by Remark
    \ref{remark:inclusions}. It follows that
    \begin{equation*}
        \ar{K}{\phi}(a, \eta) \geq \da{K}{\phi}(a + \ar{K}{\phi}(a, \eta) \eta ) \geq \rho \qquad \textrm{for every $(a, \eta)\in N^\phi(K) $} 
    \end{equation*}
    and $ N^\phi(K) = \{(\anp{K}{\phi}(x), \an{K}{\phi}(x)) : x \in U\} $.
    Using Lemma \ref{rem:rKphi-usc} we obtain that $ \da{K}{\phi}(x)
    \arho{K}{\phi}(x) \geq \rho $ for every $ x \in U $ and we infer that
    \begin{equation*}
        \arhol{K}{\phi}(x) = \ap \liminf_{y \to x} \arho{K}{\phi}(y) \geq \ap \liminf_{y \to x} \frac{\rho}{\da{K}{\phi}(y)} = \frac{\rho}{\da{K}{\phi}(x)} > 1
    \end{equation*}
    for every $  x\in U $ and the conclusion follows. 
\end{proof}

\begin{Remark}
    \label{example:Q-Cut_not_empty}
    Let $(e_1,e_2)$ be the standard
    basis of~$\R^{2}$ and set $ x_k = 2^{-k} $ for $k \in \natp$. Then 
    \begin{equation*}
        \frac{x_k - x_{k+1}}{x_k} = \frac 12 = \frac{x_{k+1}}{x_k}
        \quad \text{for $ k \in \natp $} \,.
    \end{equation*}
    Define
    \begin{equation*}
        K = \{ x_k e_1 : k \in \natp  \} \cup \{0\} \subseteq \R^2 \,.
    \end{equation*}
    
    Let $ a > 0 $. Evidently $ \arho{K}{}(ae_2) = \infty $ and
    $ (0, e_2) \in N(K) $. We shall prove that $ \arhol{K}{}(ae_2) < \infty $
    and infer that 
    \begin{equation*}
        \ar{K}{}(0, e_2) = \infty > \arl{K}{}(0, e_2) \,. 
    \end{equation*}
    Define $ c_k = \frac{x_k + x_{k+1}}{2} $ to be the centre of the segment
    in~$\R$ joining $ x_{k+1} $ and $ x_k $. Moreover, let $ Q_k $ be the
    $2$-dimensional square with centre in $ c_k e_1 + a e_2 $ and side-length
    $ \frac{x_k - x_{k+1}}{2} = 2^{-k-2} $ and let $ T_k $ be the
    $2$-dimensional triangle with vertices in $ x_{k+1}e_1 $, $ x_k e_1 $, and
    $ c_k e_1 + 2a e_2 $. Clearly, there exists $k_0 \in \natp$ depending only
    on~$a$ such that $ Q_k \subseteq T_k $ for $k \in \natp$ with $k \ge
    k_0$. Noting that
    \begin{equation*}
        \{c_k e_1 : k \geq 1   \} \times \R = \Sigma(K) \,,
    \end{equation*}
    one infers that there is $k_1 \ge k_0$ depending only on~$a$ such that for
    $ k \in \natp$ with $k \ge k_1$ there holds
    \begin{gather}
        \da{K}{}(z) \arho{K}{}(z) \le |c_ke_1 + 2ae_2 - x_ke_1| \le 3a
        \quad \text{for $k \ge k_1$ and $z \in T_k $} \,,
        \\
        \da{K}{}(z) \ge a - \frac{x_k-x_{k+1}}{4} = a - 2^{-k-3}
        \ge \tfrac 12 a
        \quad \text{for $k \ge k_1$ and $z \in Q_k$} \,.
    \end{gather}
    Consequently,
    \begin{displaymath}
        \arho{K}{}(z) \le 6 
        \quad \text{for $k \ge k_1$ and $z \in Q_k$} \,.
    \end{displaymath}
    Now, for $k \ge k_1$ we get
    \begin{multline}
        x_k^{-2}\Leb{2}
        \bigl( \R^2 \cap
        \bigl\{
        z  : |z \bullet e_1| \leq x_k, \; |(z \bullet e_2) - a| \leq  x_k, \; \arho{K}{}(z) \leq 6
        \bigr\}
        \bigr)
        \\
        \geq x_k^{-2}\sum_{h \geq k} \Leb{2}(Q_h)
        = 2^{2k} \sum_{h \geq k} 2^{-2h-4}
        = \frac{1}{12} \,,
    \end{multline} 
    and conclude that
    $ \density^{\ast 2}(\Leb{2} \restrict \{ z : \arho{K}{}(z) \leq 6 \}, ae_2) >
    0 $; hence, $\arhol{K}{}(ae_2) \leq 6$.
    
    Finally, we prove that $ \anp{K}{\phi} $ is not differentiable at
    $ a e_2 $ for each $ a > 0 $. Assume that there exists
    $L = \uD \anp{K}{} \in \Hom(\R^2,\R^2)$. Since $ \anp{K}{}(ae_2) = 0 $ and
    $ \anp{K}{}(ae_2 + x_ke_1) = x_k e_1 $ for $k \in \natp$ it must be
    $L e_1 = e_1$. However,
    $ \anp{K}{}(ae_2 + c_ke_1) = \{x_k e_1, x_{k+1}e_1\} $ for $ k \in \natp$
    and
    \begin{displaymath}
        \frac{| x_k e_1 - \anp{K}{}(ae_2) - L(c_ke_1) |}{c_k}
        = \frac{x_k - c_k}{c_k} = 2^{-k-2} \cdot \tfrac 23 2^{k+1} = \tfrac 13 > 0 \,.
    \end{displaymath}
\end{Remark}

\begin{proof}[Proof of Theorem \ref{intro:twice_diff_points}]
    Notice that the statement in \ref{intro:twice_diff_points:1} is proved in
    Lemma \ref{lem:Cut-subs-Sigma}. The statement in
    \ref{intro:twice_diff_points:propagation} follows combining Lemma
    \ref{lem:approximate_lower_envelopes} with Lemma
    \ref{lem:existence_of_real_eigenvalues}(f).

    Define $\R_{+} = \{ t : 0 < t < \infty \}$ and for $ t \in \R_{+} $ the
    function $f_t : \R^{\adim} \times \R^{\adim} \to \R^{\adim}$ by the
    formula $f_t(a, \eta) = a + t \eta$ whenever $a,\eta \in \R^{\adim}$. Next,
    define $ g: \mathbf{R}^n \sim (K \cup \Cut^\phi(K)) \to N^\phi(K) $ by
    $ g(x)= (\anp{K}{\phi}(x), \an{K}{\phi}(x)) $ for $ x \in
    \mathbf{R}^n \sim (K \cup \Cut^\phi(K))$. Corollary~\ref{cor:xi-lip-cont} yields
    \begin{equation}
        \label{eq:g-Lip-on-K}
        g |K_{\lambda} \cap S^{\phi}(K,t) \text{ is Lipschitz continuous for each $t > 0$ and $\lambda > 1$} \,.
    \end{equation}
    Moreover, for every $ t \in \R_{+} $ it follows from Lemma \ref{rem:rKphi-usc}
    \begin{displaymath}
        N^\phi(K) \cap \{(a, \eta) : t < \ar{K}{\phi}(a, \eta) \}
        = {\textstyle \bigcup_{\lambda > 1} } g \lIm K_{\lambda} \cap S^{\phi}(K,t) \rIm \,.
    \end{displaymath}
      
    We turn to the proof of~\ref{intro:twice_diff_points:2}. Set $ B =
    (\R^{\adim} \without K) \cap \{ x : \arhol{K}{\phi}(x) < \arho{K}{\phi}(x)
    \} $. Since $ \arho{K}{\phi} $ is a~Borel function by~\ref{rem:rho-usc} and
    $\arhol{K}{\phi}$ is $\Leb{\adim}$~measurable by Lemma~\ref{lem: density points} we
    see that $B$ is $\Leb{\adim}$~measurable and it follows from
    \cite[2.9.13]{Federer1969} that $\Leb{\adim}(B) = 0$; hence, the~coarea
    formula~\cite[3.2.11]{Federer1969} yields a~set $J \subseteq \R_{+}$ such
    that $\Leb{1}(\R_{+} \without J) = 0$ and
    \begin{displaymath}
        \Haus{\adim-1}(B \cap S^{\phi}(K,t)) = 0
        \qquad \textrm{for $ t \in J $} \,.
    \end{displaymath}
    For $t \in \R_{+}$ define   
    \begin{displaymath}
        W = N^\phi(K) \cap \{ (a, \eta) : \arl{K}\phi(a, \eta) < \ar{K}{\phi}(a,
        \eta) \} 
        \quad \text{and} \quad
        W_{t} = W \cap \{ (a, \eta): t < \ar{K}{\phi}(a, \eta) \} \,.
    \end{displaymath}
    Since $ f_t\lIm W_t \rIm \subseteq B \cap S^{\phi}(K,t) $ for $ t \in \R_{+} $
    by~Lemma \ref{lem:approximate_lower_envelopes}, it follows that
    \begin{equation}
        \label{intro:twice_diff_points_2 eq2}
        \Haus{\adim-1}(f_t \lIm W_t \rIm) = 0
        \qquad \textrm{for $t \in J$} \,.
    \end{equation}
    Since
    $ g \lIm K_{\lambda} \cap f_t\lIm W_t \rIm \rIm = g \lIm K_{\lambda} \cap S^{\phi}(K,t)
    \rIm \cap W_t $ for $ \lambda > 1 $ and $ t \in \R_{+} $, we~conclude
    from~\eqref{intro:twice_diff_points_2 eq2} and~\eqref{eq:g-Lip-on-K} that
    \begin{displaymath}
        \Haus{\adim-1}(g \lIm K_{\lambda} \cap S^{\phi}(K,t) \rIm \cap W_t) = 0
        \qquad
        \textrm{for each $ \lambda > 1 $ and $t \in J$} \,.
    \end{displaymath}
    Since
    $W_t = \bigcup \{ W_t \cap g \lIm K_{\lambda} \cap S^{\phi}(K,t) \rIm : 1 < \lambda
    \in \Q\}$, it follows that $ \Haus{\adim-1}(W_t) =0 $ for $ \Leb{1} $ almost all
    $ t \in \R_{+} $. Noting that $ W = \bigcup_{t > 0}W_t $ and $ W_t \subseteq W_s $
    for $ s \leq t $, we infer $\Haus{\adim-1}(W) = 0$.

    We finally prove \ref{intro:twice_diff_points:3}. Define
    \begin{displaymath}
        Z = N^{\phi}(K) \cap \bigl\{ (a,\eta) :
        \text{$ a+s\eta \in \Sigma^{\phi}_2(K)$ for all $ 0 < s < \arl{K}{\phi}(a,\eta) $}
        \bigr\} \,.
    \end{displaymath}
    If $ (a, \eta) \in N^\phi(K) \without Z $, then there exists $ 0 < s <
    \arl{K}\phi(a, \eta) $ such that $ \da{K}{\phi} $ is pointwise twice
    differentiable at $ a + s \eta $. Since $\arhol{K}{\phi}(a + s \eta)> 1$ and
    $ \arhol{K}{\phi}(a + s \eta) = s^{-1} \arl{K}\phi(a, \eta) > 1 $ by Lemma
    \ref{lem:approximate_lower_envelopes}, we~infer
    from~\ref{lem:existence_of_real_eigenvalues}\ref{existence_of_real_eigenvalues:4}
    that $ \da{K}{\phi} $ is pointwise twice differentiable at $ a + t \eta $
    for all $ 0 < t < \arl{K}\phi(a, \eta) $.

    Consequently, it remains to show that $ \Haus{\adim-1}(Z) = 0$. This can be
    done with an argument similar as in the proof
    of~\ref{intro:twice_diff_points:2}. Since $ \Leb{\adim}(\Sigma^{\phi}_2(K))
    =0 $ it follows from coarea formula~\cite[3.2.11]{Federer1969} that there is
    $I \subseteq \R_{+}$ such that $\Leb{1}(\R_{+} \without I) = 0$ and
    \begin{equation}
        \Haus{\adim-1}(\Sigma^{\phi}_2(K) \cap S^{\phi}(K,t)) = 0
        \qquad \textrm{for $ t \in I $} \,.
    \end{equation}
    We define $Z_{t} = Z \cap \{ (a, \eta): t < \arl{K}{\phi}(a, \eta) \} $ for
    $ t \in \R_{+} $. Noting that
    $f_t \lIm Z_t \rIm \subseteq \Sigma^{\phi}_2(K) \cap S^{\phi}(K,t)$ for $ t > 0 $,
    we~infer that
    \begin{equation}
        \Haus{\adim-1}(f_t \lIm Z_t \rIm) =0
        \qquad \textrm{for $ t \in I $} \,.
    \end{equation}
    Since
    $ g\lIm K_{\lambda} \cap f_t \lIm Z_t \rIm \rIm = g \lIm K_{\lambda} \cap S^{\phi}(K,t)
    \rIm \cap Z_t $ for $ \lambda > 1 $ and $ t > 0 $, we conclude that
    \begin{displaymath}
        \Haus{\adim-1}(g \lIm K_{\lambda} \cap S^{\phi}(K,t) \rIm \cap Z_t) =0
        \qquad
        \textrm{for $ \lambda > 1 $ and $ t \in I $} \,.
    \end{displaymath}
    It follows that $ \Haus{\adim-1}(Z_t) =0 $ for $ \mathcal{L}^1 $ almost all
    $ t > 0 $. Noting that $ Z = \bigcup_{t > 0}Z_t $ and $ Z_t \subseteq Z_s $
    for $ s \leq t $, we see that $ \Haus{\adim-1}(Z) =0 $.
\end{proof}

\begin{Remark}\label{remark : residualr set}
Recalling the set $ B $ from the proof above and noting  that
\begin{displaymath}
 \{ a + r \eta: (a, \eta)\in N^\phi(K) , \;
    \arl{K}\phi(a, \eta)\leq r < \ar{K}{\phi}(a, \eta) \}
    \subseteq B, 
\end{displaymath}
we conclude that $\mathscr{L}^n(\{a + r\eta : \arl{K}\phi(a, \eta) \leq r <\ar{K}{\phi}(a,
\eta)\}) = 0$. 
\end{Remark}

The following theorem shows that even in the case of convex bodies with $ \mathcal{C}^{1,1} $-boundaries, the dimension of the set $ Z $ in the statement \ref{intro:twice_diff_points:3} can be precisely $ n-1 $. See also \cite[Theorem 1]{MR699027} for a similar construction.

\begin{Theorem}
    \label{convex_example} There exists a convex set $ K \subseteq \R^{2} $ with
    $ \cnt{1,1} $ boundary and a dense set $ S \subseteq \Bdry{K} $ such that
    \begin{enumerate}
    \item
        \label{convex_example:1}
        $ \Haus{1}(S) =0 $ and $ \Haus{s}(S) = + \infty $ for all $ s < 1 $,
    \item
        \label{convex_example:1'}
        $ \Haus{1}(N(K)|S) =0 $ and $ \Haus{s}(N(K)|S) = + \infty $ for all
        $ s < 1 $,
    \item
        \label{convex_example:2}
        $ \anp{K}{}|S^{\phi}(K,r) $ is not differentiable at $ a + r u $ for all
        $ r > 0 $ and $ (a,u) \in N(K)|S $.
    \end{enumerate}
    In particular, both $ \Sigma_2(K) $ and the set of points where $ \da{K}{} $ is
    not directionally differentiable are dense in $ \R^{2} \without K $ with
    Hausdorff dimension $ 2 $.
\end{Theorem}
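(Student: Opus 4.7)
The plan is to realise $K$ as a convex body with $\cnt{1,1}$ boundary, a portion of which agrees with the graph of a carefully chosen convex $\cnt{1,1}$ function $f : \R \to \R$, and to set $S = \{(t, f(t)) : t \in E\}$, where $E \subseteq \R$ will be dense with $\Haus{1}(E) = 0$ and $\Haus{s}(E) = +\infty$ for every $s \in (0,1)$. To build $E$, enumerate a dense sequence $(q_k)_{k \in \natp}$ in $\R$ and place in the $2^{-k}$-neighbourhood of each $q_k$ a Cantor-type compact set $C_k \subseteq \R$ of Hausdorff dimension $s_k$, with $s_k \nearrow 1$ and $0 < \Haus{s_k}(C_k) < \infty$. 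Then $E := \bigcup_k C_k$ is dense in $\R$, has $\Haus{1}$-measure zero by countable subadditivity, and for any $s < 1$ one picks $k$ with $s_k > s$ to obtain $\Haus{s}(E) \geq \Haus{s}(C_k) = +\infty$.

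The function $f$ is then produced as $f(x) := \tfrac12 x^2 + \int_0^x g(t)\,dt$, where $g : \R \to [0,1]$ is a bounded Lipschitz non-decreasing function that fails to admit a classical derivative at every point of $E$. To construct $g$, one uses a multi-scale superposition $g = \sum_k \varepsilon_k g_k$ with $\varepsilon_k \leq 2^{-k}$, where each auxiliary $g_k$ is a Lipschitz-regularised devil's-staircase-type bump adapted to $C_k$, designed so that at every point of $C_k$ the difference quotients of $g_k$ oscillate between two distinct accumulation values. The resulting $f$ is strictly convex of class $\cnt{1,1}$, its derivative is $f'(x) = x + g(x)$, and by Lemma~\ref{Gariepy-Pepe} it fails to be pointwise twice differentiable at precisely the points of~$E$. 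The body $K$ is obtained by embedding the graph of $f|_{[-1,1]}$ as one boundary arc of a convex body and smoothly closing it off with $\cnt{1,1}$ arcs whose tangent directions match those of the graph at the endpoints.

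For \ref{convex_example:1} and \ref{convex_example:1'} observe that the maps $t \mapsto (t, f(t))$ and $t \mapsto \bigl((t, f(t)), \nu(t)\bigr)$ with $\nu(t) = (-f'(t),1)/\sqrt{1+f'(t)^2}$ are bi-Lipschitz on $[-1,1]$ since $f$ and $f'$ are Lipschitz, so the Hausdorff-measure properties of $E$ transfer verbatim to $S$ and to $N(K)|S$. For \ref{convex_example:2}, Lemma~\ref{basic_properties_of_da_and_anp}\ref{basic_properties_of_da_and_anp_6} gives that $\anp{K}{}$ is differentiable at $a + ru$ if and only if $\da{K}{}$ is pointwise twice differentiable there, and Lemma~\ref{Gariepy-Pepe} in turn reduces this to pointwise twice differentiability of $f$ at the base point, which fails precisely on~$E$. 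For the concluding ``in particular'' statement, the map $\Phi(t, r) := \gamma(t) + r\nu(t)$ with $\gamma(t) = (t, f(t))$ is bi-Lipschitz on $[-1,1] \times [\rho, R]$ for any $0 < \rho < R$ (its Jacobian is bounded below by $1$ thanks to convexity), whence $\Phi(E \times (0,\infty)) \subseteq \Sigma_2(K)$ is dense in $\R^{2} \without K$ and has Hausdorff dimension at least $1 + s$ for every $s < 1$, giving Hausdorff dimension~$2$.

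The principal technical obstacle is the construction of~$g$: a single devil's staircase would fail Lipschitz continuity, so one needs a multi-scale design in which the amplitudes $\varepsilon_k$ decay fast enough to guarantee Lipschitz continuity of the sum while remaining large enough (relative to the natural scales of $C_k$) that the oscillation of difference quotients survives at every point of $E$, including accumulation points where contributions from several $C_j$ with $j \ne k$ might otherwise conspire to cancel the oscillation.
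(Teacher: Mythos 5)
Your overall strategy coincides with the paper's: take $K$ to be (essentially) the region above the graph of a $\cnt{1,1}$ convex function whose Lipschitz derivative fails to be differentiable exactly on a dense null set of full Hausdorff dimension, and transfer the measure-theoretic properties along bi-Lipschitz parametrisations. However, two steps are genuinely incomplete.

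First, the construction of the non-decreasing Lipschitz function $g$ that is non-differentiable at every point of $E$ is not actually carried out: you yourself flag the interference between the bumps $g_k$ at accumulation points of $\bigcup_j C_j$ as ``the principal technical obstacle'' and leave it unresolved. This is precisely the hard part, and it is not routine — naive superpositions of regularised devil's staircases can lose the oscillation of difference quotients at points lying in infinitely many of the supports. The paper avoids the issue entirely by invoking Zahorski's theorem (see \cite{MR22592}, \cite{MR2527127}): every $G_{\delta\sigma}$ null subset of $\R$ — in particular your $F_\sigma$ null set $E$, or a dense $G_\delta$ null set $G$ with $\Haus{s}(G)=+\infty$ for all $s<1$ — is the exact non-differentiability set of some Lipschitz function, which one may take non-decreasing after adding a linear term. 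Replacing your ad hoc construction by this citation closes the gap.

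Second, your argument for \ref{convex_example:2} proves the wrong (weaker) statement. Combining \ref{basic_properties_of_da_and_anp}\ref{basic_properties_of_da_and_anp_6} with a curvature comparison shows at best that the \emph{unrestricted} map $\anp{K}{}$ is not differentiable at $a+ru$; but \ref{convex_example:2} asserts non-differentiability of the \emph{restriction} $\anp{K}{}|S^{\phi}(K,r)$, which is strictly stronger (differentiability of a restriction does not follow from, but is implied by, differentiability of the full map). Moreover, Lemma~\ref{Gariepy-Pepe} controls the second-order behaviour of the graph function of the level set $S^{\phi}(K,r)$ at $a+ru$, not of $\Bdry K$ at the base point $a$, so the claimed ``reduction to pointwise twice differentiability of $f$ at the base point'' is not what that lemma provides. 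The paper's proof of \ref{convex_example:2} is direct: $\anp{K}{}|S^{\phi}(K,r)$ is a bi-Lipschitz homeomorphism onto $\Bdry K$ with inverse $b \mapsto b + r\eta(b)$, so if it were differentiable at $a+ru$ its derivative would be a linear isomorphism and the inverse would be differentiable at $a$, forcing differentiability of the Gauss map $\eta$ at $a$ — which fails exactly on $S$ because $\eta(x,g(x))$ is an explicit smooth function of $f(x)$. You should adopt this inverse-function argument; it also renders the closing-off of $K$ into a bounded body unnecessary, since the statement does not require $K$ bounded and the epigraph already has $\cnt{1,1}$ boundary.
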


\begin{proof}
    We choose a dense $ G_\delta $-set $ G \subseteq \R $ with
    $ \Leb{1}(G) = 0 $ and $ \Haus{s}(G) = + \infty $ for all $ s < 1 $. Then
    Zahorski theorem (see~\cite{MR22592} or~\cite{MR2527127}) ensures the
    existence of non-decreasing Lipschitz function $ f : \R \to \R $
    such that $ f $ is not differentiable at each point of $ G $ and it is
    differentiable at each point of $ \R \without G $. Let $ g $ be a primitive
    of $ f $ and notice that $ g $ is a $ \cnt{1,1} $ convex function. Let
    \begin{displaymath}
        K = (\R \times \R) \cap \{ (x, y) : g(x) \leq y \} \,,
        \qquad
        S = (G \times \R) \cap \{ (x, y) : g(x) = y \} \,.
    \end{displaymath}
    Evidently $ K $ is a convex set with $ \cnt{1,1} $ boundary,
    $ \Haus{1}(S)=0 $ and $ \Haus{s}(S) = + \infty $ for every $ s < 1 $.
    If~$\eta : \Bdry{K} \to \sphere{1} $ is the exterior unit normal
    of~$K$ then
    \begin{displaymath}
        \eta(x, f(x)) = \frac{1}{\sqrt{1+ f(x)^2}} \bigl( f(x), -1 \bigr) \qquad
        \textrm{for $ x \in \R $} \,,
    \end{displaymath}
    whence we infer that $ \eta $ is not differentiable at each point of $ S $
    and it is differentiable at all points of $\Bdry{K} \without S$.  Moreover,
    we notice that the map $ \phi : \Bdry{K} \to N(K) $, defined by
    $ \phi(z) = (z, \eta(z)) $ for $ x \in \Bdry{K} $, is a bilipschitz
    homeomorphism and $ N(K)|S = \phi(S) $. Therefore, \eqref{convex_example:1'}
    holds.
    
    To check \ref{convex_example:2} we assume that $ \anp{K}{}|S^{\phi}(K,r) $ is
    differentiable at $ a+ ru $ for some $ r > 0 $ and $ (a,u) \in N(K)|S $.
    We~notice that $ \anp{K}{}| S^{\phi}(K,r) $ is a bilipschitz homeomorphism onto
    $ \Bdry{K} $ with
    \begin{displaymath}
        (\anp{K}{}|S^{\phi}(K,r))^{-1}(b) = b + r \eta(b)
        \quad \text{for $ b \in \Bdry{K} $.}
    \end{displaymath}
    Therefore,
    $ \Der (\anp{K}{}|S^{\phi}(K,r))(a+ ru) : \Tan(S^{\phi}(K,r),a + ru) \to \Tan(\Bdry{K},a)
    $ is a~linear homeomorphism and $(\anp{K}{}|S^{\phi}(K,r))^{-1}$ is differentiable
    at $ a $. This contradicts the fact that $ \eta $ is not differentiable
    at~$a$.
\end{proof}

\subsection*{Declarations}
\paragraph{Funding.} The research of Sławomir Kolasiński was supported by the
National Science Centre Poland grant no.~2016/23/D/ST1/01084 (years 2017-2021).
\paragraph{Conflicts of interest/Competing interests.} On behalf of all authors,
the corresponding author states that there is no conflict of interest.
\paragraph{Availability of data and material.} Not applicable.
\paragraph{Code availability.} Not applicable.

\bigskip

{\small \noindent
  S{\l}awomir Kolasi{\'n}ski \\
  Instytut Matematyki, Uniwersytet Warszawski \\
  ul. Banacha 2, 02-097 Warszawa, Poland \\
  \texttt{s.kolasinski@mimuw.edu.pl}
}

\bigskip

{\small \noindent
  Mario Santilli \\
  Department of Information Engineering, Computer Science and Mathematics,\\
  Università degli Studi dell'Aquila\\
  via Vetoio 1, 67100 L’Aquila, Italy\\
  \texttt{mario.santilli@univaq.it}
}

\end{document}